\documentclass{amsart}
\usepackage{amsmath,amssymb}
\usepackage{graphicx,subfigure}

\usepackage{color}


\newtheorem{theorem}{Theorem}[section]

\newtheorem{corollary}[theorem]{Corollary}

\newtheorem{lemma}[theorem]{Lemma}

\theoremstyle{definition}

\newtheorem*{definition*}{Definition}

\theoremstyle{remark}
\newtheorem{remark}[theorem]{Remark}

\numberwithin{equation}{section}

\def\hf{\widehat f}

\newcommand{\cc}{\mathrm{c}}

\newcommand{\al}{\alpha}
\newcommand{\be}{\beta}
\newcommand{\de}{\delta}
\newcommand{\ep}{\varepsilon}

\newcommand{\ga}{\gamma}

\newcommand{\la}{\lambda}
\newcommand{\om}{\omega}
\newcommand{\si}{\sigma}
\newcommand{\te}{\theta}
\newcommand{\vp}{\varphi}

%
\newcommand{\De}{\Delta}
\newcommand{\Ga}{\Gamma}

\newcommand{\Si}{\Sigma}
\newcommand{\Om}{\Omega}

%

%

%
\newcommand{\tu}{\widetilde{u}}

\newcommand{\tX}{\widetilde{X}}
\newcommand{\tv}{\widetilde{v}}
\newcommand{\tw}{\widetilde{w}}

\newcommand{\tOm}{\widetilde{\Om}}
\newcommand{\tPhi}{\widetilde{\Phi}}

%

%
\def\hv{\widehat v}

\def\hw{\widehat w}

%
\def\CC{\mathbb{C}}

\def\RR{\mathbb{R}}

\def\ZZ{\mathbb{Z}}

\def\TT{\mathbb{T}}

\renewcommand\SS{\mathbb{S}}
%


\newcommand{\cI}{{\mathcal I}}

\newcommand{\cK}{{\mathcal K}}

\newcommand{\cM}{{\mathcal M}}

\newcommand{\cV}{{\mathcal V}}

%

%

%

\newcommand{\pa}{\partial}
\newcommand{\pd}{\partial}
\newcommand\minus\backslash
\newcommand{\id}{{\rm id}}

\newcommand\lan\langle
\newcommand\ran\rangle

%

\newcommand{\supp}{\operatorname{supp}}

\DeclareMathOperator\Real{Re}

\renewcommand\leq\leqslant
\renewcommand\geq\geqslant
%
\newlength{\intwidth}

%

%
\addtolength{\parskip}{3pt}

\newcommand\loc{_{\mathrm{loc}}}
\newcommand\LL{_{\mathrm{L}}}

 \DeclareMathOperator\curl{curl}

\newcommand{\Rn}{\mathbb{R}^n}
\newcommand{\RRt}{\mathbb{R}^{n+1}}

\begin{document}

\title[Approximation theorems for parabolic equations]{Approximation theorems for parabolic equations and movement of local hot spots}

\author{Alberto Enciso}
\address{Instituto de Ciencias Matem\'aticas, Consejo Superior de
  Investigaciones Cient\'\i ficas, 28049 Madrid, Spain}
\email{aenciso@icmat.es}

\author{M\textordfeminine \'Angeles Garc\'\i a-Ferrero}
\address{Max Planck Institute for Mathematics, 04103 Leipzig, Germany}
\email{maria.garcia@mis.mpg.de}

\author{Daniel Peralta-Salas}
\address{Instituto de Ciencias Matem\'aticas, Consejo Superior de
 Investigaciones Cient\'\i ficas, 28049 Madrid, Spain}
\email{dperalta@icmat.es}

%
%
\begin{abstract}
  We prove a global approximation theorem for a general parabolic
  operator~$L$, which asserts that if~$v$ satisfies the equation
  $Lv=0$ in a spacetime region~$\Om\subset\RR^{n+1}$ satisfying
  certain necessary topological condition, then it can be approximated
  in a H\"older norm by a global solution~$u$ to the
  equation. If~$\Om$ is compact and~$L$ is the usual heat operator,
  one can instead approximate the local solution~$v$ by the unique
  solution that falls off at infinity to the Cauchy problem with a
  suitably chosen smooth, compactly supported initial datum. These
  results are next applied to prove the existence of global solutions to
  the equation $Lu=0$ with a local hot spot that moves along a
  prescribed curve for all time, up to a uniformly small error. Global
  solutions that exhibit isothermic hypersurfaces of prescribed
  topologies for all times and applications to the heat equation on
  the flat torus are discussed too.
\end{abstract}
\maketitle

\section{Introduction}

The global approximation theory for elliptic equations, which started
with the works of Lax~\cite{Lax} and Malgrange~\cite{Malgrange} in the
1950s, attained its peak with the results of
Browder~\cite{Browder}. Roughly speaking, this theory asserts that
a function satisfying a linear elliptic equation on a closed subset~$\Om$
of~$\RR^n$ can be uniformly approximated by a global solution to the
same equation provided that~$\Om$ satisfies certain necessary
topological condition, namely that its complement in~$\RR^n$ does not
have any bounded connected components. The original motivation for
this theory comes from the classical theorems of Runge and Mergelyan on holomorphic
approximation in complex analysis, which were essentially extended to
harmonic functions and solutions to other elliptic equations with
constant coefficients e.g.\ in~\cite{Gauthier1,Verdera}.

The flexibility granted by global approximation theorems, which
permits to reproduce the behavior of a local solution by a global
solution up to a small error, has recently found applications in
a number of seemingly unrelated contexts such as the nodal sets of eigenfunctions of
Schr\"odinger operators~\cite{JEMS,RMI}, level sets of solutions to the
Laplace and Helmholtz equations~\cite{Adv,Canzani}, the Allen--Cahn
equation~\cite{APDE} and, thanks to an extension of the theory to the
Beltrami field equation (that is, $\curl u-u=0$ on~$\RR^3$), vortex structures in the 3D Euler
and Navier--Stokes equations~\cite{Annals,Acta,NS}. Key to several of
these results has been the development of global approximation
theorems with decay at infinity~\cite{Acta,APDE,JEMS} for the
Helmholtz equation $\De u+u=0$ and the Beltrami field equation.

In contrast, the only approximation theorems available for
parabolic equations are some basic results~\cite{Jones,Diaz1980,Gauthier2} ensuring uniform approximation
on compact sets for the heat equation
\[
-\frac{\pa u}{\pa t}+\De u=0\,.
\]
These results do not provide any control on the growth of the solutions
at infinity (neither in space nor in time) and have not yet found any applications.

Our goal in this paper is to prove flexible global approximation
theorems for general linear parabolic equations and present some
applications to the study of local hot spots and isothermic hypersurfaces. Although the results
and proofs remain valid on a general noncompact manifold with minor modifications, for concreteness we
will restrict our attention to Euclidean spaces and consider parabolic operators of the form
\begin{equation}\label{Lu}
Lu:=-\frac{\pa u}{\pa t}+\sum_{i,j=1}^n a_{ij}(x,t)\frac{\pa^2 u}{\pa x_i\pa x_j}
+\sum_{i=1}^n b_i(x,t)\frac{\pa u}{\pa x_i}+c(x,t)u\,,
\end{equation}
where the space variable~$x$ takes values in $\RR^n$ with~$n\geq2$ and the equation
is uniformly parabolic in the sense that
\[
\inf_{(x,t)\in \RRt} a_{ij}(x,t)\, \xi_i\, \xi_j\geq C_L|\xi|^2
\]
for all $\xi\in\RR^n$, with $C_L$ a uniform constant. The coefficients
of~$L$ and of its formal adjoint, which we denote by $L^*$ are
assumed to be in a suitable parabolic H\"older space $C^{r,\frac r2}(\RR^{n+1})$,
with $r>2$ a real that is not an integer. The definition of these spaces
and the precise regularity assumptions on the coefficients are given in
Section~\ref{s:aboutK}. The parabolic H\"older norm of a function
$f(x,t)$ on a spacetime domain $\Om\subset\RR^{n+1}$ will be denoted by
\[
\|f\|_{r,\Om}:=\|f\|_{C^{r,\frac r2}(\Om)}\,.
\]

\subsection*{Global approximation theorems for parabolic equations}

The first result that we present is a global approximation theorem
on closed sets for general second-order parabolic operators. In order
to state it, if $\Om\subset \RR^{n+1}$ is a set in spacetime, it is
convenient to denote by $\Om(t)$ its intersection
with the time~$t$ slice, that is,
\[
\Om(t):=\{x\in\RR^n: (x,t)\in\Om\}\,.
\]
Furthermore, the complement of a set~$S\subset\RRt$ will be denoted by
$S^\cc:=\RRt\backslash S$ and we will say that a PDE is satisfied in a
closed set $\Om\subset\RR^{n+1}$ if the equation holds in an open
neighborhood of~$\Om$. Let us recall that the topological hypothesis
on the set~$\Om$ that we require in this theorem is known to be
necessary for global approximation already in the case of the heat
equation~\cite{Jones}. One should also observe that, as we are
assuming that~$v$ satisfies a parabolic equation in a neighborhood of
the set~$\Om$, we are in particular making the regularity assumption
that $v$~is of class $C^{r,\frac r2}$ up to the boundary of~$\Om$.

\begin{theorem}\label{T1}
Let $\Om$ be a closed subset in $\RR^{n+1}$ such that
$\Om^\cc(t)$ does not have any bounded connected components for all
 $t\in\RR$. Let us consider a
function $v$ that satisfies the parabolic equation $Lv=0$ in~$\Om$.
Then for any $\de>0$ there exists a solution $u$ of
the equation $Lu=0$ in the whole
space $\RRt$ such that
\[
\|u-v\|_{r+2,\Om}<\de\,.
\]
\end{theorem}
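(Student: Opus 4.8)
The plan is to deduce Theorem~\ref{T1} from a functional-analytic duality argument in the spirit of the Lax--Malgrange--Browder scheme, adapted to the parabolic setting. First I would reduce to the case where $\Om$ is compact by exhausting $\RRt$ with an increasing sequence of compact sets $\Om_k$ adapted to $\Om$ (intersecting $\Om$ with large spacetime balls whose boundary meets $\Om$ nicely) and approximating on each $\Om_k$ with error $\de 2^{-k}$; one must check that the topological hypothesis "$\Om^\cc(t)$ has no bounded components" is inherited by the truncations, at least after a harmless modification of the exhausting sets. On a fixed compact $K$ containing $\Om$, the core claim is the following density statement: the space of (restrictions to $K$ of) global solutions of $Lu=0$ on $\RRt$ is dense, in the parabolic H\"older norm $\|\cdot\|_{r+2,K}$, in the space of functions on $K$ satisfying $Lv=0$ near $\Om$.

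To prove that density statement I would invoke a Hahn--Banach argument. Suppose a continuous linear functional on the relevant H\"older space vanishes on all global solutions; represent it by a compactly supported distribution (or measure-like object) $\mu$ on a neighborhood of $\Om$. The vanishing condition says that $\mu$ annihilates the range of $L$ acting on a suitable space of test functions on all of $\RRt$, which — via the adjoint operator $L^*$ — forces the (backwards) parabolic equation $L^*\psi = \mu$ to have a solution $\psi$ supported in the closure of $\Om^\cc$. Here the crucial point is a unique continuation / support-propagation property for the adjoint parabolic operator: because $L^*\psi$ vanishes outside $\supp\mu$ and $\Om^\cc(t)$ has no bounded components, $\psi$ must vanish in a neighborhood of $\Om$, whence $\mu$ itself annihilates $v$. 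The parabolic analogue of "no bounded components" is exactly what lets the support of $\psi$ be pushed off to spatial infinity along each time slice (the characteristic directions of the heat operator being purely spatial, one propagates the vanishing of $\psi$ slice by slice, reaching the unbounded component of $\Om^\cc(t)$ where $\psi$ is forced to be zero by decay). This slice-wise propagation, combined with backward uniqueness for $L^*$, is what I expect to be the technical heart of the argument.

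The main obstacle, and the part requiring the most care, is precisely this support/unique-continuation step for $L^*$ on the unbounded spacetime region $\Om^\cc$: one needs that a solution of the adjoint equation with compactly supported source, vanishing near the "outer" unbounded part of each time slice, is identically zero in a neighborhood of $\Om$. For the constant-coefficient heat operator this follows from classical results (e.g.\ via the analyticity in the space variable of caloric functions, as in~\cite{Jones}), but for a variable-coefficient operator $L$ with merely H\"older coefficients one must replace analyticity by a quantitative parabolic unique continuation principle (Carleman estimates) together with a time-slicing argument; the non-integer regularity $r>2$ and the requirement to close the estimates in $C^{r,r/2}$ rather than $L^2$ add bookkeeping but no conceptual difficulty. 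A secondary technical issue is the gain of two derivatives, i.e.\ that one approximates in $\|\cdot\|_{r+2,\Om}$ rather than $\|\cdot\|_{r,\Om}$: this comes from interior parabolic Schauder estimates, since both $u$ and $v$ solve $Lw=0$ near $\Om$, so their difference, small in a weak norm, is automatically small in the stronger norm on a slightly smaller set — one therefore sets up the duality with the weaker norm and upgrades at the end. Finally, assembling the compact pieces $\Om_k$ into a single global solution $u$ on $\RRt$ is a standard iteration: having built $u_k$ close to $u_{k-1}$ on $\Om_{k-1}$ and close to $v$ on $\Om_k$, one uses that a global solution close to zero on $\Om_{k-1}$ in a weak norm is, by Schauder, uniformly close in the strong norm, so the sequence $u_k$ converges locally uniformly (with all derivatives) to a global solution $u$ with $\|u-v\|_{r+2,\Om}<\de$.
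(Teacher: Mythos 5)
Your proposal identifies the right family of ideas (Lax--Malgrange--Browder duality, unique continuation for $L^*$, Schauder upgrade, exhaustion argument), but the central duality step is set up in a way that does not close, and the key claim buried inside it is essentially the theorem in disguise.

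The paper's actual argument does \emph{not} apply Hahn--Banach directly to the pairing ``compactly supported functional near $\Om$'' versus ``global solutions of $Lu=0$''. Instead it proceeds in two structurally different stages. First, the local solution $v$ is written as a potential $\int K(\cdot\,,\cdot\,;y,s)\,\vp(y,s)\,dy\,ds$ with $\vp=L(\chi v)$ supported in $\Om^\cc$, and this potential is discretized into a finite sum $\sum_j c_j K(\cdot\,,\cdot\,;y_j,s_j)$ with poles $(y_j,s_j)\in\Om^\cc$ (Lemma~\ref{lem:riemsum}). Second, each such fundamental solution is ``swept'' to infinity through $\Om^\cc$ (Lemmas~\ref{lem:sweep} and~\ref{lem:sweepinf}). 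The Hahn--Banach/Riesz--Markov duality enters only inside Lemma~\ref{lem:sweep}, and there the measure $\mu$ lives on the complement of a bounded set $\tilde U$, and annihilates the family $\{K(\cdot\,,\cdot\,;z,\tau):(z,\tau)\in\cK\}$ with poles in a prescribed small region $\cK\subset\tilde U$. Then $F:=K^**\mu$ satisfies $L^*F=0$ in $\tilde U$ and vanishes on $\cK$ \emph{by the very definition of $\mu$}, and the unique continuation theorem (Theorem~\ref{thm:ucp}) propagates this vanishing to $F(y,s)=0$. This is what makes the duality argument close: the annihilation of the test family gives the vanishing of $F$ on an open set for free.

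Your version attempts to shortcut these two stages into one top-level duality, but the logic does not carry through. You write that the functional, represented by a compactly supported distribution $\mu$ near $\Om$, ``annihilates the range of $L$''; what vanishing on global solutions gives you is annihilation of the \emph{kernel} of $L$, and the assertion that this ``forces $L^*\psi=\mu$ to have a solution $\psi$ supported in the closure of $\Om^\cc$'' is not justified. If you set $\psi:=K^**\mu$ you indeed get $L^*\psi=\mu$, but $\psi$ is certainly not supported in $\Om^\cc$ (it is a convolution against the adjoint heat kernel, hence nonzero essentially everywhere in the backward cone of $\supp\mu$), and there is no step that makes $\psi$ vanish on any open set. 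The information ``$\mu$ annihilates all global solutions'' cannot be directly applied to the fundamental solution $K(\cdot\,,\cdot\,;y,s)$ with $(y,s)\in\Om^\cc$ because that function is not a global solution (it has a Dirac source at $(y,s)$). In other words, the statement you would need --- that $\psi(y,s)=\int K(\cdot\,,\cdot\,;y,s)\,d\mu=0$ for all $(y,s)$ in the unbounded part of $\Om^\cc(t)$ --- \emph{is} the pole-sweeping lemma, and cannot be obtained ``for free'' from the hypothesis on $\mu$. This is a genuine gap, not a bookkeeping issue; the unique continuation you invoke has nothing to propagate unless you first establish the vanishing of $\psi$ somewhere, and that is precisely what the intermediate potential representation and discretization accomplish.

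Secondary points: the exhaustion of $\RRt$ is handled in the paper via a partition of unity $\{\vp_m\}$, iterated applications of the compact-set lemma (Lemma~\ref{lem:xtbdpoles}), and a telescoping estimate \eqref{errorM}; your ``$\de 2^{-k}$'' sketch is in the right spirit but glosses over the compatibility issue you yourself flag (the truncations must still have $\Om^\cc(t)$ with no bounded components, and the approximants from different stages must add up to a convergent series). Also, the paper does not invoke Carleman estimates directly; it cites the unique continuation principle as Theorem~\ref{thm:ucp}. What you got right: the role of $L^*$ and unique continuation, the use of interior parabolic Schauder estimates to upgrade from a weak norm to $\|\cdot\|_{r+2}$, and the need for the topological hypothesis on each time slice.
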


In many applications, a major drawback of all classical global
approximation theorems that is also shared by Theorem~\ref{T1} is that
it does not provide any bounds on the behavior of the global solution
at infinity. Our second theorem ensures that, in the case of the heat
equation, the global
solution~$u$ approximating the local solution~$v$ can be assumed to
decay both at spatial infinity and as $t\to\infty$.

To state our approximation theorem with decay, given a function $f\in
C^\infty_c(\RR^n)$, let us denote by $u:=e^{t\De}f$ the only solution to
the Cauchy problem for the heat equation
\[
\frac{\pd u}{\pd t}-\De u=0\quad\text{in }\RR^{n+1}_+\,,\qquad u(x,0)=f(x)
\]
such that $u(x,t)$ tends to zero as $|x|\to\infty$ for all $t>0$. Here
$\RR^{n+1}_+:=\RR^n\times (0,\infty)$. It
is well known that $u $ can be written in terms of the heat kernel
\[
K(x,t):=c_n t^{-n/2} e^{-|x|^2/4t}
\]
as
\begin{equation}\label{Ku}
u(x,t)=\int_{\RR^n} K(x-y,t)\, f(y)\, dy\,.
\end{equation}

\begin{theorem}\label{T2}
Let $\Om$ be a compact subset of $\RR^{n+1}_+$ such $\Om^\cc(t)$ is connected
for all $t\in\RR$. Given a
function~$v$ satisfying the heat equation $\pd_tv-\De v=0$ on~$\Om$, an integer~$l$ and $\de>0$,
there exists a function $u_0\in C^\infty_c(\RR^n)$ such that
$u:=e^{t\De} u_0$ approximates~$v$ as
\[
\|u-v\|_{C^l(\Om)}<\de\,.
\]
\end{theorem}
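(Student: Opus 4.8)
The plan is to combine the global approximation theorem just proved, Theorem~\ref{T1}, with a duality argument that trades the resulting global solution for one whose initial datum is smooth and compactly supported. Since $\Om$ is compact and $\Om\subset\RR^{n+1}_+$, it is contained in a slab $\RR^n\times[t_0,T]$ with $0<t_0\leq T<\infty$; moreover, because each slice $\Om(t)$ is a compact subset of~$\RR^n$, the hypothesis that $\Om^\cc(t)$ is connected for all~$t$ is the same as the hypothesis of Theorem~\ref{T1} that $\Om^\cc(t)$ has no bounded connected components. Applying Theorem~\ref{T1} to the heat operator, with a H\"older exponent~$r$ large enough that $\|\cdot\|_{r+2,\Om}$ controls $\|\cdot\|_{C^l(\Om)}$, we obtain a solution $w$ of the heat equation on all of~$\RRt$ with $\|w-v\|_{C^l(\Om)}<\de/2$. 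It therefore suffices to produce $u_0\in C^\infty_c(\RR^n)$ with $\|e^{t\De}u_0-w\|_{C^l(\Om)}<\de/2$, i.e.\ to show that $w|_\Om$ lies in the $C^l(\Om)$-closure of the linear subspace $E:=\{\,e^{t\De}u_0|_\Om : u_0\in C^\infty_c(\RR^n)\,\}$.

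By Hahn--Banach this reduces to showing that any continuous linear functional~$\mu$ on $C^l(\Om)$ --- which we may represent as a distribution of order at most~$l$ with $\supp\mu\subset\Om$ --- annihilating~$E$ also annihilates~$w$. To exploit $\mu\perp E$ we introduce the heat potential
\[
U(y,s):=\big\langle\mu,\;K(\,\cdot-y,\;\cdot-s)\,\big\rangle ,
\]
which is smooth on $\RRt\setminus\supp\mu$, decreases like a Gaussian as $|y|\to\infty$ uniformly on every compact range of~$s$, vanishes for $s>T$ (because $K$ is supported in positive times), and satisfies $(\pd_s+\De_y)U=-\mu$ on~$\RRt$; in particular $U$ solves the backward heat equation $\pd_sU+\De_yU=0$ on $\RRt\setminus\Om$. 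Expanding $e^{t\De}u_0=\int K(\cdot-y,\cdot)\,u_0(y)\,dy$, the condition $\mu\perp E$ says exactly that $\int_{\RR^n}U(y,0)\,u_0(y)\,dy=0$ for all $u_0\in C^\infty_c(\RR^n)$, hence $U(\cdot,0)\equiv0$. Since $\Om\subset\{t\geq t_0\}$, the function $\si\mapsto U(\cdot,-\si)$ is, for $\si>-t_0$, a genuine (Schwartz in space) solution of the heat equation that vanishes at $\si=0$; uniqueness of the Cauchy problem together with the injectivity of the heat semigroup then forces it to vanish identically, so $U\equiv0$ on $\{s<t_0\}$.

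It remains to deduce $\langle\mu,w\rangle=0$. Choose a cut-off $\phi\in C^\infty_c(\RR)$ depending on the time variable alone, equal to~$1$ on a neighborhood of $[t_0,T]$ and supported in $(t_0-\vep,T+\vep)$ for a small $\vep>0$ with $t_0-\vep>0$; the key point is that, because $w$ is a \emph{global} solution, $\phi w$ is well defined and $\phi\equiv1$ near~$\Om$. Inserting a spatial cut-off $\psi_\rho$ to legitimise the integration by parts and then letting $\rho\to\infty$, one gets
\[
\langle\mu,w\rangle=\langle\mu,\phi w\rangle=-\big\langle(\pd_s+\De_y)U,\;\phi w\big\rangle=-\big\langle U,\;(-\pd_s+\De_y)(\phi w)\big\rangle=\big\langle U,\;\phi'(s)\,w\big\rangle ,
\]
where the commutator contribution supported in $\rho\le|y|\le2\rho$ drops out in the limit thanks to the Gaussian decay of~$U$ (here one uses that the global solution furnished by Theorem~\ref{T1} can be taken to grow at most polynomially in space, a property a minor modification of its construction for the heat operator should deliver). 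Now $\supp\phi'\subset\{t_0-\vep<s<t_0\}\cup\{T<s<T+\vep\}$: on the first set $U\equiv0$ by the previous paragraph, and on the second $U\equiv0$ because $U$ vanishes for $s>T$. Hence the last pairing is zero, $\langle\mu,w\rangle=0$, which completes the Hahn--Banach argument and hence the proof.

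The step demanding the most care is the control of the potential~$U$: beyond the injectivity of the heat semigroup (which yields the vanishing of~$U$ below~$\Om$), one still needs the mild bound on the spatial growth of the global solution~$w$ to kill the commutator terms in the integration by parts. If one prefers to bypass Theorem~\ref{T1} and argue directly with $w=v$, so that the cut-off must be genuinely space--time dependent, the hypothesis that each $\Om^\cc(t)$ is connected re-enters essentially: one must propagate the vanishing of~$U$ from the region below~$\Om$ to a neighborhood of~$\partial\Om$ exterior to~$\Om$, which can be done by combining the spatial real-analyticity of caloric functions with a Runge-type sweeping of the pole of~$K$ and backward uniqueness in exterior domains. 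This propagation is the main obstacle along that route.
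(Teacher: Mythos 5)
Your duality strategy is genuinely different from the paper's proof (which never invokes Hahn--Banach at this stage: it expresses the compactly supported extension $\tv$ as a heat potential, replaces it by a finite sum of heat kernels with poles outside a large ball, Fourier-transforms in time, expands in spherical harmonics, truncates to obtain an entire caloric function $v_2$ with an explicit bound $|v_2(x,t)|\le Ce^{C|x|}$, and only then reads off the initial datum $u_0:=v_2(\cdot,0)$ cut off at scale $1/\ep$). The duality idea --- introducing the adjoint heat potential $U(y,s)=\langle\mu,K(\cdot-y,\cdot-s)\rangle$, noting that $\mu\perp E$ forces $U(\cdot,0)\equiv0$ and then $U\equiv0$ on $\{s<t_0\}$ by backward uniqueness / injectivity of the semigroup, and finally integrating by parts against a time cut-off --- is clean and would be an attractive alternative.

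However, as you partly acknowledge, the argument has a genuine gap exactly where the real work lies. In the pairing $\langle\mu,w\rangle=\langle U,\phi'w\rangle-\lim_{\rho\to\infty}\langle U,\phi\,[2\nabla w\cdot\nabla\psi_\rho+w\,\De\psi_\rho]\rangle$, the first term vanishes because $U\equiv0$ on $\supp\phi'$, but the commutator term is controlled only if the global solution $w$ supplied by Theorem~\ref{T1} has at worst sub-Gaussian growth in $|y|$ on the time slab $\supp\phi$. Theorem~\ref{T1} provides no bound whatsoever on $w$ at spatial infinity: the sweeping-to-infinity construction in Lemma~\ref{lem:sweepinf} only controls the partial sums on compact sets (and their difference outside the moving windows $W_m^T$), and the linear combinations it produces come from a Hahn--Banach argument with no bound on the coefficients. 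The parenthetical claim that ``a minor modification of its construction for the heat operator should deliver'' polynomial growth is precisely what the explicit Fourier/Bessel analysis in the paper's proof is designed to achieve, and it is not minor. Your fallback route (taking $w=v$ with a genuine space-time cut-off) is also left incomplete, and the obstacle you name is in fact insurmountable as stated: from $\mu\perp E$ one only gets $U\equiv0$ on $\{s<t_0\}\cup\{s>T\}$, the parabolic unique continuation result in Theorem~\ref{thm:ucp} propagates only within a fixed time slice and cannot cross the levels $t\in[t_0,T]$, and indeed $U$ need \emph{not} vanish on $\Om^\cc\cap\{t_0\le s\le T\}$ (this would amount to far more orthogonality conditions than $\mu\perp E$ imposes). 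So the commutator near $\partial\Om$ has no reason to vanish. To make the duality route viable you would have to supply, independently, a caloric approximation of $v$ on $\Om$ that is global and sub-Gaussian in space --- which is precisely what the paper's Fourier/spherical-harmonic construction accomplishes.
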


In the simpler case of elliptic equations, the existing global
approximation theorems with decay~\cite{Acta,APDE,JEMS} apply to the constant-coefficient
equations $\De u+u=0$ and $\curl u-u=0$ and lead to a pointwise decay
estimate of the form $|u(x)|< C|x|^{-\frac {n-1}2}$, which is very
different from the Gaussian-type decay provided by~\eqref{Ku}. It is worth mentioning that these
approximation results for elliptic equations can be strengthened to deal with equations of nonconstant coefficients via
the integral-type decay conditions described in terms of the Agmon--H\"ormander seminorm. In view
of its relevance for applications, details
are presented in an Appendix (Theorem~\ref{T.appendix}).


\subsection*{Applications to local hot spots and isothermic surfaces}

In the rest of the Introduction we will discuss applications of our
global approximation theorems to the study of the movement of hot
spots and isothermic hypersurfaces for parabolic equations. These
topics have attracted considerable attention for decades, particularly
concerning the large-time behavior of hot
spots~\cite{Chavel,Ishige1,Ishige2,Brasco}, and the existence of
stationary isothermic hypersurfaces and hot spots and the Matzoh ball
soup problem~\cite{MS2,MS1,Sakaguchi}. A related problem, which
concerns the behavior of the second Neumann eigenfunction of a bounded
domain, is the celebrated
hot spots conjecture of Rauch, the first counterexample to which was
found in~\cite{Werner}. Let us recall that a point $X\in\RR^n$ is a {\em (local) hot spot}\/ at time~$t$ of a
solution $u(x,t)$ to the parabolic equation $Lu=0$ if it is a (local)
maximum of~$u(\cdot, t)$, and that an isothermic hypersurface at
time~$t$ is a connected component of a level set of $u(\cdot,t)$. 

Even for the heat equation in the whole Euclidean space~$\RR^n$, despite the large literature on the
subject, not much is known about how local hot spots (or any other
critical points of~$u(\cdot,t)$) can move other than the fact that~\cite{Chavel} if
the initial condition is nonnegative  and compactly supported, the global hot spots must converge
to a point (specifically, the center of mass of the initial
datum). If the initial condition is convex, the convexity is preserved
in the evolution, so there is a unique local (and global) hot spot at
each time. Generalizations of this fact can be found in~\cite{Lions}
and references therein.

We shall next state two theorems showing that the movement of local hots spots
and isothermic hypersurfaces of solutions to a general parabolic
equation on~$\RR^{n+1}$ can be remarkably rich. More precisely, the
first of these results asserts that there are global solutions having
local hot spots that move along any prescribed spatial curve for all
times, up to a small error (allowing for this small error, however, is
key: the construction fails if $\de(t_0)=0$ at some time~$t_0$):

\begin{theorem}\label{T.cp}
Let $\ga:\RR\to\RR^n$ be a continuous
curve in space (possibly periodic or with self-intersections) and take
any positive continuous function on the line $\de(t)$. Then
there is a solution to the parabolic equation $Lu=0$ on
$\RR^{n+1}$ such that, at each time $t\in\RR$,
$u$ has a local hot spot $X_t$ with
\[
|X_t-\ga(t)|<\de(t)\,.
\]
Furthermore, if $L=-\pd_t+\De$ is the heat operator and $[T_1,T_2]\subset[0,\infty)$ is any
finite interval, for any $\de_0>0$ there is a function $u_0\in C^\infty_c(\RR^n)$ such
that, at each time $t\in [T_1,T_2]$, $\tu:=e^{t\De} u_0$ has a local hot spot $\tX_t$ with
\[
|\tX_t-\ga(t)|<\de_0\,.
\]
\end{theorem}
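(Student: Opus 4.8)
The plan is to deduce the statement from the global approximation theorems: one produces a closed set $\Om\subset\RRt$ meeting the required topological hypothesis together with a local solution $v$ of $Lv=0$ near $\Om$ whose time slices carry a local maximum mechanically trapped in a small ball centred near $\ga(t)$, and then invokes Theorem~\ref{T1} (or Theorem~\ref{T2} for the second assertion) to replace $v$ by a genuine global solution for which the same trapping survives.

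First I would fix a locally finite partition $\{I_k=[t_k,t_{k+1}]\}_k$ of the line into closed intervals, fine enough that $\ga$ and $\de$ each oscillate on $I_k$ by at most a tiny fraction of $\inf_{I_k}\de$, and attach to $I_k$ a sample point $p_k:=\ga(t_k)+\tfrac14\de(t_k)\,e_k$ and a radius $0<\rho_k<\tfrac1{100}\inf_{I_k}\de$. Choosing consecutive unit vectors $e_k$ to point in opposite directions (say $e_k=(-1)^ke_1$) forces $|p_k-p_{k+1}|\gtrsim\de$ while $\rho_k+\rho_{k+1}\ll\de$, so that the closed balls $\overline B(p_k,\rho_k)$ and $\overline B(p_{k+1},\rho_{k+1})$ are disjoint even on intervals where $\ga$ is slow or constant; this is the only place the licence to err by $\de(t)$ is used, namely to separate the pieces. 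Put $\Om:=\bigsqcup_k\overline B(p_k,\rho_k)\times I_k$: this is closed, $\Om(t)$ is a union of at most two disjoint balls, and since $n\geq2$ its complement $\Omc(t)$ is connected and unbounded, so the hypothesis of Theorem~\ref{T1} holds; and by the choice of fractions $\overline B(p_k,\rho_k)$ lies inside the ball of radius $\de(t)$ about $\ga(t)$ for every $t\in I_k$.

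Next, on a neighbourhood of each pancake I would choose a solution $w_k$ of $Lw_k=0$ that is positive on $\overline B(p_k,\rho_k)$ and, on every slice $t\in I_k$, attains its maximum over $\overline B(p_k,\rho_k)$ at an interior point with a uniform moat $\max_{\pd B(p_k,\rho_k)}w_k(\cdot,t)\leq(1-\si_k)\max_{\overline B(p_k,\rho_k)}w_k(\cdot,t)$, $\si_k\in(0,1)$. For the heat operator one can simply take $w_k(x,t)=C_k-|x-p_k|^2-2nt$, which has a nondegenerate maximum exactly at $p_k$; for a general $L$ one solves, on $B(p_k,2\rho_k)$ over a time interval slightly larger than $I_k$, the Dirichlet problem with vanishing lateral data and a smooth nonnegative initial datum sharply peaked at $p_k$ and supported in $B(p_k,\rho_k/2)$, so that by the strong maximum principle $w_k>0$ inside and, the pancake being short in time, the solution has not yet diffused and its maximum over $\overline B(p_k,\rho_k)$ remains in $B(p_k,\rho_k/2)$. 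This last point — producing $w_k$ whose maximum genuinely stays inside the small ball with a moat controlled uniformly over all of $I_k$ — is where I expect the real work to be, and it is what forces the pancakes to be short in time and thereby couples the choices of $\rho_k$, of the displacement of $p_k$, and of a compensating factor $\La_k$ introduced now: set $v:=\La_kw_k$ on the $k$-th pancake, the constants $\La_k>0$ being large enough that any function within sup-distance $1$ of $\La_kw_k$ on the $k$-th slices still obeys the moat inequality there. Since $Lv=0$ near $\Om$ and $v$ is as regular as required, Theorem~\ref{T1} yields a global solution $u$ of $Lu=0$ on $\RRt$ with $\|u-v\|_{r+2,\Om}<1$.

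For each $t$, choosing $k$ with $t\in I_k$, the function $u(\cdot,t)$ is within $1$ of $\La_kw_k(\cdot,t)$ on $\overline B(p_k,\rho_k)$, hence by the choice of $\La_k$ it attains its maximum over that ball at an interior point $X_t$, which is therefore a local hot spot of $u$ at time $t$, and $|X_t-\ga(t)|\leq\rho_k+|p_k-\ga(t_k)|+|\ga(t_k)-\ga(t)|<\de(t)$. (In the heat case one may instead keep $w_k=C_k-|x-p_k|^2-2nt$, whose Hessian is $-2\,\mathrm{Id}$, and locate $X_t$ by the implicit function theorem applied to $\grad_xu(\cdot,t)=0$.) The factors $\La_k$ are precisely what upgrades the single uniform error granted by Theorem~\ref{T1} to the $t$-dependent accuracy $\de(t)$, which is also why the construction would collapse if $\de(t_0)=0$ at some $t_0$. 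Finally, the second assertion is obtained by running the same construction on the compact interval $[T_1,T_2]$: then $\Om$ is compact and contained in $\RR^{n+1}_+$ with $\Omc(t)$ connected, the error $\de_0$ is a fixed positive constant so no large factors are needed, and Theorem~\ref{T2} supplies a $u_0\in C^\infty_c(\RR^n)$ for which $e^{t\De}u_0$ approximates $v$ on $\Om$ well enough to trap the hot spot.
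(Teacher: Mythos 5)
Your proposal is correct in its overall architecture, which is the same as the paper's: partition time into short pieces, displace the spatial domains alternately so the pieces are pairwise disjoint while covering every time, solve a parabolic initial--boundary value problem with vanishing lateral data on each piece, and replace the piecewise local solution by a global one via an approximation theorem; the second assertion is handled by Theorem~\ref{T2} on the now-compact $\Om$.

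There is, however, one genuinely different step. Where the paper first develops the \emph{better-than-uniform} approximation Lemma~\ref{L.technical} (proved by induction in Section~\ref{S.btu}) precisely so that the error on the $k$-th piece can be $\eta_k/3$ with $\eta_k\to0$ permitted, you exploit linearity of $L$: multiplying $w_k$ by a large constant $\Lambda_k$ gives another local solution, and a single uniform error $<1$ from Theorem~\ref{T1} relative to $\Lambda_k w_k$ is exactly as good as a piece-dependent error $<1/\Lambda_k$ relative to $w_k$. This rescaling observation bypasses Lemma~\ref{L.technical} entirely for Theorem~\ref{T.cp} and is a real economy. (The paper still needs that lemma for Theorem~\ref{T.levelsets}, where one cannot rescale so freely because the levels $\alpha_i$ are part of the conclusion, but for hot spots your route is leaner.) Your stationary pancakes, as opposed to the paper's tubes that follow $\gamma(t)$, are an inessential variant which also spares you the smoothing of $\gamma$ that the paper performs to make its change of variables $\xi=x-\gamma(t)+\cdots$ parabolic.

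You owe more detail in the one place you flag yourself: the moat for $w_k$ with a general $L$. Your ``has not yet diffused'' argument is a short-time continuity estimate whose constant depends on $\rho_k$ and the coefficients, and it forces a nontrivially coupled refinement of the time partition. The paper avoids all of this. It first conjugates $u\mapsto e^{-\lambda_0 t}u$ with $\lambda_0=\sup c$ to reduce to $c\leq0$; the maximum principle then makes $v_k>0$ throughout the \emph{unit-length} tube with $v_k=0$ on the lateral boundary, so the sliced maximum is interior for all $t$, and the moat constant $\eta_k$ is positive by compactness. It also does not insist that the maximum stay near a fixed point; instead it shrinks the tube adaptively by $\ep_k/2$, where $\ep_k$ is the infimal distance of the sliced maxima to the lateral boundary, chosen \emph{after} solving the IBVP. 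If you adopt the same reduction to $c\leq0$ and the same adaptive shrinking (choose the inner ball after you know where the maxima sit), your construction works on intervals of fixed length with no short-time estimates, and the proof is complete.
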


Since there are continuous curves $\ga:\RR\to\RR^n$ whose image is dense
in~$\RR^n$ and one can take an error function $\de(t)$ that tends to
zero as $|t|\to\infty$, an
immediate corollary is the following:

\begin{corollary}\label{C.cp}
There is a solution to the parabolic equation $Lu=0$ on
$\RR^{n+1}$ having a local hot spot~$X_t$ at each time~$t$ such that
$\{X_t: t\in\RR\}$ is dense in~$\RR^n$, that is,
\[
\inf_{t\in\RR}|X_t-x|=0
\]
for all $x\in\RR^n$.
\end{corollary}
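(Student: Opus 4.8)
The plan is to deduce Corollary~\ref{C.cp} directly from Theorem~\ref{T.cp} by feeding it a carefully chosen pair $(\ga,\de)$. The only point requiring a little attention is that mere density of the image of~$\ga$ in~$\RR^n$ is not quite enough: since the bound $|X_t-\ga(t)|<\de(t)$ is informative only where $\de$ is small, and we will want $\de(t)\to0$ as $|t|\to\infty$, we need a curve that revisits every neighborhood of every point at arbitrarily large times. So the first step is to construct a continuous curve $\ga\colon\RR\to\RR^n$ with the following recurrence property: for every $x\in\RR^n$, every $\ep>0$ and every $T>0$ there exists $t\in\RR$ with $|t|\geq T$ and $|\ga(t)-x|<\ep$.

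To build such a curve I would fix a countable dense subset $\{q_j\}_{j\geq1}\subset\RR^n$ and choose a sequence $(p_m)_{m\geq0}$ taking values in $\{q_j\}$ in which every $q_j$ occurs infinitely often (for instance, by running through the enumeration $q_1;\,q_1,q_2;\,q_1,q_2,q_3;\dots$). Then let $\ga$ be the piecewise linear curve on $[0,\infty)$ determined by $\ga(m):=p_m$ and affine on each interval $[m,m+1]$, extended to $t\leq0$ by $\ga(t):=\ga(-t)$. This $\ga$ is continuous, and since for any $x$ and $\ep$ there are arbitrarily large integers $m$ with $p_m=q_j$ for some $q_j$ with $|q_j-x|<\ep$, the recurrence property holds.

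Next I would pick any positive continuous function $\de$ on the line with $\de(t)\to0$ as $|t|\to\infty$, say $\de(t):=(1+t^2)^{-1}$, and apply Theorem~\ref{T.cp} to this $\ga$ and this $\de$ (in the heat-operator case one may instead invoke the second part of that theorem on exhausting intervals, but for the corollary the first part suffices). This produces a solution $u$ of $Lu=0$ on $\RR^{n+1}$ with a local hot spot $X_t$ at each time satisfying $|X_t-\ga(t)|<\de(t)$. To conclude, fix $x\in\RR^n$ and $\ep>0$; choose $T$ so large that $\de(t)<\ep/2$ whenever $|t|\geq T$, and then use the recurrence property to find $t$ with $|t|\geq T$ and $|\ga(t)-x|<\ep/2$. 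The triangle inequality gives $|X_t-x|\leq|X_t-\ga(t)|+|\ga(t)-x|<\de(t)+\ep/2<\ep$, so $\inf_{t\in\RR}|X_t-x|=0$. As $x\in\RR^n$ was arbitrary, the set $\{X_t:t\in\RR\}$ is dense in~$\RR^n$.

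This is a soft argument: the only place where something could go wrong is the interplay between the decay of~$\de$ at infinity and the recurrence of~$\ga$, which is precisely why one must not settle for a curve whose image is merely dense but rather insist on one that returns near every point for arbitrarily large times. Everything else is an immediate consequence of Theorem~\ref{T.cp}.
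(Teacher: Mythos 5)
Your proof is correct and follows the same route as the paper, which simply asserts that one may take a continuous curve with dense image together with an error function tending to zero at infinity and then apply Theorem~\ref{T.cp}. You correctly identify a genuine subtlety that the paper's one-line justification glosses over: mere density of $\ga(\RR)$ is not enough once $\de(t)\to0$, since (e.g.\ via Peano-type curves filling successive annuli) one can build dense curves that approach a given point only at bounded times, where $\de$ need not be small; your recurrent piecewise-linear curve through a countable dense set with each point repeated infinitely often repairs this cleanly, and the triangle-inequality conclusion is then immediate.
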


The next result, which complements the previous theorem, shows that
there are global solutions that exhibit an isothermic hypersurface of
arbitrarily complicated compact topology (which can change in time). In order
to state this result, let us denote by
\[
\Ga_{u,\al}(t):=\{x\in\RR^n: u(x,t)=\al\}
\]
the level set~$u=\al$ at time~$t$. All the hypersurfaces that we
consider in this paper are of class $C^2$.

\begin{theorem}\label{T.levelsets}
Take a possibly infinite subset $\cI\subset\ZZ$ and, for each index $i\in \cI$, a compact orientable hypersurface without boundary
$\Si_i\subset\RR^n$. If $i$ and $i+1$ belong to~$\cI$, we also assume
that the domains bounded by~$\Si_i$ and~$\Si_{i+1}$ are disjoint. If
$c(x,t)\leq 0$, then there exists a solution of the
equation $Lu=0$ on $\RR^{n+1}$, constants $\al_i>0$ close to zero, and diffeomorphisms
$\Phi_t$ of $\RR^n$ arbitrarily close to the identity in the
$C^{r+2}$~norm such that $\Phi_t(\Si_i)$ is a connected component of
the isothermic hypersurface $\Ga_{u,\al_i}(t)$ whenever $\lfloor
t\rfloor =i$. Furthermore:
\begin{enumerate}
\item If the coefficients of the operator~$L$ depend analytically on~$x$, one can
  take $\al_i=0$ for all~$i\in\cI$.
\item If the set $\cI$ is finite, one can take some $\al>0$ such that $\al_i=\al$
  for all $i\in\cI$.
\item If $L=-\pd_t+\De$ and $\cI$ is a finite subset of
  nonnegative integers,
then one can take~$u:= e^{t\De}u_0$ with $u_0\in C^\infty_c(\RR^n)$.
\end{enumerate}
\end{theorem}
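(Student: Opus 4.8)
The plan is to deduce Theorem~\ref{T.levelsets} from the global approximation theorem (Theorem~\ref{T1}, and Theorem~\ref{T2} for part~(iii)) by applying it to a carefully chosen closed set $\Om$ on which one prescribes a local solution with essentially frozen level sets, and then invoking the stability of regular level sets under $C^1$-small perturbations.

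\emph{Step 1: the spacetime region.} For each $i\in\cI$ let $\BC_i$ be the compact set obtained from $\Si_i$ by filling in all the bounded connected components of $\RR^n\setminus\Si_i$, let $\BC_i^\ep$ be its closed $\ep$-neighborhood for a small $\ep>0$ fixed in advance, and set $\Om:=\bigcup_{i\in\cI}\BC_i^\ep\times[i,i+1]$. This is closed, since only finitely many slabs meet any bounded time interval. For $t\in(i,i+1)$ the slice $\Om(t)=\BC_i^\ep$ has connected unbounded complement; at a transition time $t=i+1$ with $i,i+1\in\cI$ one has $\Om(i+1)=\BC_i^\ep\cup\BC_{i+1}^\ep$, which by the disjointness hypothesis is a disjoint union of two compact sets, so its complement is again connected because $n\geq 2$. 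Hence $\Om^\cc(t)$ has no bounded connected component for any $t$, and Theorem~\ref{T1} applies.

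\emph{Step 2: the local model solutions (the crux).} One must produce, for each $i$, a small $\al_i>0$ (shrinking with $\ep$) and a solution $v_i$ of $Lv_i=0$ on a neighborhood of $\BC_i^\ep\times[i,i+1]$ such that, uniformly for $t\in[i,i+1]$, $\al_i$ is a regular value of $v_i(\cdot,t)$ and $v_i(\cdot,t)^{-1}(\al_i)$ has a connected component that is a $C^{r+2}$-small normal graph over $\Si_i$. When $L$ has time-independent coefficients — in particular for $L=-\pd_t+\De$ — this is easy: take $v_i(x,t):=w_i(x)$ a \emph{stationary} solution (a harmonic function in the heat case) having $\Si_i$, up to a $C^{r+2}$-small perturbation, as a regular level set; the existence of such a $w_i$, for any prescribed level value including $\al_i=0$, is the known elliptic "hypersurfaces as level sets" theorem, and, the level set being genuinely $t$-independent, there is nothing to control in time. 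For a general $L$ one writes $v_i=\al_i+\psi_i$, turning $Lv_i=0$ into the inhomogeneous equation $L\psi_i=-c\,\al_i$ whose right-hand side has size $O(\al_i)$; $v_i$ is then obtained by solving this on a large spacetime cylinder with boundary data tailored to $\Si_i$ and invoking the parabolic Schauder theory. The hypothesis $c\leq 0$ enters here through the maximum principle: it rules out an exponential-in-time loss in the estimates (uniform in $i$, which matters when $\cI$ is infinite) and makes the constant $\al_i$ a supersolution, which is what keeps $\al_i$ an attained, regular level value near $\Si_i$. Obtaining the required control of $v_i$ on the whole unit interval $[i,i+1]$, rather than only for $t$ near $i$, is the delicate point and the main technical obstacle of the proof, and is where the bulk of the parabolic analysis lies. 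Since the slabs occupy disjoint time intervals except at integer times, where they occupy disjoint spatial regions, the $v_i$ patch into a single local solution $v$ on a neighborhood of $\Om$.

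\emph{Step 3: approximation, stability, and the refinements.} Theorem~\ref{T1} produces a global solution $u$ of $Lu=0$ with $\|u-v\|_{r+2,\Om}<\de$. Taking $\de$ small, the $C^1$-closeness of $u$ to $v_i$ on each slab, together with the lower bound on $|\nabla v_i|$ along the component of $v_i(\cdot,t)^{-1}(\al_i)$ near $\Si_i$, lets the implicit function theorem in a tubular neighborhood of $\Si_i$ (upgraded to $C^{r+2}$ by interior regularity of $u$) produce a connected component of $\Ga_{u,\al_i}(t)$ that is a normal graph over $\Si_i$ whose $C^{r+2}$-norm can be made as small as one wishes by shrinking $\ep$ and $\de$; extending the corresponding graph map to a compactly supported diffeomorphism of $\RR^n$ yields the $\Phi_t$ close to the identity. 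For part~(i), if the coefficients of $L$ are analytic in $x$ one uses the analytic version of the elliptic "level sets" theorem to take $\al_i=0$. For part~(ii), if $\cI$ is finite one replaces each $v_i$ by $(\al/\al_i)\,v_i$ with $\al:=\min_{i\in\cI}\al_i>0$, which is still a solution of $Lv=0$ by linearity and still has $\al$ as a regular value, so that all level values coincide. For part~(iii), if $L=-\pd_t+\De$ and $\cI$ is a finite set of nonnegative integers, $\Om$ is compact and, after an irrelevant time translation, contained in $\RR^{n+1}_+$, so one invokes Theorem~\ref{T2} in place of Theorem~\ref{T1} and obtains $u=e^{t\De}u_0$ with $u_0\in C^\infty_c(\RR^n)$.
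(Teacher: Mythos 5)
Your overall architecture is right: construct local model solutions $v_i$ on disjoint slabs, approximate globally, and conclude by stability of regular level sets via Thom's isotopy theorem. The rescaling trick $(\al/\al_i)v_i$ for part~(ii) is correct and slightly different from (and as good as) what the paper does, and parts~(i) and~(iii) are handled in the right spirit. However, there are two genuine gaps.

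First, and most seriously, Step~2 — which you yourself flag as ``the crux'' and ``the main technical obstacle'' — is not actually carried out, and the sketch you give does not work as stated. Writing $v_i=\al_i+\psi_i$ and solving $L\psi_i=-c\,\al_i$ ``with boundary data tailored to $\Si_i$'' does not explain why the level set $\{v_i=\al_i\}$ should stay $C^{r+2}$-close to $\Si_i$ uniformly over the whole unit time interval, nor why $\al_i$ should be a regular value there; invoking that the constant $\al_i$ is a supersolution (since $c\leq 0$ gives $L\al_i = c\,\al_i\leq 0$) does not by itself give a nondegenerate gradient on a hypersurface near $\Si_i$. The paper's construction resolves this cleanly and you should know it: let $D_i$ be the bounded domain with $\pd D_i=\Si_i$, and solve the initial–boundary value problem
\[
Lv_i=0\ \text{in}\ D_i\times\big(i-\tfrac15,i+\tfrac65\big),\qquad v_i\big|_{\Si_i}=0,\qquad v_i\big(\cdot,i-\tfrac15\big)=\phi_i\geq 0,\ \phi_i\not\equiv 0.
\]
Because $c\leq 0$, the maximum principle gives $v_i>0$ in the interior and Hopf's boundary point lemma gives $\pd_\nu v_i<0$ on $\Si_i$, so for every $t$ in the relevant interval $\Si_i$ is exactly the zero level set and $\nabla v_i$ is nonvanishing there; hence small positive levels are automatically $C^1$-close to $\Si_i$, and Thom's isotopy theorem applies without further effort. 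This pins $\Si_i$ to a fixed (time-independent) hypersurface — the zero set — rather than trying to control a moving level surface, which is precisely what your sketch leaves unresolved. The role of $c\leq 0$ is exactly to make this maximum-principle argument work, not to control exponential growth.

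Second, in Step~3 you invoke Theorem~\ref{T1}, which gives a \emph{uniform} bound $\|u-v\|_{r+2,\Om}<\de$. For an infinite index set $\cI$ this is insufficient: the error tolerance $\eta_i$ that Thom's theorem requires on the $i$-th slab depends on quantitative lower bounds for $|\nabla v_i|$ near $\{v_i=\al_i\}$, and these can degenerate as $i\to\pm\infty$ because the domains $D_i$ and data $\phi_i$ are arbitrary. One needs the \emph{better-than-uniform} approximation of Lemma~\ref{L.technical}, which allows a different error $\de_i$ on each compact component $\Om_i$; this is exactly what the paper uses. (Your slabs $\BC_i^\ep\times[i,i+1]$ would also need to be thickened slightly in time — the paper uses $[i-\tfrac18,i+\tfrac98]$ inside $(i-\tfrac15,i+\tfrac65)$ — so that $v$ satisfies the equation in an open neighborhood of $\Om$, but this is a minor point once the rest is fixed.)

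Finally, for part~(i) your appeal to an ``analytic elliptic level sets theorem'' is not the right tool in the parabolic setting with time-dependent coefficients. The paper's argument is simpler: analyticity of the coefficients in $x$ makes $v_i(\cdot,t)$ analytic up to $\Si_i$, so the equation holds on a slightly larger domain $D_i'\supset\overline{D_i}$, and then Thom's theorem already applies at the level $\al_i=0$ because the zero set $\Si_i$ is a regular level set in the interior of the domain of analyticity.
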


\begin{remark}\label{rem:torus}It is worth mentioning that the behavior described in
Theorems~\ref{T.cp} and~\ref{T.levelsets} is structurally stable,
meaning that a suitably small perturbation of the function~$u(x,t)$
(e.g., small in~$C^1(\RR^{n+1})$)
still presents the same prescribed collection of hot spots or
isothermic hypersurfaces up to a diffeomorphism close to the identity.
\end{remark}

\subsection*{Local hot spots and isothermic surfaces on the flat torus}

It is well known that the large-time behavior of solutions of the heat
equation in a compact Riemannian manifold~$M$ is much more rigid than in
the whole Euclidean space. In particular, for large times one has that
\[
u(x,t)=c_0+c_1e^{-\la_1 t}\psi_1(x) + O(e^{-\la_2 t})\,,
\]
where $0<\la_1\leq \la_2\leq\cdots$ are the eigenvalues of the
Laplacian on the manifold, $\psi_j(x)$ are the corresponding
eigenfunctions and
\[
c_0:=\frac1{|M|}\int_M u(x,0)\, dx\,,\qquad c_1:=\frac{\int_M
  u(x,0)\,\psi_1(x)\, dx}{\int_M
  \psi_1(x)^2\, dx}
\]
are constants. In particular, it is apparent from this formula  that all the local hot spots of~$u(x,t)$
converge
to those of the first nontrivial eigenfunction~$\psi_1(x)$ as $t\to\infty$ for generic
metrics on~$M$ and generic initial data, and this observation can be
easily refined.

Our last objective in this paper is to show that, although global
approximation theorems do not apply in this context, our previous
results do have some bearing on the possible behavior of solutions to
the heat equation on the flat torus $\TT^n$, with
$\TT:=\RR/2\pi\ZZ$. Specifically, we will next state two theorems,
related to Theorems~\ref{T.cp} and~\ref{T.levelsets} above, about
solutions to the heat equation when the space variable takes values on the torus. The
first of them shows that, for suitably short times, there is no obstruction to
the (possibly knotted or self-intersecting) trajectories that local
hot spots can follow on suitably small scales:

\begin{theorem}\label{T.torus}
Let $\ga: [T_1,T_2]\to B_1$ be a (possibly self-intersecting)
continuous curve
contained in the unit $n$-dimensional ball. Given $\de>0$, there
exists an arbitrarily small $\ep>0$ and a solution of the
heat equation ${\pd_t u}-\De u=0$ on $\TT^n\times\RR$
having a local hot spot $X_t$ contained in the ball of
radius~$\sqrt\ep$ for all times in~$[\ep T_1,\ep T_2]$ with the
property that
\[
\bigg|\frac1{\sqrt\ep} X_{\ep \tau}-\ga(\tau)\bigg|<\de
\]
for all $\tau\in [T_1,T_2]$.
\end{theorem}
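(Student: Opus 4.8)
The plan is to deduce the statement from the heat‑equation part of Theorem~\ref{T.cp} by a parabolic rescaling, exploiting that on spatial scale $\sqrt\ep$ and time scale $\ep$ the flat torus is, for the heat equation, exponentially close to $\RR^n$; a finite Fourier truncation then upgrades the construction to a genuine solution on $\TT^n\times\RR$. We may assume $T_1\geq0$: replacing the parameter of $\ga$ by $\tau\mapsto\ga(\tau+T_1)$ on $[0,T_2-T_1]$, and translating the time variable of the constructed solution by the same amount, reduces the general interval to this case (and the heat flow in Theorem~\ref{T.cp} then applies, since $[0,T_2-T_1]\subset[0,\infty)$). We may also shrink the target curve: fix $\eta:=\de/4$ and set $\widetilde\ga:=(1-\eta)\ga$, so $\widetilde\ga$ maps $[T_1,T_2]$ into the closed ball $\overline{B_{1-\eta}}$. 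Applying the second statement of Theorem~\ref{T.cp} to $\widetilde\ga|_{[T_1,T_2]}$ with error $\de_0:=\eta/2$ produces $u_0\in C^\infty_c(\RR^n)$, say with $\supp u_0\subset B_R$, such that $w:=e^{t\De}u_0$ has at each time $\tau\in[T_1,T_2]$ a local hot spot $\tX_\tau$ with $|\tX_\tau-\widetilde\ga(\tau)|<\de_0$; hence $|\tX_\tau|<1-\eta/2$ and $|\tX_\tau-\ga(\tau)|<3\eta/2$. By construction (cf.\ Remark~\ref{rem:torus}) each $\tX_\tau$ may be taken to be a strict local maximum, so by compactness of $[T_1,T_2]$ there is a small $\rho_0\in(0,\eta/2)$ with
\[
\mu_0:=\min_{\tau\in[T_1,T_2]}\Big(w(\tX_\tau,\tau)-\max_{|z-\tX_\tau|=\rho_0}w(z,\tau)\Big)>0\,.
\]

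Next set $w_\ep(x,t):=w(x/\sqrt\ep,t/\ep)$, which solves the heat equation on $\RR^n\times(0,\infty)$ and equals $e^{t\De}u_{0,\ep}$ for $u_{0,\ep}(x):=u_0(x/\sqrt\ep)$. At time $t=\ep\tau$ it has a strict local maximum at $\sqrt\ep\,\tX_\tau$, and --- the key point --- since the parabolic scaling does not change function values, the depth of that maximum over the ball $\{|z-\sqrt\ep\,\tX_\tau|<\rho_0\sqrt\ep\}\subset B_{\sqrt\ep}$ is still $\geq\mu_0$, a constant independent of $\ep$. Choose $\ep$ small enough that $\sqrt\ep\,R<\pi$; then $u_{0,\ep}$ may be viewed as a smooth function on $\TT^n$ supported in a coordinate ball about the origin, and we let $u:=e^{t\De_{\TT^n}}u_{0,\ep}$. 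Expressing $u$ through the periodized heat kernel gives $u(x,t)-w_\ep(x,t)=\sum_{m\in(2\pi\ZZ)^n\setminus\{0\}}w_\ep(x-m,t)$ on the fundamental domain, and the Gaussian decay of $K$ shows that $\|u-w_\ep\|_{C^0(\overline{B_{\sqrt\ep}}\times[\ep T_1,\ep T_2])}\to0$ as $\ep\to0$; in particular it is $<\mu_0/4$ for $\ep$ small. Finally, since the heat flow $u$ is a priori defined only for $t\geq0$, pass to the truncation $u_N(x,t):=\sum_{|k|\leq N}c_k\,e^{-|k|^2t}e^{ik\cdot x}$ of its eigenfunction expansion $u(x,t)=\sum_{k\in\ZZ^n}c_k\,e^{-|k|^2t}e^{ik\cdot x}$: each $u_N$ is a genuine solution of the heat equation on all of $\TT^n\times\RR$, and rapid decay of the $c_k$ (from $u_{0,\ep}\in C^\infty(\TT^n)$) gives $u_N\to u$ uniformly on $\TT^n\times[0,\infty)$, so $\|u_N-w_\ep\|_{C^0(\overline{B_{\sqrt\ep}}\times[\ep T_1,\ep T_2])}<\mu_0/2$ once $N$ is large.

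To conclude, fix $t\in[\ep T_1,\ep T_2]$ and write $\tau:=t/\ep$. On the closed ball $\overline{B_{\rho_0\sqrt\ep}}(\sqrt\ep\,\tX_\tau)\subset B_{\sqrt\ep}$, the depth estimate for $w_\ep$ together with $\|u_N-w_\ep\|_{C^0}<\mu_0/2$ gives $u_N(\sqrt\ep\,\tX_\tau,t)>\max_{|z-\sqrt\ep\,\tX_\tau|=\rho_0\sqrt\ep}u_N(z,t)$, so $u_N(\cdot,t)$ attains its maximum over that closed ball at an interior point $X_t$, which is therefore a local hot spot of $u_N$ contained in $B_{\sqrt\ep}$, with $|X_t-\sqrt\ep\,\tX_\tau|<\rho_0\sqrt\ep$. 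Hence for every $\tau\in[T_1,T_2]$
\[
\Big|\tfrac1{\sqrt\ep}X_{\ep\tau}-\ga(\tau)\Big|\leq|\tX_\tau-\ga(\tau)|+\rho_0<\tfrac{3\eta}2+\tfrac\eta2=2\eta=\tfrac\de2<\de\,,
\]
and $u:=u_N$ is the required solution.

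The main obstacle is the mismatch between the ambient spaces --- Theorem~\ref{T.cp} furnishes a solution on $\RR^n$, whereas we need one on $\TT^n\times\RR$ --- compounded by the fact that a parabolically rescaled solution concentrates its derivatives at scale $\ep^{-1}$, which a priori jeopardizes any perturbative argument as $\ep\to0$. The scheme works because the only quantity that must survive the two perturbations (periodization onto the torus, and Fourier truncation) is the depth $\mu_0$ of the tracked maximum, and this is scale‑invariant, being a difference of function values; both perturbations can be made arbitrarily small in $C^0$, so a local hot spot persists in the prescribed small ball. The auxiliary difficulty that heat flows run only forward in time is dealt with cheaply by the Fourier truncation, which by construction solves the heat equation for all $t\in\RR$.
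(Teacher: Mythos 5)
Your proof is essentially correct and takes a genuinely different route to the torus than the paper does. The paper proves an intermediate result, Lemma~\ref{L.torus}, which transfers a Euclidean heat solution to a torus solution (after a parabolic rescaling) by writing $w=e^{t\De}f$ as a Fourier integral, approximating the measure $\hf(\xi)\,d\xi$ in the weak topology by a finite atomic measure supported on rational frequencies $\xi_j\in\mathbb Q^n$, and then choosing $\ep=1/(M^2N^2)$ with $M$ the common denominator so that the resulting trigonometric sum is $2\pi$-periodic; this gives a $C^k$-approximation for any prescribed~$k$, after which Theorem~\ref{T.cp} and the structural-stability remark are combined. You instead apply Theorem~\ref{T.cp} directly, parabolically rescale $w_\ep(x,t)=w(x/\sqrt\ep,t/\ep)$, periodize the compactly supported initial datum to obtain $u=e^{t\De_{\TT^n}}u_{0,\ep}$, estimate the lattice sum $\sum_{m\neq0}w_\ep(\cdot-m,\cdot)$ by the Gaussian decay of the heat kernel, and finally truncate the eigenfunction expansion to get a genuine solution on $\TT^n\times\RR$. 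Your observation that the depth of the tracked maximum is scale-invariant (so that a mere $C^0$-small perturbation suffices for persistence) is the right mechanism and is what makes the periodization-plus-truncation route work, even though it gives weaker control than the paper's $C^k$-approximation. Both approaches ultimately rest on a quantitative barrier around the local hot spot.

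One step deserves tightening. You assert that ``by compactness of $[T_1,T_2]$'' there is a fixed $\rho_0>0$ and a positive minimum depth
$\mu_0:=\min_{\tau}\big(w(\tX_\tau,\tau)-\max_{|z-\tX_\tau|=\rho_0}w(z,\tau)\big)>0$.
Strictness of each individual maximum plus compactness of the parameter interval does \emph{not} abstractly imply such a uniform bound (the maxima $\tX_\tau$ need not depend continuously on $\tau$, and the ``width'' of a strict local maximum could degenerate along a subsequence); the \emph{statement} of Theorem~\ref{T.cp} and Remark~\ref{rem:torus} alone do not furnish the barrier quantitatively. What actually delivers the estimate is the \emph{proof} of Theorem~\ref{T.cp}, which constructs, for each integer $k$ with $[k-\tfrac18,k+\tfrac98]\cap[T_1,T_2]\neq\emptyset$, an explicit positive gap $\eta_k/3$ on the boundary of a ball $B_{\frac{\de_k}4-\frac{\ep_k}2}\big(\ga(t)-(-1)^k\tfrac{\de_k}2 e\big)$, and since only finitely many $k$ are involved on a finite interval one may take $\mu_0:=\min_k\eta_k/3>0$. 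It is cleaner (and avoids the issue of centering the barrier at the a priori discontinuous point $\tX_\tau$) to use the paper's balls centered on the shifted curve rather than on the maximizer; the rest of your rescaling, periodization, truncation, and persistence argument then goes through unchanged. This is a fixable informality rather than a structural flaw.
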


Intuitively speaking, the reason
for which one considers times of order~$\ep$ and balls of
radius~$O(\sqrt\ep)$ in space is that it is at this scale that the
behavior of solutions to the heat equation on the torus can be shown
to resemble those of the heat equation on the whole Euclidean space,
and viceversa.

Our second theorem in this direction is an analog of
Theorem~\ref{T.levelsets} at these spacetime scales that shows that
there are global solutions of the heat equation on the torus that
feature isothermic hypersurfaces of prescribed, possibly rapidly
changing topologies:

\begin{theorem}\label{T.torus2}
For each index $0\leq i\leq N$, take a compact orientable hypersurface without boundary
$\Si_i$ contained in the unit ball and assume
that the domains bounded by~$\Si_i$ and~$\Si_{i+1}$ are
disjoint. Given $\de>0$ and an integer~$k$,  there
exists an arbitrarily small $\ep>0$ and a
solution of the heat equation ${\pd_t u}-\De u=0$ on
$\TT^n\times\RR$ with the following property: for each time $t\in [0,\ep
N]$ there is a connected component~$S_t$ of the isothermic hypersurface
$\Ga_{u,0}(t)$ contained in the ball of radius~$\sqrt\ep$ and such that
\[
\frac1{\sqrt\ep}S_{\ep \tau}= \Phi_\tau(\Si_i)
\]
for all $\tau\in[0,N]$ with $\lfloor \tau\rfloor =i$, where $\Phi_\tau$
is a diffeomorphism of the unit ball with $\|\Phi_\tau-\id\|_{C^k(B_1)}<\de$.
\end{theorem}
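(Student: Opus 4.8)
The plan is to mimic the proof of Theorem~\ref{T.torus} but track hypersurfaces rather than a single critical point, reducing everything to Theorem~\ref{T.levelsets} applied to a rescaled heat equation. First I would perform the parabolic rescaling $u_\ep(y,\tau):=u(\sqrt\ep\, y,\ep\tau)$, under which $\pd_t u-\De u=0$ on $\TT^n\times\RR$ becomes the heat equation $\pd_\tau u_\ep-\De_y u_\ep=0$ on $(\tfrac1{\sqrt\ep}\TT)^n\times\RR$; as $\ep\to0$ the rescaled torus exhausts $\RR^n$, so a local solution on a fixed large ball $B_R\subset\RR^n$ (with $R$ chosen so that $B_R$ contains a neighborhood of all the $\Si_i$ in the unit ball) will lift, for $\ep$ small enough, to a genuine solution on a piece of the rescaled torus. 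The technical device making this rigorous is the same one used for Theorem~\ref{T.torus}: given a solution $w$ of the heat equation on a ball $B_R\times[0,N]$, one wants a solution $u$ on $\TT^n\times[0,\ep N]$ whose rescaling $u_\ep$ is $C^k$-close to $w$ on $B_R\times[0,N]$. This is obtained by expanding the Fourier/heat representation: one approximates $w$ on the slightly smaller set by the Cauchy solution of a compactly supported datum (Theorem~\ref{T2}), periodizes that datum on the scale of the rescaled torus, and checks that the periodization error and the error from the curvature of the torus are both $o(1)$ as $\ep\to0$ in $C^k$ on $B_R\times[0,N]$.

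Second, I would invoke Theorem~\ref{T.levelsets}, case~(iii), on $\RR^n$ with the heat operator $-\pd_t+\De$ and the finite index set $\cI=\{0,1,\dots,N\}$ (finite subset of nonnegative integers) and the prescribed hypersurfaces $\Si_0,\dots,\Si_N$, whose bounding domains are pairwise disjoint for consecutive indices by hypothesis. That theorem produces $u_0\in C^\infty_c(\RR^n)$ such that $w:=e^{t\De}u_0$ has, whenever $\lfloor\tau\rfloor=i$, a connected component of $\Ga_{w,0}(\tau)$ equal to $\Psi_\tau(\Si_i)$ for a diffeomorphism $\Psi_\tau$ of $\RR^n$ (a fortiori of $B_1$) with $\|\Psi_\tau-\id\|_{C^{k}(B_1)}$ as small as desired; moreover, by Theorem~\ref{T.levelsets} the level value is exactly~$0$ in this heat-operator analytic case. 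Pulling $w$ back through the rescaling and the approximation of the previous paragraph, the solution $u$ on $\TT^n\times[0,\ep N]$ has, for $\ep$ small, $u_\ep$ so $C^k$-close to $w$ on $B_R\times[0,N]$ that the isothermic hypersurface $\Ga_{u_\ep,0}(\tau)$ has a connected component which is a small $C^k$-perturbation of $\Psi_\tau(\Si_i)$; this uses the structural stability noted in Remark~\ref{rem:torus}, i.e.\ that a nondegenerate (transversal) zero level set persists under $C^1$-small perturbations and is carried by a diffeomorphism close to the identity. Undoing the rescaling, $S_t:=\sqrt\ep\cdot(\text{that component of }\Ga_{u_\ep,0}(t/\ep))$ satisfies $\tfrac1{\sqrt\ep}S_{\ep\tau}=\Phi_\tau(\Si_i)$ with $\Phi_\tau$ a diffeomorphism of $B_1$ and $\|\Phi_\tau-\id\|_{C^k(B_1)}<\de$, and since the $\Si_i$ lie in the unit ball and $\Phi_\tau$ is close to the identity, $S_t$ lies in the ball of radius $\sqrt\ep$ (after possibly shrinking $\ep$ and enlarging the allowed $\de$-neighborhood slightly, or rescaling $\Si_i$ into a slightly smaller ball from the start).

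The main obstacle I expect is the control of the isothermic hypersurfaces under the two successive approximations: neither the Fourier periodization that passes from $\RR^n$ to $\TT^n$ nor the global-to-local step is exactly structure-preserving, so I must ensure a \emph{quantitative} transversality estimate for the zero level set of $w$ along $\Psi_\tau(\Si_i)$ (a lower bound on $|\grad_x w|$ on each component, uniform in $\tau\in[0,N]$ since the index set is finite). Such a bound is available from the construction in Theorem~\ref{T.levelsets}, where the relevant level set is obtained by a transversality/implicit-function argument and hence is automatically nondegenerate; once it is in hand, a standard implicit-function-theorem estimate converts the $C^k$-smallness of $u_\ep-w$ into the existence of the component $S_t$ and the bound on $\|\Phi_\tau-\id\|_{C^k}$, uniformly on $[0,N]$. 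A secondary point is that the statement requires a \emph{single} $\ep$ working for all $\tau\in[0,N]$ simultaneously; this is fine because $N$ is fixed and finite, so a finite intersection of the admissible ranges of $\ep$ (one near each integer $i$, plus the open intervals in between) is again an interval of admissible~$\ep$.
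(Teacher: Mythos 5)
Your proposal follows the same high-level strategy as the paper: first obtain a solution $w=e^{t\De}u_0$ on $\RR^{n+1}$ with the prescribed isothermic hypersurfaces via Theorem~\ref{T.levelsets}, case~(iii) (and you correctly note that, since the heat operator has analytic coefficients, case~(i) combines with case~(iii) to give a level value of exactly~$0$); second, approximate $w$, after parabolic rescaling, by a solution of the heat equation on $\TT^n\times\RR$; third, invoke the structural stability of Remark~\ref{rem:torus} to transfer the isothermic hypersurface component through both approximations. This is precisely what the paper does, with the middle step packaged as Lemma~\ref{L.torus}.

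The one genuine difference is \emph{how} you would prove that middle step. The paper's Lemma~\ref{L.torus} starts from the Fourier representation $w(x,t)=\int e^{ix\cdot\xi-t|\xi|^2}\hf(\xi)\,d\xi$, approximates the measure $\hf(\xi)\,d\xi$ in the weak topology by a discrete measure supported at finitely many \emph{rational} frequencies $\xi_j\in\mathbb Q^n$, and takes $\ep:=1/(M^2N^2)$ where $M$ clears the denominators of the $\xi_j$; the resulting trigonometric polynomial in $x$ is then exactly $2\pi$-periodic after the rescaling, producing an honest torus solution. You instead propose to take the compactly supported datum $f$ from Theorem~\ref{T2}, periodize it at the scale of the rescaled torus (so $u_0(x)=f(x/\sqrt\ep)$ on a fundamental domain for $\ep$ small), and control the difference via a method-of-images estimate $\sqrt\ep^{\,n}K_{\TT^n}(\sqrt\ep\,w,\ep\tau)\to K_{\RR^n}(w,\tau)$ in $C^k$ on compacts as $\ep\to0$. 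That argument is correct and arguably more transparent (the Gaussian decay of the heat kernel makes the non-trivial images vanish in the limit), though you should state it carefully: the flat torus has no curvature, so the phrase ``error from the curvature of the torus'' should instead refer to the contribution of the $k\neq0$ images, and ``periodizes that datum on the scale of the rescaled torus'' should be made precise, e.g.\ by saying $u_0(x):=\sum_{k\in\ZZ^n}f\bigl((x+2\pi k)/\sqrt\ep\bigr)$. One small technical point you should also flag, as the paper does in the proof of Theorem~\ref{T.cp}, is a time shift ($t\mapsto t+1$) so that the compact set $\Om$ from Theorem~\ref{T.levelsets} sits inside $\RR^{n+1}_+$ before invoking Theorem~\ref{T2}.
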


The paper is organized as follows. In Section~\ref{s:aboutK} we
recall a few facts about fundamental solutions that will be needed in
the rest of the paper and set some notations. In
Section~\ref{s:lemmas} we state and prove several technical lemmas
that are key in the proof of the global approximation results. The
proofs of Theorems~\ref{T1} and~\ref{T2} are respectively presented in
Sections~\ref{S.T1} and~\ref{S.T2}, with Section~\ref{S.btu} being
devoted to a refinement of Theorem~\ref{T1} that is crucially employed
in some applications. The results for local hot spots and isothermic
hypersurfaces in~$\RR^{n+1}$ are established in Sections~\ref{S.cp}
and~\ref{S.levelsets}, respectively, while the results for the heat
equation on the torus are presented in Section~\ref{S.torus}. The
paper concludes with an Appendix on global approximation theorems with
decay for elliptic equations.

\section{Fundamental solutions for parabolic equations}\label{s:aboutK}

In this section we state a few results on fundamental solutions for
parabolic equations and establish some notation.

We start by recalling that the parabolic H\"older seminorm of a
function $\vp(x,t)$ defined on a spacetime region
$\Om\subseteq\RR^{n+1}$ is defined as
\[
[\vp]_{\ga,\frac \ga 2,\Om}:=\sup_{\substack{(x,t),(y,s)\in\Om \\ (x,t)\neq(y,s)}}
\frac{|\vp(x,t)-\vp(y,s)|}{\big(|x-y|+|t-s|^{\frac 1 2}\big)^\ga}\,,
\]
where $0<\ga<1$. The parabolic H\"older norm of order~$r$ (and we
will hereafter assume that~$r>0$ is a non-integer real) can then be
written as
\[
\|\vp\|_{C^{r,\frac r 2}(\Om)}:=\max_{2i+|\al|\leq \lfloor r \rfloor }\sup_{(x,t)\in\Om}|D_x^\al\pd_t^i\vp(x,t)|+\max_{2i+|\al|=\lfloor r\rfloor}
[D_x^\al\pd_t^i\vp ]_{r-\lfloor r \rfloor, \frac{r-\lfloor r\rfloor}{2},\Om}\,,
\]
where $\lfloor \cdot \rfloor$ denotes the integral part.

We are now ready to state our regularity assumptions in terms of the
parabolic H\"older norms of the coefficients of the operator~$L$ and
of its formal adjoint, defined as
\begin{equation}\label{eq:L*}
L^*u:=\frac{\pa u}{\pa t}+\sum_{i,j=1}^n a_{ij}(x,t)\frac{\pa^2 u}{\pa x_i\pa x_j}
+\sum_{i=1}^n b^*_i(x,t)\frac{\pa u}{\pa x_i}+c^*(x,t)u
\end{equation}
with
\begin{align*}
b^*_i:=-b_i+2\sum_{j=1}^n\frac{\pa a_{ij}}{\pa x_j}\,,\quad
c^*:=c-\sum_{i=1}^n\frac{\pa b_i}{\pa x_i}+\sum_{i,j=1}^n \frac{\pa^2 a_{ij}}{\pa x_i \pa x_j}.
\end{align*}
Specifically, throughout we will assume that the coefficients of $L$ and $L^*$ are
in a parabolic H\"older space $C^{r,\frac r2}$ with a
non-integer real $r>2$:
\begin{equation}\label{regularity}
a_{ij}, b_i,b_i^*,c,c^*\in C^{r,\frac r2}(\RR^{n+1})\,.
\end{equation}

Under the regularity assumption~\eqref{regularity}, it is standard
(see~\cite[Chapter 1]{Friedman} or~\cite{Escauriaza}) that the operators $L$ and
$L^*$ admit fundamental solutions, which we denote by
$K(x,t;y,s)$ and $K^*(x,t;y,s)$ respectively. Furthermore, the
function $K$ belongs to $C^{r+2,\frac r 2+1}$ outside the diagonal and is bounded as
\begin{equation}\label{eq:Kbounds}
\big|D^\al_xD^\be_y\pd_t^k\pd_s^l K(x,t;y,s)\big|\leq C|t-s|^{-\frac{n+2k+2l+|\al|+|\be|}{2}}e^{-\frac{|x-y|^2}{C|t-s|}}
\end{equation}
for all multiindices with $|\al|+|\be|+2k+2l\leq \lfloor
r\rfloor+2$. It is well known that the connection between $K$ and
$K^*$ is
\begin{equation*}
K^*(x,t;y,s)=K(y,s;x,t)\,,
\end{equation*}
and that the solutions are causal in the sense that
\begin{align*}
K(x,t;y,s)=0\quad &\text{for }t<s\,,\\
K^*(x,t;y,s)=0\quad &\text{for }t>s\,.
\end{align*}

As $K(x,t;y,s)$ is a fundamental solution, given
any function $\vp$ in, say, $C^\infty_c(\RR^{n+1})$, the function
\[
w(x,t):=\int_{\RR^{n+1}} K(x,t;y,s)\, \vp(y,s)\, dy\, ds
\]
satisfies the equation
\[
L w=\vp\,.
\]
In particular,
\[
L \, K(\cdot,\cdot;y,s)=0
\]
for all $(x,t)\neq (y,s)$, and in fact, $LK(x,t;y,s)=
\de(x-y)\,\de(t-s)$ in the sense of distributions.

For future reference, let us record here that the parabolic operators
$L$ and $L^*$ satisfy the unique continuation principle, which one can
state as follows (see~\cite[Section 1.3]{Tataru}):

\begin{theorem}\label{thm:ucp}
Let $L$ be a uniformly parabolic operator as above and consider an open subset in spacetime
$U\subset\RRt$. If $u$ satisfies $Lu=0$ or $L^* u=0$ in a connected subset
$\Om\subset\RR^{n+1}$ containing $U$ and is identically zero in $U$,
it then follows that $u$ must vanish identically in the horizontal
component of $U$ in $\Om$, that is, $u(x,t)=0$ for all $(x,t)\in \Om$ such that $U(t)\neq\emptyset$.
\end{theorem}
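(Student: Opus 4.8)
The statement, restricted to the operator $L$, is the \emph{space-like unique continuation property} for uniformly parabolic equations, and the plan is to deduce Theorem~\ref{thm:ucp} from this known result together with a time-reversal reduction handling the adjoint. Under the regularity hypotheses~\eqref{regularity} the principal coefficients $a_{ij}$ are in particular Lipschitz in spacetime and all lower-order coefficients are bounded, so the relevant parabolic Carleman estimates apply and yield: if $Lu=0$ in an open spacetime set $\tOm\supset\Om$ and $u\equiv0$ on a nonempty open $U\subset\tOm$, then for every $t$ with $U(t)\neq\emptyset$ the function $u(\cdot,t)$ vanishes on the connected component of the slice $\tOm(t)$ containing $U(t)$. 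This is precisely the content of~\cite[Section~1.3]{Tataru} (in the special case of coefficients analytic in $x$ one can alternatively invoke the spatial analyticity of solutions of $Lu=0$); and since a component of $\tOm(t)$ meeting $U(t)$ contains the corresponding component of $\Om(t)$, it gives in particular $u(\cdot,t)=0$ on the connected component of $\Om(t)$ containing $U(t)$, which is exactly the assertion about the ``horizontal component of $U$ in $\Om$''.

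The case $L^*u=0$ is then handled by reversing time. If $L^*u=0$ on (a neighborhood of)~$\Om$, set $\tilde u(x,t):=u(x,-t)$ and $-\Om:=\{(x,-t):(x,t)\in\Om\}$; a direct computation using~\eqref{eq:L*} shows $\tilde L\tilde u=0$ on (a neighborhood of) $-\Om$, where $\tilde L$ is the operator of the form~\eqref{Lu} with coefficients $\tilde a_{ij}(x,t)=a_{ij}(x,-t)$, $\tilde b_i(x,t)=b_i^*(x,-t)$ and $\tilde c(x,t)=c^*(x,-t)$. Time reflection preserves both the uniform ellipticity constant $C_L$ and the parabolic H\"older class, so $\tilde L$ is again uniformly parabolic with coefficients satisfying~\eqref{regularity}; moreover $\tilde u$ vanishes on the open set $-U$, and since $(-U)(\tau)=U(-\tau)$ the conclusion for $\tilde u$ on $-\Om$ transcribes, after the substitution $t=-\tau$, into the desired conclusion for $u$ on~$\Om$. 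Hence it suffices to invoke the space-like unique continuation property quoted in the previous paragraph.

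The one genuinely difficult ingredient is that quoted property itself: its proof rests on delicate parabolic Carleman inequalities (or equivalently on quantitative doubling estimates for solutions, which is what rules out the pathological zero-set configurations that a naive connectedness argument would leave open), and it is taken directly from~\cite{Tataru}. Everything else here --- the time-reversal reduction for $L^*$ and the identification of the slice-wise statement with the claim about horizontal components --- is routine.
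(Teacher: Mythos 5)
Your proposal is essentially the paper's approach: the paper states Theorem~\ref{thm:ucp} as a known result and simply cites~\cite[Section~1.3]{Tataru}, and you correctly identify that the space-like unique continuation property (proved via parabolic Carleman estimates) is the underlying content and that the regularity hypotheses~\eqref{regularity} suffice. Your time-reversal reduction for $L^*$ is a sensible and correct way to spell out why the forward-parabolic result in~\cite{Tataru} also covers the backward-parabolic adjoint — a point the paper leaves implicit — and the observation that the reflection preserves the uniform ellipticity constant and the parabolic H\"older class is exactly what one needs for this reduction to be legitimate, so there is no gap.
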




\section{Some technical lemmas on the sweeping of poles and discretization} \label{s:lemmas}

In this section we state and prove some key technical lemmas that we
will need for the proof of Theorem~\ref{T1}. Many of the ideas were first introduced by Browder~\cite{Browder}. The first lemma concerns
the behavior of the fundamental solution $K(x,t;y,s)$ when one
perturbs a little the second pair of variables, $(y,s)$:

\begin{lemma}\label{lem:movepole}
Let $U$ be an open subset of $\RRt$ and let $(y,s)$ be a point in
$U$. Then for any $\ep>0$ there is an open
neighborhood~$B_{(y,s)}$ of $(y,s)$ in $U$ such that \begin{equation}\label{eq:diffK}
\|K(\cdot,\cdot;y,s)-K(\cdot,\cdot;y',s')\|_{r+2,U^\cc}<\ep
\end{equation}
for all $(y',s')\in B_{(y,s)}$.
\end{lemma}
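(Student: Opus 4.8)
The plan is to exploit the explicit pointwise bounds on the fundamental solution and its derivatives away from the diagonal, combined with a standard equicontinuity argument. Since $(y,s)\in U$, the closed set $U^\cc$ is at positive distance from $(y,s)$; fix a small closed neighborhood $\overline B$ of $(y,s)$ contained in $U$ and still at positive distance, say $2\rho>0$, from $U^\cc$. Then for every $(y',s')\in\overline B$ the point pairs $\big((x,t),(y',s')\big)$ with $(x,t)\in U^\cc$ stay in the region $|x-y'|+|t-s'|^{1/2}\geq \rho$, where by~\eqref{eq:Kbounds} the kernel $K$ and all its derivatives of parabolic order up to $\lfloor r\rfloor+2$ are smooth and uniformly bounded, with Gaussian decay in $|x-y'|$ and polynomial control in $|t-s'|$.

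The key step is then to estimate the difference $K(\cdot,\cdot;y,s)-K(\cdot,\cdot;y',s')$ in the norm $\|\cdot\|_{r+2,U^\cc}$. I would write each relevant derivative $D_x^\al\pd_t^k K(x,t;y',s')$ as a function of the parameters $(y',s')$ and apply the mean value theorem (or the fundamental theorem of calculus along the segment joining $(y,s)$ to $(y',s')$), picking up one extra $y$- or $s$-derivative of $K$; these are again controlled by~\eqref{eq:Kbounds}, uniformly over $(x,t)\in U^\cc$ and over the segment, since that segment lies in $\overline B$ and hence stays at distance $\geq\rho$ from $U^\cc$. This gives a bound of the form $C(\rho,r,U^\cc)\,\big(|y-y'|+|s-s'|\big)$ for the sup-norm part of the parabolic H\"older norm. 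For the H\"older seminorm part (the top-order term), one either differences the already-differenced kernel again or, more cleanly, observes that the relevant mixed derivatives are Lipschitz in their arguments on the region $\{|x-y'|+|t-s'|^{1/2}\geq\rho\}$ with constants again coming from~\eqref{eq:Kbounds}, so the H\"older seminorm of the difference over $U^\cc$ is likewise $O\big(|y-y'|+|s-s'|\big)$; if $U^\cc$ is unbounded one uses the Gaussian decay of $K$ to guarantee the supremum is attained on a compact portion and the bounds are genuinely uniform. Choosing $B_{(y,s)}\subset\overline B$ to be a ball of radius $\delta$ small enough that $C\cdot 2\delta<\ep$ finishes the argument.

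The main obstacle I anticipate is purely bookkeeping rather than conceptual: making sure the estimates are uniform in $(x,t)$ over a possibly unbounded set $U^\cc$, so that one really controls the full norm $\|\cdot\|_{r+2,U^\cc}$ and not just its restriction to compacta. The Gaussian factor $e^{-|x-y|^2/(C|t-s|)}$ in~\eqref{eq:Kbounds} handles spatial infinity, but one must be a little careful near $|t-s|\to\infty$, where the prefactor $|t-s|^{-(n+\cdots)/2}$ decays and so again causes no trouble, and near the ``time slice'' $s'=t$, which is harmless because that portion of $U^\cc$ is at spatial distance $\geq\rho$ from $(y',s')$ so $|x-y'|\geq\rho/2$ and the Gaussian kills everything. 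So the genuinely delicate point is simply to verify that the constant $C$ produced by differentiating in the parameters depends only on $\rho=\dist((y,s),U^\cc)$ (and on $r$, $n$, and the ellipticity and H\"older constants of $L$), not on $(y',s')$ individually, which follows because all the segments stay within $\overline B$. Once that uniformity is nailed down, the result is immediate.
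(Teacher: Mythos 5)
Your plan runs into a regularity budget problem at the top order of the norm. The pointwise estimate~\eqref{eq:Kbounds} is only stated for multi-indices with $|\al|+|\be|+2k+2l\leq \lfloor r\rfloor+2$, reflecting the fact that $K$ has joint parabolic regularity of exactly order $r+2$ off the diagonal (the coefficients of $L$ are merely $C^{r,\frac r2}$, so one cannot do better). To control $\|K(\cdot,\cdot;y,s)-K(\cdot,\cdot;y',s')\|_{r+2,U^\cc}$ you must handle derivatives $D_x^\al\pd_t^i$ with $2i+|\al|=\lfloor r\rfloor+2$, plus the H\"older seminorm at that order. Applying the mean value theorem in $(y,s)$ to such a derivative picks up an extra $D_y$ or $\pd_s$, landing you at joint parabolic order $\lfloor r\rfloor+3$ or $\lfloor r\rfloor+4$, which lies outside the range in which~\eqref{eq:Kbounds} is asserted. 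So the sentence ``these are again controlled by~\eqref{eq:Kbounds}'' is exactly where the argument breaks: the mixed parameter-derivatives you need at top order are not available, and the H\"older seminorm term is even more demanding.

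The paper sidesteps the budget problem by deliberately extracting only a $C^0$ bound on $\tilde U^\cc$ from the quantitative estimate and then bootstrapping. Since $w:=K(\cdot,\cdot;y,s)-K(\cdot,\cdot;y',s')$ solves $Lw=0$ in the complement of a slightly shrunk set $\tilde U\subset U$, interior Schauder estimates (applied over a uniform cover of $U^\cc$ by bounded domains, using that $U^\cc$ sits at positive distance inside $\tilde U^\cc$ and that the coefficients are uniformly in $C^{r,\frac r2}(\RR^{n+1})$) upgrade $\|w\|_{0,\tilde U^\cc}$ to $\|w\|_{r+2,U^\cc}$ at no cost in $(y,s)$-regularity of $K$. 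The $C^0$ bound itself is obtained softly: the Gaussian decay in~\eqref{eq:Kbounds} makes each $K(\cdot,\cdot;y',s')$ uniformly small outside a large compact set $\cK_\de$, and on the remaining compact piece $\cK_\de\setminus\tilde U$ one simply invokes continuity of $K$ in the parameter $(y',s')$, with no derivatives at all. If you retain your mean-value estimate but only at the $C^0$ level (where the single $(y,s)$-derivative you need is still within the range of~\eqref{eq:Kbounds}) and append the interior Schauder step, your proof would close and would in fact yield an explicit Lipschitz modulus in $(y',s')$ that the paper's soft continuity argument does not.
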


\begin{proof}
We can assume that $U$ is bounded without loss of generality. Let us take a proper open subset $\tilde U\subset U$ containing $(y,s)$.
By the estimate \eqref{eq:Kbounds} and the boundedness of $\tilde U$, for any $\de>0$ we can take a compact subset ~$\cK_\de\supset \tilde U$ of ${\RRt}$  such that
\begin{equation*}
\sup_{(y',s')\in \tilde U}\sup_{(x,t)\in \cK_\de^\cc}\big| K(x,t;y',s')\big|<\frac{\de}{2}\,.
\end{equation*}
On the other hand, since
$K(x,t;y',s')$ depends continuously on $(y',s')\in\RRt
\backslash\{(x,t)\}$ and $\cK_\de$ is compact, for each~$(y,s)\in
\tilde U$ there is a small neighborhood $B_{(y,s)}\subset  \tilde U$ of $(y,s)$ such that
\begin{equation*}
\sup_{(y',s')\in B_{(y,s)}}\sup_{(x,t)\in \cK_\de\backslash \tilde U}\big| K(x,t;y,s)-K(x,t;y',s')\big|<\de\,.
\end{equation*}
By the definition of the set $\cK_\de$,
\begin{equation}\label{unif1}
\|K(\cdot,\cdot;y,s)-K(\cdot,\cdot;y',s')\|_{0,  \tilde U^\cc}<\de
\end{equation}
for all $(y',s')\in B_{(y,s)}$.

It is clear that $K(x,t;y,s)- K(x,t;y',s')$, as a function of~$(x,t)$,
solves the parabolic PDE
\[
L\big(K(x,t;y,s)- K(x,t;y',s') \big)=0
\]
in $\tilde U^\cc$. As the closure of the bounded set~$\tilde U$ is
contained in~$U$ and the coefficients of~$L$ are
in~$C^{r,\frac r2}(\RR^{n+1})$, standard interior Schauder estimates
(applied to a uniform cover of~$\tilde U^\cc$ by bounded domains) then allow us
to promote the uniform bound~\eqref{unif1} to
\[
\| K(\cdot,\cdot;y,s)- K(\cdot,\cdot;y',s')\|_{r+2,U^\cc}\leq C
\big\|K(\cdot,\cdot;y,s)- K(\cdot,\cdot;y',s')\big\|_{0,  \tilde U^\cc}< C\de.
\]
The lemma then follows.
\end{proof}

The second lemma shows that, outside a spacetime domain~$U$ satisfying
a certain topological condition, one can approximate the fundamental
solution $K(\cdot,\cdot;y,s)$, understood as a function of the first
pair of variables with $(y,s)$ fixed, by a linear combination of
functions of the form $K(\cdot,\cdot;y_j,s_j)$, where the ``poles''
$(y_j,s_j)$ lie on a prescribed bounded domain contained in~$U$:

\begin{lemma}\label{lem:sweep}
 Let $U$ be a domain in $\RRt$ such that $U(t)$ is connected for all
 $t\in\RR$. Consider a point $(y,s)\in U$ and a bounded domain $\cK\subset U$ such that $\cK(s)\neq\emptyset$.
Then, for any $\ep >0$ there exists a finite set of points $\{(y_j, s_j)\}_{j=1}^J$ in $\cK$ and real constants $\{b_j\}_{j=1}^J\subset\RR$ such that
\begin{equation} \label{eq:sweep}
\bigg\| K(\cdot,\cdot; y,s)-\sum_{j=1}^Jb_j\, K(\cdot,\cdot; y_j, s_j)\bigg\|_{r+2, U^\cc}<\ep\,.
\end{equation}
\end{lemma}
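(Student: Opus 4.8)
The plan is to argue by a Hahn--Banach/duality argument, exactly in the spirit of Lax, Malgrange and Browder, reducing the approximation statement to a unique continuation property. Fix the point $(y,s)\in U$ and the bounded domain $\cK\subset U$. Consider the closed subspace
\[
V:=\overline{\myspan}\bigl\{K(\cdot,\cdot;y_j,s_j):(y_j,s_j)\in\cK\bigr\}\subset C^{r+2,\frac{r+2}2}(U^\cc)
\]
(closure in the parabolic H\"older norm $\|\cdot\|_{r+2,U^\cc}$). If $K(\cdot,\cdot;y,s)\notin V$, then by Hahn--Banach there is a continuous linear functional $\mu$ on $C^{r+2,\frac{r+2}2}(U^\cc)$ that annihilates $V$ but not $K(\cdot,\cdot;y,s)$. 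Such a functional can be realized as a compactly supported distribution $\mu$ of finite order in $U^\cc$ (after first restricting attention to a bounded piece of $U^\cc$, which suffices because of the Gaussian decay in \eqref{eq:Kbounds}). The goal is then to derive a contradiction.

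The key step is to define the potential
\[
w(y',s'):=\langle\mu_{(x,t)},\,K(x,t;y',s')\rangle ,
\]
a function of $(y',s')$ that is defined and smooth away from $\supp\mu\subset U^\cc$. Since $K^*(y',s';x,t)=K(x,t;y',s')$ and, as a function of its \emph{first} pair of variables, $K^*$ solves $L^*$-equation off the diagonal, differentiating under the pairing shows that $w$ satisfies $L^*w=0$ on $\RR^{n+1}\setminus\supp\mu$, in particular on all of $U$ (since $\supp\mu\subset U^\cc$). By hypothesis $\mu$ annihilates every $K(\cdot,\cdot;y_j,s_j)$ with pole in $\cK$, which says precisely that $w\equiv0$ on the open set $\cK$. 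Now one invokes the unique continuation principle for $L^*$ (Theorem \ref{thm:ucp}) on the connected-in-time domain $U$: since $w$ vanishes on the nonempty open set $\cK$ and the time-slices $U(t)$ are connected, $w$ must vanish on the \emph{horizontal component} of $\cK$ in $U$. The hypothesis $\cK(s)\neq\emptyset$ is exactly what guarantees that the point $(y,s)$ lies in that horizontal component: the slice $U(s)$ is connected and meets $\cK$, so $w(\cdot,s)\equiv0$ on $U(s)\ni y$. Hence $w(y,s)=\langle\mu,K(\cdot,\cdot;y,s)\rangle=0$, contradicting the choice of $\mu$. Therefore $K(\cdot,\cdot;y,s)\in V$, and unwinding the definition of $V$ gives, for every $\ep>0$, a finite linear combination $\sum_{j=1}^J b_j K(\cdot,\cdot;y_j,s_j)$ with poles $(y_j,s_j)\in\cK$ satisfying \eqref{eq:sweep}.

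I expect the main obstacle to be the technical bookkeeping that makes the duality argument rigorous: (i) reducing from the unbounded set $U^\cc$ to a bounded piece so that Hahn--Banach produces an honest compactly supported distribution, which uses the exponential spatial decay and the $t\to s$ blow-up rate in \eqref{eq:Kbounds} to control the tails uniformly for poles in the bounded set $\cK$; (ii) justifying that the pairing $w(y',s')=\langle\mu_{(x,t)},K(x,t;y',s')\rangle$ is smooth in $(y',s')$ off $\supp\mu$ and that $L^*$ may be applied under the pairing — here one uses that $\mu$ has finite order (matching the finiteness of derivatives controlled in \eqref{eq:Kbounds}, $|\al|+|\be|+2k+2l\le\lfloor r\rfloor+2$) together with the smoothness of $K$ off the diagonal; and (iii) checking that the notion of ``horizontal component'' in Theorem \ref{thm:ucp} interacts with the hypothesis $\cK(s)\neq\emptyset$ as claimed, i.e. that connectedness of each slice $U(t)$ propagates the vanishing of $w$ across the slice $t=s$ all the way to $(y,s)$. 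The causality $K(x,t;y',s')=0$ for $t<s'$ is a harmless feature here; it only means $w$ is automatically zero in part of $U^\cc$, and does not affect the argument on $U$.
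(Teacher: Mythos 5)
Your proposal is the same in spirit as the paper's proof: a Hahn--Banach duality argument reduced, via the adjoint potential $w(y',s') = \langle\mu, K(\cdot,\cdot;y',s')\rangle$, to the unique continuation property for $L^*$ and the connectedness of the slices $U(t)$. The hypothesis $\cK(s)\neq\emptyset$ is used exactly as you say.

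The one place your route differs, and where you would likely get stuck, is the choice of Banach space for the duality. You run Hahn--Banach directly in $C^{r+2,\frac{r+2}{2}}(U^\cc)$, which forces you to represent an arbitrary bounded functional on a parabolic H\"older space over an unbounded set as a compactly supported finite-order distribution; this is genuinely delicate (the dual of a H\"older space over an unbounded region is not well described, and in particular is not automatically realized by something whose pairing with $K$ yields a classical $L^*$-solution). Your remark that ``restricting to a bounded piece suffices because of the Gaussian decay in \eqref{eq:Kbounds}'' does not really carry the argument: the H\"older seminorm is a sup over the unbounded region, so Gaussian decay of $K$ does not by itself localize the functional. The paper sidesteps all of this by choosing a proper bounded subdomain $\tilde U\subset U$ containing both $(y,s)$ and $\cK$, and running Hahn--Banach in $C_0(\tilde U^\cc)$, whose dual is by Riesz--Markov the space of finite signed measures; the potential $F = \int K^*\,d\mu$ is then an honest $L^1_{\rm loc}$ function solving $L^*F=\mu$ in the distributional sense, hence $L^*F=0$ in $\tilde U$, and the unique continuation argument goes through without any dual-of-H\"older subtleties. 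This yields only a $C^0$ approximation on $\tilde U^\cc$, but since the difference $K(\cdot,\cdot;y,s)-v$ solves $L(\cdot)=0$ there, interior parabolic Schauder estimates upgrade the uniform bound on $\tilde U^\cc$ to the $C^{r+2,\frac r2+1}$ bound on the smaller set $U^\cc$. In short, the missing step in your outline is the two-tier structure: (a) do duality in $C_0$ of a slightly enlarged bounded set (where measures suffice), and (b) gain the H\"older estimate afterwards by Schauder; attempting duality directly at H\"older regularity creates technical problems the paper never has to face.
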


\begin{proof}
We assume $(y,s)$ does not belong to $\cK$, as otherwise the statement
is trivial. Let us take a proper bounded subdomain $\tilde U\subset U$
containing $(y,s)$ and $\cK$. We can assume that $\tilde U(t)$ is
connected for all~$t$.

Consider the space $\cV$ of  all finite linear combinations of the fundamental solution with poles belonging to $\cK$, i.e.
\[
\cV:=\text{span}_\RR\{K(\cdot,\cdot;z,\tau)\!: (z,\tau)\in \cK \}.
\]
Restricting these functions to the complement of $\tilde U$,
$\cV$ can be regarded as a subspace of the Banach space
$C_0(\tilde U^\cc) $ of bounded continuous functions on
$\tilde U^\cc$ that tend to zero at infinity.

By the Riesz--Markov theorem, the dual of
$C_0(\tilde U^\cc)$ is the space $\cM(\tilde U^\cc)$ of
finite regular signed Borel measures on $\RRt$ supported on $\tilde U^\cc$. Let us
take $\mu\in \cM(\tilde U^\cc)$ orthogonal to all functions in $\cV$, i.e.,
\begin{equation*}
\int v \, d\mu=0 \qquad \text{for all } v\in\cV.
\end{equation*}

Now we define a function $F\in L^1\loc(\RRt)$
as
\begin{equation}
F(x,t):=\int K^*(x, t;z,\tau)\, d\mu(z,\tau)
\end{equation}
where $K^*(x, t; y, s)$ is the fundamental solution of the adjoint equation \eqref{eq:L*}. Therefore, $F$ satisfies the equation
\[
L^*F=\mu
\]
in the sense of distributions,
which implies $L^*F=0$ in $ \tilde U$. In addtion, $F$ is identically
zero in $\cK$ because, for all $(x,t)\in \cK$,
\[
F(x,t)=\int K^*(x, t;z,\tau)\, d\mu(z,\tau)= \int K(z,\tau;x, t)\, d\mu(z,\tau)=0
\]
by the definition of the measure~$\mu$.

Hence, since $\tilde U(t)$ is connected, the Unique Continuation Theorem \ref{thm:ucp} ensures that the function $F$ vanishes on the horizontal components of $\cK$ in $ \tilde U$. Particularly, $F$ vanishes in $ \tilde U(s)$. It then follows that
\begin{equation*}
\int K(x,t;y,s)\, d\mu(x,t)=F(y_,s)=0.
\end{equation*}
This implies that $K(\cdot,\cdot;y,s)$ can be uniformly approximated in $C^0( \tilde U^\cc)$ by elements of the subspace $\cV$ as a consequence of the Hahn-Banach theorem.

Let us consider a function
\[
v:=\sum_{j=1}^Jb_j\, K(\cdot,\cdot; y_j, s_j)
\]
in $\cV$ such that
\[
\|K(\cdot, \cdot;y,s)-v\|_{C^0(\tilde U^\cc)}\leq \de
\]
for sufficiently small $\de$.
Since
$$
L\left(K(\cdot, \cdot;y,s)-v\right)=0
$$
in $\tilde U^\cc$, standard parabolic estimates allow us to promote the
above uniform bound to
\[
\|K(\cdot,\cdot; y,s)-v\|_{r+2,U^\cc}\leq C \| K(\cdot,\cdot;
y,s)-v\|_{0,\tilde U^\cc}\leq C\de\,.
\]
We then conclude the desired result.
\end{proof}

The third lemma shows that the ``convolution'' of the fundamental solution
and a compactly supported function can be approximated by a linear
combination of fundamental solution $K(x,t;y_j,s_j)$ with poles
$(y_j,s_j)$ lying on a certain spacetime region:

\begin{lemma}\label{lem:riemsum}
Let $\vp:\RRt\rightarrow\RR$ be an $L^1$ function of compact support.
For any neighborhood $U$ of $\supp \vp$ and $\de>0$ there exists a finite set of points
$\{(y_j, s_j)\}_{j=1}^J$ in $\supp \vp$ and constants $\{c_j\}_{j=1}^J\subset\RR$ such that
\begin{equation}\label{riemsum}
\bigg\|\int_{\RRt} K(\cdot,\cdot;y,s)\vp(y,s)\, dy \,ds-\sum_{j=1}^J c_j K(\cdot,\cdot;y_j, s_j)\bigg\|_{r+2, U^\cc}<\de
\end{equation}
\end{lemma}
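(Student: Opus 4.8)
The plan is to approximate the integral $\int K(\cdot,\cdot;y,s)\,\vp(y,s)\,dy\,ds$ by a Riemann-type sum and then promote the resulting uniform estimate to an estimate in the parabolic Hölder norm $\|\cdot\|_{r+2,U^\cc}$ exactly as in the two previous lemmas. First I would fix a bounded open set $\tilde U$ with $\supp\vp\subset\tilde U$ and $\,\overline{\tilde U}\subset U$; it suffices, by interior Schauder estimates applied to a uniform cover of $\tilde U^\cc$ by bounded domains (recall that $K(\cdot,\cdot;y,s)-\sum_j c_j K(\cdot,\cdot;y_j,s_j)$ solves $Lw=0$ in $\tilde U^\cc$), to produce points $(y_j,s_j)\in\supp\vp$ and constants $c_j$ with
\[
\Big\|\int_{\RRt} K(\cdot,\cdot;y,s)\,\vp(y,s)\,dy\,ds-\sum_{j=1}^J c_j\,K(\cdot,\cdot;y_j,s_j)\Big\|_{0,\tilde U^\cc}<\de'
\]
for $\de'$ small; the constant $C$ from the Schauder estimate then gives the claimed bound on $U^\cc$.

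The uniform estimate is obtained by a covering-and-sampling argument. Partition a large cube containing $\supp\vp$ into small disjoint cells $Q_1,\dots,Q_J$ of diameter at most $h$, pick any sample point $(y_j,s_j)\in Q_j\cap\supp\vp$ (discarding empty cells), and set $c_j:=\int_{Q_j}\vp(y,s)\,dy\,ds$. Then
\[
\int_{\RRt} K(x,t;y,s)\,\vp(y,s)\,dy\,ds-\sum_{j=1}^J c_j\,K(x,t;y_j,s_j)=\sum_{j=1}^J\int_{Q_j}\big(K(x,t;y,s)-K(x,t;y_j,s_j)\big)\vp(y,s)\,dy\,ds,
\]
so the left side is bounded in absolute value by $\sum_j\int_{Q_j}|K(x,t;y,s)-K(x,t;y_j,s_j)|\,|\vp(y,s)|\,dy\,ds$. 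For $(x,t)\in\tilde U^\cc$ the poles $(y,s),(y_j,s_j)$ stay in the compact set $\overline{\supp\vp}$ while $(x,t)$ ranges over the closed set $\tilde U^\cc$, which is bounded away from $\supp\vp$; on this region $K(x,t;y,s)$, together with its first $\lfloor r\rfloor+2$ derivatives, is bounded and Lipschitz in the pole variables $(y,s)$ by the estimate~\eqref{eq:Kbounds} (and by continuity of $K$ off the diagonal for the part of $\tilde U^\cc$ at moderate distance, plus the Gaussian decay of~\eqref{eq:Kbounds} for the part near spatial infinity, which forces the difference to be uniformly small there as well). Hence there is a modulus of continuity $\omega(h)\to0$ with $|K(x,t;y,s)-K(x,t;y_j,s_j)|\le\omega(h)$ whenever $(y,s),(y_j,s_j)$ lie in a common cell, uniformly in $(x,t)\in\tilde U^\cc$, and the whole sum is $\le\omega(h)\,\|\vp\|_{L^1}$. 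Choosing $h$ small makes this $<\de'$, completing the uniform estimate.

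The step I expect to require the most care is controlling the difference $K(x,t;y,s)-K(x,t;y_j,s_j)$ uniformly over the unbounded region $\tilde U^\cc$: continuity of $K$ off the diagonal only gives local uniformity, so one must use the Gaussian factor $e^{-|x-y|^2/(C|t-s|)}$ in~\eqref{eq:Kbounds} to handle $(x,t)$ far from $\supp\vp$, and split $\tilde U^\cc$ into a relatively compact annular part (where uniform continuity of $K$ applies directly) and a far-field part (where both $K(x,t;y,s)$ and $K(x,t;y_j,s_j)$ are individually tiny). This is entirely analogous to the argument already used in the proof of Lemma~\ref{lem:movepole}, where a compact set $\cK_\de$ was introduced precisely to separate these two regimes, so I would follow that template. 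Everything else — the partition into cells, the choice $c_j=\int_{Q_j}\vp$, and the final passage from the $C^0$ bound to the $C^{r+2,\frac{r}{2}+1}$ bound via parabolic interior estimates — is routine.
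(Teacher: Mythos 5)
Your argument is correct, but it takes a different internal route from the paper's. The paper does not go through a Riemann-sum over a cell decomposition with an explicit modulus of continuity; instead it covers the compact set $\supp\vp$ by finitely many of the neighborhoods $B_{(y_j,s_j)}$ supplied by Lemma~\ref{lem:movepole} (each chosen with threshold $\ep<\de/\|\vp\|_{L^1}$), takes a partition of unity $\chi_j$ subordinated to this cover, and sets $c_j:=\int\chi_j\vp$, $w:=\sum_j c_j K(\cdot,\cdot;y_j,s_j)$. Writing the error as $\sum_j\int\bigl(K(\cdot,\cdot;y,s)-K(\cdot,\cdot;y_j,s_j)\bigr)\chi_j\vp\,dy\,ds$ and pushing the norm through the integral, the $C^{r+2}$ bound follows in one stroke from~\eqref{eq:diffK}; no separate $C^0$ estimate or Schauder promotion step is needed because Lemma~\ref{lem:movepole} already delivers the estimate in the strong norm. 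Your proof essentially reimplements the content of Lemma~\ref{lem:movepole} inline — the split of $\tilde U^\cc$ into a relatively compact part (where uniform continuity of $K$ off the diagonal applies) and a far-field part (where the Gaussian tail of~\eqref{eq:Kbounds} makes $K$ uniformly small) is exactly what that lemma packages up, including the fact that it is the $C^0$ bound that is established first and then lifted by interior parabolic Schauder estimates. So both approaches are sound; the paper's is shorter and more modular because it outsources the unbounded-domain uniformity to the already-proven lemma, while yours is more self-contained but duplicates that work. One small wrinkle: in your uniform-continuity step you invoke control on derivatives of $K$ up to order $\lfloor r\rfloor+2$, but for the $C^0$ estimate you only need continuity of $K$ itself in the pole variables; the higher derivatives only enter when you pass to the Schauder step, so this part of the write-up conflates the two stages slightly, though it does not affect correctness.
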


\begin{proof}
Let  $U$ be a bounded open neighborhood of the support of~$\vp$.
Since the support of~$\vp$ is compact, we can take a finite collection of balls $\{B_{(y_j,s_j)}\}_{j=1}^J$ as in Lemma~\ref{lem:movepole}, centered at points
$\{(y_j,s_j)\}_{j=1}^J$ in $\supp \vp$ and with
$\ep<\frac{\de}{\|\vp\|_{L^1}}$, such that
$$
\supp \vp\subset \bigcup_{j=1}^J B_{(y_j,s_j)}\,.
$$
Let  $(\chi_j)_{j=1}^J\subset C^\infty_c(\RRt)$ be a partition of unity subordinated to $\{B_{(y_j,s_j)}\}_{j=1}^J$.
  Defining the function $w$ as
  \begin{equation*}
w(x,t):=\sum_{j=1}^J K(x,t;y_j,s_j)\int \chi_j(y,s)\vp(y,s)\, dy\, ds\,,
\end{equation*}
one can write
\begin{multline*}
\int K(x,t;y,s)\vp(y,s)\, dy\, ds-w(x,t)
\\ =\sum_{j=1}^J\int \bigg(K(x,t;y,s)-K(x,t;y_j,s_j)\bigg) \chi_j(y,s)\vp(y,s)\,dy \,ds
\end{multline*}
for any $(x,t)\in U^\cc$. Therefore, by Lemma~\ref{lem:movepole},
\begin{align*}
\bigg\|\int K(\cdot,\cdot;y,s)&\vp(y,s)\, dy\, ds-w\bigg\|_{r+2,U^\cc}\\&
\leq \sum_{j=1}^J\int \|K(\cdot,\cdot;y,s)-K(\cdot,\cdot;y_j,s_j)\|_{r+2,U^\cc} |\vp(y,s)|\chi_j(y,s)\, dy\, ds\\&
< \frac{\de}{\|\vp\|_{L^1}}\int|\vp(y,s)|\sum_{j=1}^J \chi_j(y,s)\, dy\, ds=\de.
\end{align*}
Estimate \eqref{riemsum} then follows taking
$c_j:=\int\chi_j(y,s)\,\vp(y,s)\, dy\,ds$.
\end{proof}

The next lemma in this section shows how the
fundamental solution $K(x,t;y,s)$ can be approximated in a certain spacetime region by a
function satisfying $Lw=0$ in the whole $\RR^{n+1}$:

\begin{lemma}\label{lem:sweepinf}
Let $D$ be an unbounded domain in $\Rn$. Consider the subset of $\RRt$
$U= D\times (T_0, T_1)$ and  a point $(y,s)\in U$.
Then for any $\de > 0$ there exists a function $w$ which satisfies $Lw=0$ in $\RRt$ such that
\begin{equation*}
\|K(\cdot,\cdot;y,s)-w\|_{r+2,U^\cc}<\de.
\end{equation*}
\end{lemma}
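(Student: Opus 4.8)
The plan is to run the pole off to spatial infinity along the unbounded domain~$D$ by iterating Lemma~\ref{lem:sweep}, in the spirit of Browder's original device for elliptic equations.

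Since $D$ is an unbounded domain in~$\RR^n$, I would first fix a proper continuous curve $\Gamma:[0,\infty)\to D$ with $\Gamma(0)=y$ and $|\Gamma(\tau)|\to\infty$, together with a chain of small pairwise-overlapping balls $B_0,B_1,B_2,\dots\subset U$ with $(y,s)\in B_0$ and $B_k$ centred (say) at $(\Gamma(k),s)$, so that consecutive balls meet and their union escapes to infinity. Note that $U(t)=D$ is connected for all~$t$, so Lemma~\ref{lem:sweep} applies to~$U$ with any bounded subdomain in the role of~$\cK$. Starting from $w_0:=K(\cdot,\cdot;y,s)$, I would then construct inductively a sequence of finite real linear combinations of fundamental solutions $w_k=\sum_j b^{(k)}_j\,K(\cdot,\cdot;p^{(k)}_j)$ whose poles $p^{(k)}_j$ all lie in~$B_k$: given~$w_k$, apply Lemma~\ref{lem:sweep} to each of its poles with $\cK:=B_{k+1}$ to obtain~$w_{k+1}$, with poles in~$B_{k+1}$, satisfying $\|w_k-w_{k+1}\|_{r+2,U^\cc}<\eta_{k+1}$, where $\sum_k\eta_k<\de/2$; telescoping yields $\|K(\cdot,\cdot;y,s)-w_k\|_{r+2,U^\cc}<\de/2$ for every~$k$.

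The heart of the matter is to pass to the limit and land on a genuine \emph{global} solution rather than on another function carrying poles. Since the poles of~$w_k$ all sit in~$B_k$ and $B_k$ marches off to infinity, on any fixed compact set $\cC\subset\RRt$ the increment $w_{k+1}-w_k$ is, once $k$ is large, a solution of $L(\cdot)=0$ near~$\cC$ built from fundamental solutions with poles at distance of order $\dist(B_k\cup B_{k+1},\cC)$ from~$\cC$, and \eqref{eq:Kbounds} makes each of these super-exponentially small on~$\cC$. The difficulty — and this is the only delicate point — is that this decay must overcome the growth of the coefficients $b^{(k)}_j$, on which Lemma~\ref{lem:sweep} by itself gives no information. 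I would resolve it by interleaving the construction with an exhaustion $\cC_1\subset\cC_2\subset\cdots$ of~$\RRt$: at the $(k{+}1)$-st step, \emph{after} $w_k$ (hence all of its data) has been fixed, choose the next ball~$B_{k+1}$ far enough out along~$\Gamma$ that the super-exponential decay of $K(\cdot,\cdot;p)$ on~$\cC_{k+1}$, for $p\in B_{k+1}$, dominates the now-determined coefficient sum of~$w_{k+1}$, ensuring in addition that $\|w_{k+1}-w_k\|_{r+2,\cC_{k+1}}<2^{-k}$.

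With both families of estimates in force, $(w_k)$ is Cauchy in $C^{r+2,\frac r2+1}\loc(\RRt)$; its limit~$w$ has had every pole carried off to infinity, so $Lw=0$ in all of~$\RRt$, while letting $k\to\infty$ in the telescoped bound gives $\|K(\cdot,\cdot;y,s)-w\|_{r+2,U^\cc}\le\de/2<\de$, as required. As in the proofs of Lemmas~\ref{lem:movepole}--\ref{lem:sweep}, the promotion from a $C^0$ bound to a $C^{r+2,\frac r2+1}$ bound at each step is handled by the interior Schauder estimates, and the remainder is bookkeeping of the errors~$\eta_k$ against the exhaustion.
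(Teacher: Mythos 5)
Your overall strategy — marching the pole off to spatial infinity along a proper curve by iterating Lemma~\ref{lem:sweep} — is exactly the one the paper uses, and your telescoping of the errors on~$U^\cc$ is fine. But the step you yourself flag as ``the only delicate point'' does not actually close: you propose to choose $B_{k+1}$ far enough out that the Gaussian decay of $K(\cdot,\cdot;p)$, $p\in B_{k+1}$, on the compact~$\cC_{k+1}$ ``dominates the now-determined coefficient sum of~$w_{k+1}$.'' This is circular. The coefficients $b^{(k+1)}_j$ are produced by Lemma~\ref{lem:sweep} only \emph{after} $B_{k+1}$ has been fixed, and they depend on that choice: as $B_{k+1}$ recedes, the functions $K(\cdot,\cdot;p)$ for $p\in B_{k+1}$ themselves become exponentially small on the part of $U^\cc$ near~$B_k$, where $w_k$ is $O(1)$, so the Hahn--Banach approximation can be forced to use exponentially large coefficients. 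Lemma~\ref{lem:sweep} gives no quantitative control whatsoever on $\sum_j|b^{(k+1)}_j|$, so there is no guarantee the decay wins, and there is no way to ``pre-select'' $B_{k+1}$ against a quantity it determines.

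The paper sidesteps the coefficient problem entirely by applying Lemma~\ref{lem:sweep} not to the big $U=D\times(T_0,T_1)$ but to a \emph{bounded} subdomain $W_m^T=W_m\times(T_0,T_1)\subset U$, with $W_m\subset D$ a bounded domain containing both the current pole cluster $\cK_{m-1}$ and the target cluster $\cK_m$. Lemma~\ref{lem:sweep} then gives $\|v_m-v_{m-1}\|_{r+2,(W_m^T)^\cc}<\de/2^m$, and the key point is that $(W_m^T)^\cc$ is the complement of a bounded set that escapes to infinity. Hence it contains $U^\cc$ (giving the final approximation on $U^\cc$) and, for each fixed compact $\cC\subset\RRt$, it contains $\cC$ once $m$ is large (giving uniform convergence in $C^{r+2,\frac r2+1}$ on compacts, so the limit has no poles and solves $Lw=0$ everywhere). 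No estimate on the coefficients $b_j$ is needed. To repair your argument, replace your use of $U$ in each application of Lemma~\ref{lem:sweep} by a bounded tube $W_{k+1}\times(T_0,T_1)$ containing $B_k\cup B_{k+1}$; everything you wrote about telescoping then goes through, and the decay estimate~\eqref{eq:Kbounds} becomes unnecessary.
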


\begin{proof}
As $D$ is unbounded, we can take a parametrized curve $z:[0,\infty)\rightarrow D$ without self-intersections such that
\begin{equation*}
z(0)=y, \qquad \lim_{\ell\rightarrow +\infty}|z(\ell)|=+\infty.
\end{equation*}
For each nonnegative integer $m$ we will denote by $\cK_m\subset D$ a
bounded open neighborhood of the point $z(m)$, and $W_m$ a bounded domain containing $\cK_m$ and $\cK_{m-1}$.

Let us consider the following subsets of $\RRt$
\begin{equation*}
\cK_m^T=\cK_m\times(T_0,T_1),\quad W_m^T=W_m\times(T_0,T_1),
\end{equation*}
and denote by $\cV_m$ the space of finite linear combinations of the fundamental solution with poles in $\cK_m^T$, i.e.
\[
\cV_m:=\text{span}_\RR\{K(x,t;z,\tau):(z,\tau)\in \cK_m^T\}.
\]

As $W_1^T(t)$ is connected for all~$t$ and $K_1^T(s)$ is nonempty, by Lemma \ref{lem:sweep} there exists $v_1\in\cV_1$ such that
\begin{equation*}
\|K(\cdot,\cdot;y,s)-v_1\|_{r+2,(W_1^T)^\cc}<\frac{\de}{2}.
\end{equation*}
Since $v_1$ is a finite linear combination of fundamental solutions
with poles in $\cK_1^T$, using Lemma \ref{lem:sweep} again it can be inductively shown that there are functions $v_m\in\cV_m$ such that
\begin{equation*}
\|v_m-v_{m-1}\|_{r+2,(W_m^{T})^\cc}<\frac{\de}{2^m},
\end{equation*}
for all $m\geq 2$. Since the distance between the point $y$ and $W_m$
tends to infinity as $m\rightarrow \infty$, by a standard argument this implies
that $v_m$ converges $C^{r+2,\frac r2+1}$- uniformly on compact sets
to a function $w$ that solves
$$
Lw=0
$$
in $\RRt$.

To finish the proof, it only remains to observe that
\begin{align*}
\|K(\cdot,\cdot;y,s)-w\|_{r+2,U^\cc}&\leq
                                                 \|K(\cdot,\cdot;y,s)-v_1\|_{r+2,U^\cc}
                                                 +\sum_{m=2}^\infty \|v_m-v_{m-1}\|_{r+2,U^\cc}
\\&<\sum_{m=1}^\infty \frac{\de}{2^m}=\de\,,
\end{align*}
where we have used the estimates derived above and the definition of~$w$.
\end{proof}

Finally, the last lemma in this section is a preliminary global
approximation result that we can readily prove using the previous
results in this section and which will be instrumental in the proof of Theorem~\ref{T1}:

\begin{lemma}\label{lem:xtbdpoles}
Let $v$  be a function that satisfies the parabolic equation $Lv=0$ on a compact set $ U\subset \RRt$ such that
$U^\cc(t)$ is connected for all~$t$.
Then for any $\de>0$  there exists a function $w$ satisfying $Lw=0$ in  $\RRt$  and such that it approximates $v$ as
\begin{equation*}
\|v-w\|_{r+2, U}< \de.
\end{equation*}

\end{lemma}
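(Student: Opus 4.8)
The plan is to follow the classical Lax--Malgrange--Browder duality scheme, now adapted to the parabolic setting using the fundamental solution $K$ and the technical lemmas proved above. The key point is that the space of global solutions of $Lu=0$, restricted to the compact set $U$, is dense in the space of local solutions in the $C^{r+2,\frac r2+1}(U)$ topology; this is a Hahn--Banach argument once we know that every continuous linear functional on $C^{r+2,\frac r2+1}(U)$ that annihilates all global solutions also annihilates every local solution.

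\emph{Step 1 (reduction to a duality statement).} Let $X$ be the closure in $C^{r+2,\frac r2+1}(U)$ of the set $\{u|_U : Lu=0 \text{ in } \RRt\}$, a closed subspace. If $v\in X$ we are done, so suppose not; by Hahn--Banach there is a functional $\Lambda\in (C^{r+2,\frac r2+1}(U))^*$ vanishing on $X$ but with $\Lambda(v)\neq 0$. We must derive a contradiction. Since $U$ is compact, such a functional can be represented by integration against a distribution of finite order supported in $U$; equivalently there is a compactly supported distribution $\mu$ (of order at most $\lfloor r\rfloor+2$) on $\RRt$, with $\supp\mu\subset U$, such that $\Lambda(\phi)=\langle \mu,\phi\rangle$ for all $\phi$ in a neighborhood-sense. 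Smoothing $\mu$ slightly if convenient, one may even take $\mu$ to be an $L^1$ function of compact support near $U$ (this is the standard mollification trick and is where we will use that $U$ is compact and that the approximation need only be up to $\de$).

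\emph{Step 2 (propagating the annihilation via the adjoint fundamental solution).} Form
\[
F(x,t):=\int K^*(x,t;z,\tau)\,d\mu(z,\tau)=\int K(z,\tau;x,t)\,d\mu(z,\tau),
\]
which satisfies $L^*F=\mu$ in the sense of distributions, hence $L^*F=0$ outside $\supp\mu$. The hypothesis $\Lambda(u)=0$ for every global solution $u$ means, applied to $u=K(\cdot,\cdot;y,s)$ with $(y,s)$ ranging over $U^\cc$ (these are genuine global solutions of $Lu=0$ in a neighborhood of $U$ when $(y,s)\notin U$), that $F(y,s)=0$ for all $(y,s)\in U^\cc$. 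So $F$ vanishes on the unbounded complement of a compact set. Now pick a point $(y_0,s_0)\in U^\cc$ in a slice $U^\cc(s_0)$ that meets $\supp\mu$'s time-projection; because $U^\cc(t)$ is connected for every $t$, the Unique Continuation Theorem~\ref{thm:ucp} forces $F$ to vanish on the whole horizontal component of $U^\cc$ inside any connected open set where $L^*F=0$ — in particular $F\equiv 0$ on all of $\RRt\setminus\supp\mu$, and by continuity up to relevant slices this suffices.

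\emph{Step 3 (from $F\equiv0$ outside $\supp\mu$ to $\Lambda(v)=0$).} This is the heart and the main obstacle: we must show $F=0$ off $\supp\mu$ implies $\langle\mu,v\rangle=0$ for every \emph{local} solution $v$ of $Lv=0$ near $U$. The idea is that, $v$ being a solution in a neighborhood of the compact set $U\supset\supp\mu$, one can represent $v$ near $\supp\mu$ by a parabolic Green-type formula so that pairing with $\mu$ is converted, after integration by parts, into pairing $v$ against $L^*F$-type boundary/support data, all of which vanish because $F$ vanishes off $\supp\mu$. Concretely: choose a slightly larger compact set $U'$ with $\supp\mu\subset\Int U\subset U\subset\Int U'$ and a cutoff; write $v=\int K(\cdot,\cdot;y,s)\,g(y,s)\,dy\,ds$ plus a global solution, where $g$ is supported in a thin shell between $\partial U$ and $\partial U'$ (so that on $\supp\mu$ the representation is exact). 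Then
\[
\langle\mu,v\rangle=\int\!\!\int K(y,s;z,\tau)\,g(y,s)\,dy\,ds\,d\mu(z,\tau)=\int F(y,s)\,g(y,s)\,dy\,ds,
\]
and since $\supp g$ lies in $U^\cc$ where $F\equiv0$, this integral vanishes; adding the global-solution part contributes $0$ because $\Lambda$ kills global solutions. Hence $\Lambda(v)=0$, contradicting $\Lambda(v)\neq0$. Producing the decomposition $v=(\text{convolution with }g)+(\text{global solution})$ with $\supp g$ in the shell is exactly the content one bootstraps from Lemmas~\ref{lem:sweep}, \ref{lem:riemsum} and \ref{lem:sweepinf}: those lemmas let us sweep the poles of $K(\cdot,\cdot;y,s)$ (which would naturally sit inside $\supp\mu$) out to the shell $U^\cc$ and, via $v$'s own representation near $U$, reorganize everything into a genuine global solution plus a $K$-convolution with density supported where $F$ vanishes. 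The delicate point, and the one requiring care, is keeping all pole-sweeping inside the region where $F$ is known to vanish while simultaneously staying in a set where $v$ is honestly a solution, i.e. correctly orchestrating the nested sets $\supp\mu\subset U\subset U'$ and the connectedness hypothesis on the time slices.

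\emph{Step 4 (conclusion).} The contradiction in Step 3 shows $X=C^{r+2,\frac r2+1}(U)$-closure contains $v$; unwinding, for every $\de>0$ there is a global solution $w$ of $Lw=0$ in $\RRt$ with $\|v-w\|_{r+2,U}<\de$, which is the assertion of the lemma. I expect Step 3 — correctly converting ``$F$ vanishes off $\supp\mu$'' into ``$\mu$ annihilates local solutions,'' i.e. choosing the shell decomposition and invoking the sweeping lemmas in the right order — to be the main technical obstacle; Steps 1, 2 and 4 are standard functional-analytic bookkeeping given Theorem~\ref{thm:ucp} and the bounds~\eqref{eq:Kbounds}.
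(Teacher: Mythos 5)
Your approach is a genuine Lax--Malgrange-style duality argument, whereas the paper's proof of this lemma is a direct construction: take a cutoff $\chi$, write $\tv:=\chi v$, use the exact identity $\tv=\int K(\cdot,\cdot;y,s)\,L\tv(y,s)\,dy\,ds$, apply Lemma~\ref{lem:riemsum} to discretize this into finitely many poles $(y_j,s_j)\in\supp L\tv\subset U^\cc$, and then use Lemma~\ref{lem:sweepinf} (via the connectedness of $U^\cc(t)$) to sweep each pole to infinity. The paper's Hahn--Banach step is confined inside Lemma~\ref{lem:sweep}, at the $C^0$ level, and H\"older norms are recovered afterwards by Schauder estimates; your proposal works directly with the dual of the parabolic H\"older space, which is considerably more delicate.

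There is, however, a real gap in your Step 2. You assert that $F(y,s)=\langle\mu, K(\cdot,\cdot;y,s)|_U\rangle=0$ for $(y,s)\in U^\cc$ because ``$K(\cdot,\cdot;y,s)$ is a global solution of $Lu=0$'' -- but it is not: $K(\cdot,\cdot;y,s)$ has a pole at $(y,s)$ and solves $Lu=0$ only away from $(y,s)$, so only in a neighbourhood of $U$, not on $\RRt$. The functional $\Lambda$ is only known to annihilate restrictions of \emph{global} solutions. To conclude $\Lambda\bigl(K(\cdot,\cdot;y,s)|_U\bigr)=0$ you would need $K(\cdot,\cdot;y,s)|_U$ to lie in the closure $X$ of global solutions on $U$, and establishing this is precisely the content of Lemma~\ref{lem:sweepinf} together with the topological hypothesis that $U^\cc(t)$ is connected (hence unbounded, since $U$ is compact) for every $t$. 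So the pole-sweeping machinery you relegate to Step 3 is in fact indispensable already in Step 2; without it Step 2 is circular. Once that is repaired, Step 3 is actually simpler than you make it: the identity $\tv=\chi v=\int K\,L\tv$ is \emph{exact} (no additional ``global solution'' summand is needed), so $\langle\mu,v\rangle=\int F(y,s)\,L\tv(y,s)\,dy\,ds$ vanishes because $\supp L\tv\subset U^\cc$, where $F\equiv 0$. Finally, your suggestion to mollify $\mu$ into an $L^1$ function is problematic, since mollification enlarges the support of $\mu$ beyond $U$, which ruins the relation $\supp\mu\subset U$ on which the whole pairing argument rests; the paper avoids this by never leaving the measure/$C^0$ duality inside Lemma~\ref{lem:sweep}.
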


\begin{proof}
Let $\tilde  U$ be an open neighborhood of $ U$ where $Lv=0$. Take a
smooth function $\chi:\RRt\rightarrow\RR$ equal to 1 in a closed set $
U'\subset\tilde  U$ whose interior contains   $ U$ and identically
zero outside $\tilde  U$. We define a smooth extension $\tv$ of the
function $v$ to $\RRt$ by  $\tv:=\chi v$. Since $\tv$ is compactly
supported in spacetime, it follows from standard uniqueness results that
\begin{equation*}
\tv(x,t):=\int K(x,t;y,s)L\tv(y,s) \,dy \,ds\,.
\end{equation*}

The support of $L\tv$ is contained in  $\tilde  U\backslash  U'$.
Then Lemma~\ref{lem:riemsum} ensures we can approximate $\tv$ outside $\tilde  U\backslash  U'$ by a finite combination of fundamental solutions with poles in $\supp L\tv$. In particular, we choose $\{(y_j, s_j)\}_{j=1}^J \subset\supp L\tv$ and real constants $\{c_j\}_{j=1}^J$ such that
\begin{equation*}
\bigg\|\tv-\sum_{j=1}^J c_j K(\cdot,\cdot;y_j,s_j)\bigg\|_{r+2, U}<\frac{\de}{2}.
\end{equation*}

Since $U^\cc(t)$ is connected for all~$t$, for each index $1\leq j\leq J$ there exists an unbounded
domain $D_j\subset\RR^n$ and a finite interval $(T_{0j},T_{1j})$ such that
$(y_j,s_j)\in D_j\times (T_{0j},T_{1j})$ and $D_j\times (T_{0j},T_{1j})\subset
U^\cc$. We can now apply Lemma \ref{lem:sweepinf} to each point
$(y_j,s_j)$, thereby obtaining a function $w$ that satisfies $Lw=0$
in $\RRt$ and such that
\begin{equation*}
\bigg\|\sum_{j=1}^J c_j K(\cdot,\cdot;y_j,s_j)-w\bigg\|_{r+2, U}<\frac{\de}{2}.
\end{equation*}
One then has
\begin{equation*}
\|w-v\|_{r+2, U}=\|\tv-w'\|_{r+2, U}< \de\,,
\end{equation*}
where we have used that $v=\tv$ in~$U$ and the previous bounds.
\end{proof}

\section{Proof of Theorem~\ref{T1}}
\label{S.T1}

Let $\tOm$ be an open neighborhood of $\Om$ where $Lv=0$. Let us take
a smooth function $\chi:\RRt\rightarrow \RR$ that is equal to 1 in  a
closed set $\Om'\subset\tOm$ whose interior contains $\Om$ and is identically zero outside $\tOm$. We define a smooth extension $\tv$ of $v$ to $\RRt$ by $\tv:=\chi v$.

Let $ U_1\subset U_2\subset\dots\subset \RRt$ be an exhaustion of
$\RRt$ by bounded spacetime domains. For concreteness,
one can take $U_m:=B_m\times (-m,m)$. Removing the first sets if
necessary, there is no loss of
generality in assuming that the intersection $U_1\cap
(\RRt\backslash\Om)$ is nonempty. Let $\{\vp_m: \RRt\rightarrow [0,1]\}_{m=1}^\infty$ be a partition of
unity subordinated to $U_{m+1}\backslash U_{m-1}$, where
we have set $U_0:=\emptyset$. (That is, the
support of $\vp_m$ is contained in $U_{m+1}\backslash U_{m-1}$ and
$\sum_{m=1}^\infty \vp_m(x,t)=1$ for all~$(x,t)\in\RR^{n+1}$.)

Consider the  functions
\begin{equation*}
\tv_m(x,t):=\int_{\RR^{n+1}} K(x,t;y,s)\, \vp_m (y,s) \, L\tv(y,s)\, dy\, ds.
\end{equation*}
By the definition of the fundamental solution, it is clear that
\begin{equation*}
L\bigg(\tv-\sum_{m=1}^{M+1}\tv_m\bigg)=0 \text{ in a neighborhood of } U_M.
\end{equation*}
By Lemma~\ref{lem:xtbdpoles} there exists a function $w_1$ satisfying
$Lw_1=0$ on $\RR^{n+1}$ and such that
\[
\|\tv-\tv_1-\tv_2-w_1\|_{r+2,U_1}<1\,.
\]
For a general $M\geq2$, one can now inductively apply
Lemma~\ref{lem:xtbdpoles} to the function
\[
\tv-\sum_{m=1}^{M+1}\tv_m-\sum_{m=1}^{M-1}w_m
\]
to show that there is a function $w_M$ satisfying $Lw_M=0$ on $\RRt$ and such that
\begin{equation}\label{errorM}
\Bigg\|\tv-\sum_{m=1}^{M+1}\tv_m-\sum_{m=1}^{M} w_m\Bigg\|_{r+2,U_M}<\frac{1}{M}\,.
\end{equation}
Therefore the function $\tv$ can be expressed in terms of the
functions $w_m$ as
\begin{equation}\label{eq:tv2}
\tv=\sum_{m=1}^\infty (\tv_m+w_m),
\end{equation}
the convergence being uniform in $C^{r+2,\frac r 2+1}$ on compact subsets of $\RRt$:


Let us now approximate $\tv_m$ by functions $\tw_m$ satisfying
$L\tw_m=0$ in $\RRt$. Since the support of~$\vp_mL\tv$ is contained in
$$
(\tOm\backslash\Om')\cap(U_{m+1}\backslash
U_{m-1}),
$$
which is in turn a bounded subset whose complement contains $\Om\cup U_{m-1}$,
we can apply Lemma~\ref{lem:riemsum} to
construct linear combinations of the form
\[
\hv_m(x,t)=\sum_{j=1}^{J_m}c_{m,j}\, K(x,t;y_{m,j},s_{m,j})
\]
that approximate $\tv_m$ as
\begin{equation}\label{rollo1}
\|\hv_m-\tv_m\|_{r+2, \, \Om\cup U_{m-1}}\leq \frac{\de}{2^{m+1}}.
\end{equation}
Here $\{(y_{m,j},s_{m,j})\}_{j=1}^{J_m}$ is a finite set of points contained in the support of $\vp_m\,
L\tv$ and $c_{m,j}$ are real constants.

We will now apply Lemma \ref{lem:sweepinf} in order to sweep the poles of $\hv_m$ to infinity.
For each~$m$, let us consider an open set $W_m\subset (U_{m-1} \cup
\Om)^\cc$
having $J_m$ connected components of the form
$D_{m,j}\times(T_{0m,j},T_{1m,j})$, where $D_{m,j}$ is an unbounded
domain in $\Rn$ and
\[
D_{m,j}\times(T_{0m,j},T_{1m,j})\cap \bigcup_{j'=1}^{J_m}\{(y_{m,j'},s_{m,j'})\}=\{(y_{m,j},s_{m,j})\}\,.
\]
Notice that it is possible to choose a set $W_m$ as above because
$(y_{m,j'},s_{m,j'})\in U_{m-1}^\cc$ for all $1\leq j'\leq
J_m$. Lemma~\ref{lem:sweepinf} then ensures the existence of  functions $\hw_m$ satisfying $L\hw_m=0$ in $\RRt$ which approximate $\hv_m$ as
\begin{equation}\label{rollo2}
\|\hw_m-\hv_m\|_{r+2,W_m^\cc}\leq\frac{\de}{2^{m+1}}.
\end{equation}

Our goal now is to define a function $u$ as
\begin{equation}\label{eq:w2}
u:=\sum_{m=1}^\infty (w_m+\hw_m).
\end{equation}
To show that this makes sense, we will next prove that the sum on the
right hand side converges $C^{r+2,\frac r 2+1}$- uniformly on compact
subsets of $\RRt$. In order to this, given any compact spacetime
domain $\cK\subset\RR^{n+1}$ let us take an integer $m_0$ such that
$\cK$ is contained in $U_{m_0-1}$ and the intersection $W_m\cap \cK$
is empty for all $m>m_0$. Then for any $M>m_0$ we have the
following estimate on the compact set $\cK$:
\begin{align*}
\bigg\|\sum_{m=m_0}^Mw_m+\sum_{m=m_0}^{M+1}\hw_m\bigg\|_{r+2,\cK}&\leq
\sum_{m=m_0}^{M+1}\big\|\hw_m-\hv_m\big\|_{r+2,\cK}+\sum_{m=m_0}^{M+1}\big\|\hv_m-\tv_m\big\|_{r+2,\cK}\\
&\qquad \qquad \qquad \qquad +\bigg\|\sum_{m=m_0}^{M+1}\tv_m+\sum_{m=m_0}^M w_m\bigg\|_{r+2,\cK}\\
& < \frac{\de}{2}+
  \frac{\de}{2}+\bigg\|\tv-\sum_{m=1}^{M+1}\tv_m-\sum_{m=1}^M w_m\bigg\|_{r+2,\cK}+\|\tv\|_{r+2,\cK}\\
&\qquad \qquad \qquad \qquad \qquad \qquad+\bigg\|\sum_{m=1}^{m_0-1}(\tv_m+w_m)\bigg\|_{r+2,\cK}\\
&\leq \de+\frac{1}{M}+\|\tv\|_{r+2,\cK}+\bigg\|\sum_{m=1}^{m_0-1}(\tv_m+w_m)\bigg\|_{r+2,\cK}.
\end{align*}
Here we have used Equations~\eqref{eq:tv2}--\eqref{rollo2} to pass to
the second line and Equation~\eqref{errorM} to pass to the third. The uniform convergence of the sum on compact sets then
follows. Notice that, by uniform convergence, we immediately infer that $Lu=0$ on $\RRt$.

Finally, from this one can easily show that $u$ approximates $v$ in $\Om$ in the $C^{r+2,\frac r 2+1}(\Om)$ norm:
\begin{align*}
\|u-v\|_{r+2,\Om}&=\|u-\tv\|_{r+2,\Om}\\
&\leq \sum_{m=1}^\infty \| \hw_m-\tv_m\|_{r+2,\Om}\\
&\leq\sum_{m=1}^\infty\bigg(\|\hw_m-\hv_m\|_{r+2,\Om}+\|\hv_m-\tv_m\|_{r+2,\Om}\bigg)\\
&<2\sum_{m=1}^\infty \frac{\de}{2^{m+1}}=\de.
\end{align*}
The theorem then follows.

\begin{remark}\label{R.necessary}
It is classical~\cite{Jones} that the topological condition that
$\Om^\cc(t)$ has no bounded components for all~$t$ (which, when~$\Om$
is compact, is equivalent to saying that $\Om^\cc(t)$ is connected) is necessary, as shown through the
analysis of the heat equation.
\end{remark}

\section{A better-than-uniform approximation lemma}
\label{S.btu}

In this section we present a technical refinement of Theorem~\ref{T1}
which asserts that, when one has a solution of the equation $Lv=0$ defined
on a locally finite union of compact subsets of~$\RR^{n+1}$
(satisfying certain topological conditions), then the approximation by
a global solution of the equation can be better than
uniform. Specifically, the error in this approximation can be chosen
to decrease as fast as one wishes as one goes to infinity. This
refinement will be key in the proof of our results on the movement of
local hot spots.

Before stating the result, let us recall that $\Om
:=\bigcup_{i=1}^\infty\Om_i\subset\RR^{n+1}$ is the {\em locally
  finite}\/ union of the compact subsets~$\Om_i$ if, given any other compact set $\cK\subset\RR^{n+1}$,
the number of subsets~$\Om_i$ that intersect~$\cK$ is always finite.

\begin{lemma}\label{L.technical}
Let the closed set $\Om:=\bigcup_{i=1}^\infty\Om_i$ be a locally finite union of pairwise
disjoint compact connected subsets~$\Om_i$ of~$\RR^{n+1}$ such that the complements of $\Om_i(t)$ are
connected for all $t\in\RR$, and let the function~$v$ satisfy the equation $L v=0$ on~$\Om$. Then one can find a better-than-uniform approximation of~$v$ by
a global solution of the equation, i.e., for any sequence of positive
constants $\{\de_i\}_{i=1}^\infty$ there is a function~$u$
that satisfies the equation~$L u=0$ on~$\RR^{n+1}$ and such that
\begin{equation*}
\|u-v\|_{r+2,\Om_i}<\de_i
\end{equation*}
for all~$i$.
\end{lemma}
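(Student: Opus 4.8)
The plan is to construct $u$ as an infinite series $u=\sum_{k=1}^\infty g_k$ of global solutions of $Lg_k=0$, where each~$g_k$ is a correction that essentially cancels the error of the partial sum $h_{k-1}:=g_1+\dots+g_{k-1}$ on the $k$-th piece~$\Om_k$ while barely disturbing the earlier pieces; the content of the lemma is that this can be arranged so that the errors on the~$\Om_i$ decay exactly as prescribed while the series still converges on compact subsets of~$\RRt$. Before starting, note that local finiteness forces the~$\Om_i$ to escape to infinity (every compact set meets only finitely many of them), that~$\Om$ is closed, and that~$\Om^\cc(t)$ has no bounded connected component for any~$t$: a finite union of pairwise disjoint compacta of~$\Rn$ with connected complements has connected complement, and adjoining the remaining, far-away pieces cannot create a bounded component. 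Fix an exhaustion $U_1\subset U_2\subset\cdots$ of~$\RRt$ by bounded spacetime domains.

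\emph{The iteration.} Set $h_0:=0$, and assume the global solution~$h_{k-1}$ has been constructed, so that $e_{k-1}:=v-h_{k-1}$ satisfies $Le_{k-1}=0$ in a neighborhood of~$\Om$. Let~$m_k$ be the largest index with $\overline{U_{m_k}}\cap\Om_k=\emptyset$ (and $m_k:=0$ if there is none); since the~$\Om_k$ escape to infinity, $m_k\to\infty$ as $k\to\infty$. Using local finiteness, choose $\eta_k\in C^\infty_c(\RRt)$ equal to~$1$ near~$\Om_k$ and supported in a neighborhood~$N_k$ of~$\Om_k$ so small that~$\overline{N_k}$ is disjoint from every~$\Om_j$ with $j\neq k$ and from~$\overline{U_{m_k}}$. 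Then $\phi_k:=\eta_k\,e_{k-1}$ is smooth and compactly supported, equals~$e_{k-1}$ near~$\Om_k$, vanishes on $\bigcup_{j\neq k}\Om_j$ and on~$U_{m_k}$, and $L\phi_k$ is supported in a compact ``collar''~$C_k\subset N_k$ lying at positive distance from~$\Om$ and from~$\overline{U_{m_k}}$. Since $\phi_k$ has compact support, $\phi_k=\int K(\cdot\,;y,s)\,L\phi_k(y,s)\,dy\,ds$ by uniqueness, so Lemma~\ref{lem:riemsum} provides a finite combination $\hv_k=\sum_j c_{k,j}\,K(\cdot\,;y_{k,j},s_{k,j})$ with poles in~$C_k$ approximating~$\phi_k$ to arbitrary accuracy on the complement of a small neighborhood of~$C_k$ —a set that contains~$\Om$ and~$U_{m_k}$. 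Finally, applying Lemma~\ref{lem:sweepinf} pole by pole (as in the proof of Lemma~\ref{lem:xtbdpoles}) to push the poles off to infinity inside $(\Om\cup U_{m_k})^\cc$, we obtain a global solution~$g_k$ with $\|g_k-\hv_k\|_{r+2,\,\Om\cup U_{m_k}}$ as small as we wish. Combining the two estimates and recalling the values of~$\phi_k$ on~$\Om_k$, on the other~$\Om_j$, and on~$U_{m_k}$, we get, for a tolerance $\tau_k>0$ to be fixed below,
\[
\|g_k-e_{k-1}\|_{r+2,\Om_k}<\tau_k,\qquad \|g_k\|_{r+2,\Om_j}<\tau_k\ \ (j\ne k),\qquad \|g_k\|_{r+2,U_{m_k}}<\tau_k .
\]

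\emph{Convergence, bookkeeping, and the main obstacle.} Take $\tau_k:=2^{-k-1}\min(\de_1,\dots,\de_k)>0$ and put $u:=\sum_{k=1}^\infty g_k$. Given a compact $\cK\subset\RRt$, choose~$M$ with $\cK\subset U_M$; since $m_k\to\infty$, for all large~$k$ one has $\cK\subset U_{m_k}$, hence $\|g_k\|_{r+2,\cK}<\tau_k$, and since $\sum_k\tau_k<\infty$ the series converges in $C^{r+2,\frac r2+1}(\cK)$; thus~$u$ is well defined and $Lu=0$ on~$\RRt$. On~$\Om_i$ we have $v-h_i=e_{i-1}-g_i$, so $\|v-h_i\|_{r+2,\Om_i}<\tau_i$, while $\|g_k\|_{r+2,\Om_i}<\tau_k$ for $k>i$, whence
\[
\|u-v\|_{r+2,\Om_i}\leq\|v-h_i\|_{r+2,\Om_i}+\sum_{k>i}\|g_k\|_{r+2,\Om_i}<\sum_{k\geq i}\tau_k\leq\de_i\sum_{k\geq i}2^{-k-1}<\de_i ,
\]
as required. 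I expect the genuinely delicate step to be not this geometric-series bookkeeping but the sweeping in the last step of the iteration: the corrections~$g_k$ are a priori controlled only on~$\Om$, so to make the series converge away from~$\Om$ one is forced to localize~$e_{k-1}$ with the cutoff~$\eta_k$ and to sweep the resulting poles to infinity, and one must verify that these poles can indeed be routed to infinity inside $(\Om\cup U_{m_k})^\cc$ —that is, avoiding both~$\Om$ and the growing protected region~$U_{m_k}$— which is the point-set ingredient resting on the hypothesis that each~$\Om_i(t)^\cc$ is connected together with the local finiteness of the union, in the spirit of the routing arguments in the proofs of Lemmas~\ref{lem:sweepinf} and~\ref{lem:xtbdpoles}.
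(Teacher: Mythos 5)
Your overall strategy — build $u=\sum g_k$ where each $g_k$ kills the residual on $\Om_k$ while staying tiny on the other pieces and on a growing protected compact — is a valid and in fact cleaner reorganization of the paper's argument, which instead runs a three-part induction (estimates~\eqref{ind1}--\eqref{ind3}) with a piecewise-defined intermediate function $w_{J'}$ fed to Lemma~\ref{lem:xtbdpoles}. Your cutoff-then-sweep construction of each correction, and the geometric-series bookkeeping at the end, are both fine as written.

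But there is a genuine gap exactly at the step you flag as ``delicate,'' and it is not merely an unchecked verification: the claim you would need is false for a generic exhaustion. To apply Lemma~\ref{lem:sweepinf} to a pole $(y,s)\in C_k$ you must exhibit a cylinder $D\times(T_0,T_1)$ with $D$ unbounded, containing $(y,s)$, and disjoint from $\Om\cup U_{m_k}$. This forces $y$ to lie in an \emph{unbounded} component of $(\Om\cup U_{m_k})^\cc(t)$ for $t$ near $s$. The hypotheses only say that each $\Om_i(t)^\cc$ is connected and that the union is locally finite; they do not prevent $\Om(t)\cup U_{m_k}(t)$ from separating $\RR^n$. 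Indeed $U_{m_k}(t)$ is a ball that may straddle $\pd\Om_j(t)$ for some $j\ne k$, and the union of two individually non-separating sets can separate: take $\Om_j(s)$ to be a compact arc whose two endpoints lie inside the ball $U_{m_k}(s)$ and which bulges outward, so that $\Om_j(s)\cup U_{m_k}(s)$ encloses a bounded cavity; nothing stops $\Om_k(s)$ — and hence your collar $C_k(s)$, however small you take $N_k$ — from sitting inside that cavity. The poles are then trapped in a bounded component and cannot be swept to infinity while avoiding $\Om\cup U_{m_k}$. Your opening remark that a disjoint union of compacta with connected complements has connected complement is correct, but it does not apply here because $U_{m_k}$ and the $\Om_j$'s are \emph{not} pairwise disjoint.

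The repair is to give up the generic exhaustion and take one adapted to $\Om$, which is precisely what the paper does: choose nested compacta $\cK_j$ with the additional properties that $(\Om\cup\cK_j)^\cc(t)$ is connected for every $t$ and that any $\Om_i$ meeting $\cK_j$ is contained in the interior of $\cK_{j+1}$. The paper's explicit recipe is concentric balls $B_{R_j}$ with $\pd B_{R_j}\cap\pd\Om=\emptyset$ and $R_{j+1}$ large enough to swallow every component meeting $B_{R_j}$; the first condition forces each connected $\Om_i$ to lie either entirely inside $\overset{\circ}{\cK}_j$ or entirely outside $\cK_j$, so that $\Om\cup\cK_j$ really is a \emph{disjoint} union of $\cK_j$ with some of the $\Om_i$'s, and then the Alexander-duality argument you invoked at the start does apply and yields connectedness of the complement slices. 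Replace your $U_{m_k}$ by such a $\cK_{m_k}$ and the sweeping step goes through; the rest of your proof then closes the lemma.
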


\begin{proof}
Since the set $\Om $ is the locally finite union of $\Om_i$, it is
easy to show that there exists an exhaustion $\emptyset=:\cK_0 \subset \cK_1\subset \cK_2\subset\cdots$ by compact sets of $\RR^{n+1}$ such that:
\begin{enumerate}
\item The union of the interiors of the sets $\cK_j$ is $\RR^{n+1}$.
\item For each $j$, the complements of the sets $\cK_j(t)$ and of $(\Om \cup \cK_j)(t)$ are connected for all $t\in\RR$.
  \item If the set $\cK_j$ meets a component $\Om_i$ of $\Om $, then $\Om_i$ is contained in the interior of $\cK_{j+1}$.
\end{enumerate}
An explicit construction of these sets
would be to take a family of concentric balls $\cK_j:=
B_{R_j}$ with radii $R_{j+1}> R_j+1$ chosen so that the following two
conditions are satisfied:
\begin{enumerate}
\item $\pd B_{R_j}\cap \pd \Om=\emptyset$.
\item $R_{j+1}$ is large enough so that all the components of~$\Om$
  that intersect $B_{R_j}$ are contained in $B_{R_{j+1}}$.
\end{enumerate}
The fact that $\Om$ is locally finite ensures that this can be
done. It also follows from this construction that one can safely assume that $\Om\cap (\cK_{j}\backslash \cK_{j-1})\neq
\emptyset$ for all $j\geq1$.

The proof relies on an induction argument that is conveniently presented in terms of a  sequence of positive numbers $\{\ep_k\}_{k=1}^\infty$ related to the approximation parameters $\{\de_i\}_{i=1}^\infty$ and to the above exhaustion by compact sets $\cK_k$ through the conditions
  \begin{equation}\label{epn}
\ep_k<\frac16\min \{\de_i: \Om_i\cap \cK_{k+1}\neq\emptyset\}\quad\text{and}\quad \sum_{j=k+1}^\infty\ep_j<\ep_k\,,
\end{equation}
which are required to hold for all $k\geq1$. For later convenience, we
set $\ep_0:=0$.

The induction hypothesis is that there are functions
$v_j:\RR^{n+1}\to\RR$ satisfying the equation $Lv_j=0$ in $\RR^{n+1}$
and such that, for any integer $J\geq1$, the following estimates hold:
\begin{subequations}\label{induction}
  \begin{align}
\bigg\|v-\sum_{j=1}^Jv_j\bigg\|_{r+2,\Om \cap (\cK_{J+1}\minus \cK_J)}&<\ep_J\,,\label{ind1}\\
\bigg\|    v-\sum_{j=1}^Jv_j\bigg\|_{r+2, \Om \cap (\cK_J\minus \cK_{J-1})}&<\ep_J+2\ep_{J-1}\,,\label{ind2}\\
    \| v_J\|_{r+2, \cK_{J-1}}&<\ep_J+\ep_{J-1}\,.\label{ind3}
  \end{align}
\end{subequations}

Let us start by noticing that, by Lemma~\ref{lem:xtbdpoles}, there
exists a function $v_1$ satisfying the equation $Lv_1=0$ in
$\RR^{n+1}$ and the estimate
\[
\| v-v_1\|_{r+2, \Om \cap \cK_2}<\ep_1\,.
\]
Since we chose $\cK_0=\emptyset$ and $\ep_0=0$, it is a trivial matter that the induction hypotheses~\eqref{induction} hold for $J=1$. We shall hence assume that the induction hypotheses hold for all $1\leq J\leq J'$ and use this assumption to prove that they also hold for $J=J'+1$.

To this end, let us construct a function $w_{J'}$ on the set $\Om \cup \cK_{J'}$ by setting $w_{J'}:=0$ in the set $\cK_{J'}$ and defining $w_{J'}$ on each component $\Om_i$ of the set $\Om $ as
\[
w_{J'}|_{\Om_i}:=\begin{cases}
  v-\sum\limits_{j=1}^{J'}v_j &\text{if }\Om_i\cap (\cK_{J'+2}\minus\stackrel{\circ}\cK_{J'+1})\neq\emptyset\,,\\
  0 &\text{if }\Om_i\cap (\cK_{J'+2}\minus\stackrel{\circ}\cK_{J'+1})=\emptyset\,.
\end{cases}
\]
Here $\stackrel{\circ}\cK_j$ stands for the interior of the set
$\cK_j$, and of course this definition makes sense thanks to the
property~(iii) of the sets~$\cK_j$. The definition of the exhaustion and the first induction hypothesis~\eqref{ind1} guarantee that the function $w_{J'}$ satisfies the equation $Lw_{J'}=0$ in its domain and that one has the estimate
\begin{equation}\label{fm}
\| w_{J'}\|_{r+2,\cK_{J'}\cup(\Om \cap \cK_{J'+1})} \leq \bigg\|v-\sum_{j=1}^{J'}v_j\bigg\|_{r+2, \Om \cap( \cK_{J'+1}\minus \cK_{J'})}<\ep_{J'}\,.
\end{equation}
A further application of Lemma~\ref{lem:xtbdpoles} allows us to take a function $v_{J'+1}$ which satisfies the equation $Lv_{J'+1}=0$ on $\RR^{n+1}$ and which is close to the above function $w_{J'}$ in the sense that
\begin{equation}\label{gm1}
  \| w_{J'}-v_{J'+1}\|_{r+2, \cK_{J'+2}\cap (\Om \cup \cK_{J'})}<\ep_{J'+1}\,.
\end{equation}

Equation~\eqref{gm1} and the way we have defined the function $w_{J'}$
in the set $\cK_{J'+2}\minus \cK_{J'+1}$ ensure that the first induction
hypothesis~\eqref{ind1} also holds for $J=J'+1$. Moreover, from the
relations~\eqref{ind1}, \eqref{fm} and~\eqref{gm1} one finds that, in
the set $\Om \cap (\cK_{J'+1}\minus \cK_{J'})$,
\begin{align*}
  \bigg\|v-\sum_{j=1}^{J'+1}&v_j\bigg\|_{r+2, \Om \cap (\cK_{J'+1}\minus
  \cK_{J'})}\leq \bigg\|v-\sum_{j=1}^{J'}v_j\bigg\|_{r+2, \Om \cap
            (\cK_{J'+1}\minus \cK_{J'})} \\
&\qquad \qquad \qquad \qquad\qquad \qquad \qquad \qquad+ \|v_{J'+1}\|_{r+2, \Om \cap (\cK_{J'+1}\minus \cK_{J'})}\\
  &<\ep_{J'}+ \| w_{J'}-v_{J'+1}\|_{r+2, \Om \cap (\cK_{J'+1}\minus
    \cK_{J'})}+ \| w_{J'}\|_{r+2, \Om \cap (\cK_{J'+1}\minus \cK_{J'})}\\
&<\ep_{J'+1}+2\ep_{J'}\,.
\end{align*}
This proves the second induction hypothesis~\eqref{ind2} for $J=J'+1$. Furthermore,
\[
\| v_{J'+1}\|_{r+2, \cK_{J'}}\leq \|w_{J'}-v_{J'+1}\|_{r+2, \cK_{J'}}+ \| w_{J'}\|_{r+2, \cK_{J'}}<\ep_{J'+1}+\ep_{J'}
\]
by the relations~\eqref{fm} and~\eqref{gm1}, so the third induction hypothesis~\eqref{ind3} also holds for $J=J'+1$. This completes the induction argument.

The desired global solution $u$ can now be defined as
\[
u:=\sum_{j=1}^\infty v_j\,,
\]
with this sum converging uniformly in $C^{r+2,\frac r2+1}$ by a
standard argument thanks to the definition of the constants $\ep_j$ (see conditions~\eqref{epn}) and the third induction hypothesis~\eqref{ind3}. As the functions $v_j$ verify the equation, it is easily checked that the function $u$ also satisfies the equation $Lu=0$ in $\RR^{n+1}$. In addition to this, from the induction hypotheses~\eqref{induction} it follows that, for any integer $J$, in the set $\Om \cap (\cK_{J+1}\minus \cK_{J})$ we have the estimate
\begin{align*}
 \|u-v&\|_{r+2, \Om \cap (\cK_{J+1}\minus \cK_{J})}  \leq
        \bigg\|v-\sum_{j=1}^{J+1}v_j\bigg\|_{r+2, \Om \cap
        (\cK_{J+1}\minus \cK_{J})}\\
&\qquad \qquad \qquad\qquad+\| v_{J+2}\|_{r+2, \Om \cap (\cK_{J+1}\minus \cK_{J})} +  \bigg\|\sum_{j=J+3}^\infty v_j\bigg\|_{r+2, \Om \cap (\cK_{J+1}\minus \cK_{J})}\\
  &<(\ep_{J+1}+2\ep_{J}) +(\ep_{J+2}+\ep_{J+1}) +\sum_{j=J+3}^\infty(\ep_j+\ep_{j-1})\\
  &<6 \ep_{J}
\end{align*}
where we have employed the second condition in the definition of the
constants~\eqref{epn}. To conclude, we now observe that the first
condition in~\eqref{epn} now ensures
\begin{align*}
 \|u-v\|_{r+2, \Om \cap (\cK_{J+1}\minus \cK_{J})}
<\min\{\de_i: \Om_i \cap \cK_{J+1}\neq\emptyset\}\,.
\end{align*}
Therefore, for any~$i$, if $J$ is such that $\Om_i\subset\cK_{J+1}$
and $\Om_i\cap (\cK_{J+1}\backslash\cK_J)\neq\emptyset$ (which implies that
$\Om_i\cap \cK_{J-1}=\emptyset$ by the properties of the
exhaustion~$\{\cK_j\}_{j=1}^\infty$),
\begin{align*}
 \|u-v\|_{r+2, \Om_i}=\max\{ \|u-v\|_{r+2, \Om_i\cap
  (\cK_{J+1}\backslash\cK_J)}\,,\; \|u-v\|_{r+2, \Om_i\cap (\cK_{J}\backslash\cK_{J-1})}\}<\de_i\,.
\end{align*}
By the way the exhaustion has been chosen, for any~$i$ there is always
some~$J$ with this property, so the better-than-uniform approximation lemma then follows.
\end{proof}

\section{Proof of Theorem \ref{T2}}
\label{S.T2}

Without any loss of generality one can assume that $\pd_tv-\De v=0$ in a bounded
open set~$\tOm$ containing~$\Om$ and which is contained in turn in
$B_{R}\times(0,T)$, where we recall that $B_R$ denotes the ball
in $\RR^n$ of
radius~$R$. Taking a smooth cut-off function~$\chi$ as in the proof of
Theorem~\ref{T1}, we infer that the function $\tv:=\chi v$ is
compactly supported and satisfies the equation
\[
\frac{\pd\tv}{\pd t}-\De\tv= \vp
\]
on~$\RR^{n+1}$. In particular, by standard uniqueness properties for
the heat equation one has
\[
\tv(x,t)=\int_{\RRt} K(x-y,t-s)\, \vp(y,s)\, dy\, ds\,,
\]
where the fundamental solution is
\[
K(x,t):= \begin{cases} c_nt^{-n/2} e^{-|x|^2/4t} &\text{ if } t>0\,,\\
0  &\text{ if } t<0\,,
\end{cases}
\]
with $c_n$ a dimensional constant.

If~$\cM$ is any compact set with nonempty interior that is contained in the
complement of $B_{R}\times\RR$ and whose time projection
\[
\{t\in\RR: (x,t)\in\cM \text{ for some } x\in\RR^n\}
\]
contains that of~$\Om'$, Lemmas~\ref{lem:sweep} and~\ref{lem:riemsum} ensure
that there is a finite sum
\[
v_1(x,t):=\sum_{j=1}^J c_j K(x-y_j,t- s_j)
\]
with $c_j$ real constants and $(y_j,s_j)\in\cM$ such that
\[
\|\tv-v_1\|_{C^l(\Om')}<\de\,,
\]
for any fixed $l$ and~$\de>0$ and some compact set
$\Om\subset\!\subset\Om'\subset\!\subset\tOm$ that does not intersect
the support of~$\vp$. Notice that the decay
properties of the fundamental solution~$K(x,t)$ imply that
\begin{equation}\label{decayv1}
\sup_{x\in B_{R}}|D_x^\al \pd_t ^k v_1(x,t)|\leq C
(1+|t|)^{-\frac{n+|\al|}2-k}
\end{equation}
for any fixed~$\al$ and~$k$.

Suppose for the moment that $n\geq3$. Then $v_1$ satisfies the uniform
$L^1$~bound
\[
\sup_{x\in B_{R}}\int_{-\infty}^\infty |D_x^\al\pd_t^k v_1(x,t)|\, dt<C_{\al,k}
\]
for any~$\al$ and~$k$, so the mapping properties of the Fourier transform ensure the Fourier transform of~$v_1$ with respect to
time,
\[
\hv_1(x,\tau):=\frac1{2\pi}\int_{-\infty}^\infty v_1(x,t)\,
e^{-it\tau}\, dt\,,
\]
depends continuously on time and falls off as
\[
\sup_{x\in B_{R}}|D_x^\al \hv_1(x,\tau)|< \frac {C_N}{1+|\tau|^N}
\]
for any~$N$. Of course, $\hv_1(x,\tau)$ is a smooth function of~$x\in
B_R$ because so is~$v_1(x,t)$.

This implies that the inverse Fourier transform formula
\begin{equation}\label{IFT}
D_x^\al \pd_t^k v_1(x,t)=\int_{-\infty}^\infty (i\tau)^k\, D_x^\al\hv_1(x,\tau)\, e^{it\tau}\, d\tau
\end{equation}
holds pointwise for $(x,t)\in B_{R}\times\RR$. In particular, as
$\pd_tv_1-\De v_1=0$ in that set, it follows that
\[
\De \hv_1(x,\tau)-i\tau \hv_1(x,\tau)=0
\]
for all $(x,\tau)\in B_{R}\times\RR$.

We next expand $\hat v_1(x,\tau)$ in a
basis of spherical harmonics on the unit $(n-1)$-dimensional sphere,
which we denote as
\[
\{Y_{mk}(\om): m\geq 0\,,\; 1\leq k\leq d_m\}
\]
with $\om\in
\SS^{n-1}$ and assume to be normalized so that they are an orthonormal
basis of $L^2(\SS^{n-1})$:
\begin{equation}\label{hv1}
\hv_1(x,\tau)=:\sum_{m=0}^\infty\sum_{k=1}^{d_m} \vp_{mk}(r,\tau)\, Y_{mk}(\om)\,.
\end{equation}
Here $r:=|x|$, $\om:=x/|x|$ and $\De_{S^{n-1}} Y_{mk}=-\mu_m Y_{mk}$, where
\[
\mu_m=: m(m+n-2)
\]
is an eigenvalue of the Laplacian on the unit
$(n-1)$-dimensional sphere of multiplicity
\[
d_m:= \frac{2m+n-2}{m+n-2} \binom{m+n-2}{m}\,.
\]
Notice that the coefficient $\vp_{mk}(r,\tau)$
is precisely
\[
\vp_{mk}(r,\tau)=\int_{\SS^{n-1}} \hv_1(r\om,\tau)\, Y_{mk}(\om)\, d\si(\om)\,,
\]
where $d\si$ is the standard measure on the unit sphere, so
$\vp_{mk}(r,\tau)$ is a $C^\infty$ function of $r\in(0,R)$ for all~$\tau$.

As $\hv_1(\cdot,\tau)$ is in $C^\infty(B_R)$ and depends continuously on~$\tau$, the sum converges in
$C^l(B)$ for any integer~$l$ and any smaller ball~$B\subset B_R$, and the convergence is uniform for
$(r,\tau)$ in compact subsets of $[0,R)\times\RR$. For future
reference, we will fix some ball~$B$ such that $\Om\subset B\times
(0,T)$. In particular, it follows that
\begin{align*}
0&= \De\hv_1(x,\tau)-i\tau\,\hv_1(x,\tau)\\
& = \sum_{m=0}^\infty\sum_{k=1}^{d_m} \bigg[\pd_{rr}\vp_{mk}(r,\tau)+\frac{n-1}r\pd_r
  \vp_{mk}(r,\tau) -\bigg(i\tau +\frac{\mu_m}{r^2}\bigg) \vp_{mk}(r,\tau)\bigg]\, Y_m(\om)
\end{align*}
for all $x\in B_R$. Therefore $\vp_{mk}$ satisfies the ODE
\begin{equation}\label{ODE}
\pd_{rr}\vp_{mk}(r,\tau)+\frac{n-1}r\pd_r
  \vp_{mk}(r,\tau)  -\bigg(i\tau +\frac{\mu_m}{r^2}\bigg)\,\vp_{mk}(r,\tau)=0
\end{equation}
for $0<r<R$ and stays bounded at $r=0$. When $\tau\neq0$, the only solution to this ODE
bounded at $r=0$ is
\[
\vp_{mk}(r,\tau)= A_{mk}(\tau)\, r^{1-\frac n2}\, I_{\nu_m}(\sqrt{i
  \tau} r)\,,
\]
where $A_{mk}(\tau)$ is a complex constant that may depend on~$\tau$,
\[
\nu_m:=m+\frac n2-1\,,
\]
and $I_\nu(z)$ denotes the modified Bessel function, defined for
$\Real z>0$ as
\[
I_\nu(z)=\frac1\pi \int_0^\pi e^{z\cos \te} \cos\nu\te\, d\te
-\frac{\sin\nu \pi}\pi\int_0^\infty e^{-z \cosh t-\nu t}\, dt\,.
\]
It is well known
that, for any complex~$z$, this function satisfies the simple exponential
bound
\begin{equation}\label{boundI}
|I_\nu(z)|<C_\nu e^{C_\nu |z|}\,,
\end{equation}
where the constant~$C_\nu$ depends on~$\nu$. When $\tau=0$ the
solution is a polynomial of degree $m+n-2$.

In view of the good convergence properties of the integral~\eqref{IFT} and of the
series~\eqref{hv1}, it is not hard to see that for any~$l$ and any $\de>0$ one
can choose large enough $M$ and~$\tau_0$ such that
\[
\|v_1-v_2\|_{C^l(B\times(-T,T))}<\de\,,
\]
where
\[
v_2(x,t):= \sum_{m=0}^M\sum_{k=1}^{d_m} \int_{-\tau_0}^{\tau_0} \vp_{mk}(r,\tau)\, Y_{mk}(\om)\, e^{it\tau}\, d\tau\,.
\]
Notice that $v_2$ is automatically real-valued.
By construction,
\[
\frac{\pd v_2}{\pd t}-\De v_2=0
\]
in~$\RR^{n+1}$ and $v_2$ is bounded as
\[
\sup_{t\in\RR}|v_2(x,t)|< C e^{C|x|}
\]
as a consequence of~\eqref{boundI}.

Let us consider the smooth function
\[
f(x):=v_2(x,0)\,.
\]
As $\pd_tv_2-\De v_2=0$ and $v_2(x,0)=f(x)$, the fact that $f$ satisfies the
simple exponential bound $|f(x)|< Ce^{C|x|}$ permits to invoke
standard uniqueness results for the heat equation (see~\cite[Sec.~7.1.b]{John}) to conclude that
\[
v_2(x)=\int_{\RR^n} K(x-y,t)\, f(y)\, dy\,.
\]
As the integral converges uniformly in~$C^l$, one can take a smooth
cutoff function (for example, $\chi_1(\cdot/\ep)$, where $0\leq
\chi_1\leq 1$, $\chi_1=1$ on $B_1$ and $\chi_1=0$ outside $B_2$) to
conclude that
\[
\|v_2-u\|_{C^l(B\times(0,T))}<\de
\]
for any small enough~$\ep$, where
\[
u(x,t):= \int_{\RR^n} K(x-y,t)\, f(y)\,\chi_1(y/\ep)\,  dy\,.
\]
By construction,
\[
\|u-v\|_{C^l(\Om)}<C\de\,,
\]
so the theorem follows when $n\geq3$.

When $n=2$, the argument is essentially the same. The fact
that
\[
|v_1(x,t)|<\frac C{1+|t|}
\]
for all~$x\in B_R$ and $t\in\RR$
immediately implies that its Fourier transform $\hv_1(x,\cdot)$ is in
$L^p(\RR)$ for all $2\leq p<\infty$ and
satisfies
\[
\sup_{x\in B_R} \|\hv_1(x,\cdot)\|_{L^p(\RR)}< C_p\,.
\]
Furthermore, as the decay estimate~\eqref{decayv1} shows that
\begin{equation}\label{refdecay}
\sup_{x\in B_R}\|\pd_t^kD_x^\al v_1(x,\cdot)\|_{L^1(\RR)} <C
\end{equation}
whenever $k+|\al|\geq1$, it follows from the mapping properties of the
Fourier transform that
\[
\widehat{\pd_t^kD_x^\al v_1}(x,\tau)= (i\tau)^k\, D_x^\al \hv_1(x,\tau)
\]
is a continuous function of~$\tau$ and uniformly bounded
in~$x\in B_R$ for~$k+|\al|\geq1$. Therefore, $D_x^\al \hv_1(x,\tau)$
decays rapidly at infinity, that is, for all~$N$ and~$\al$ there exists
a constant $C_{N,\al}$ such that
\[
\sup_{x\in B_R}|D_x^\al \hv_1(x,\tau)|<\frac{C_N}{|\tau|(1+|\tau|^N)}\,,
\]
so $\tau^k D_x^\al\hv_1(x,\tau)$ is an integrable function of~$\tau$ uniformly~in $x\in
B_R$, for any $k\geq0$.
Hence the inverse Fourier transform integral
\[
\pd_t^k D_x^\al  v_1(x,t)=\int_{-\infty}^\infty (i\tau)^k\, D_x^\al\hv_1(x,\tau)\, e^{it\tau}\, d\tau
\]
converges uniformly on~$(x,t)\in B_R\times\RR$ for all $k$ and~$\al$, so the rest of the argument
carries over to the case $n=2$ without any further modifications.

\section{Proof of Theorem \ref{T.cp}}
\label{S.cp}

We start by noticing that the function $v'(x,t):=e^{-\la_0t}v(x,t)$
satisfies the equation
\[
Lv'-\la_0 v'=0
\]
and that the local hot spots of~$v'$ coincide with those
of~$v$. Hence, by taking $\la_0:=\sup_{(x,t)\in \RR^n+1}c(x,t)$
(which is finite by the assumptions on the coefficients of the
operator~$L$) and considering the equation for~$v'$ instead of that
for~$v$, we infer than one can assume $c(x,t)<0$ without any loss
of generality.

We can also assume that the curve $\ga(t)$ is
smooth, since otherwise in the argument below one
can replace $\ga(t)$ by any function $\widetilde\ga(t)$ in $C^\infty(\RR)$ such
that
\[
|\ga(t)-\widetilde\ga(t)|<\frac{\de(t)}4\,.
\]
This is a consequence of the density of smooth functions in the space
of continuous functions $C^0(\RR)$ (see e.g.~\cite[Theorem~2.2]{Hirsch}).

For each integer~$k$, let us consider the bounded spacetime domain
\[
\Om_k:= \bigg\{ (x,t)\in\RR^{n+1}: k-\frac15< t <k+\frac65,\; \Big|
x-\ga(t)+(-1)^k \frac{\de_k} 2 e\Big|<\frac{\de_k} 4\bigg\}\,,
\]
where $e\in\RR^n$ is a fixed unit vector and $\de_k$ is a small
constant that we will define in terms of the function $\de(t)$ later
on. Notice that the closures of
these domains are pairwise disjoint and
\[
|x-\ga(t)|<{\de_k}
\]
for all $(x,t)\in\Om_k$.

Let $\phi$ be a smooth nonnegative function on $\RR^n$, not identically zero, that is supported in
the ball of radius~$\frac14$. In each domain~$\Om_k$ we shall construct a function~$v_k$ such that
\[
Lv_k=0\quad \text{in }\Om_k\,,
\]
$v_k(x,t)=0$ if $x\in \pd\LL\Om_k$ and
\[
v\Big(x,k-\frac15\Big)=\phi\bigg(\frac{x-\ga(k-\frac15)+(-1)^k \frac{\de_k} 2 e}{\de_k}\bigg)\,.
\]
Here
\[
\pd\LL\Om_k :=\bigg\{ (x,t)\in\RR^{n+1}: k-\frac15< t <k+\frac65,\;
\Big|x-\ga(t)+(-1)^k \frac{\de_k} 2 e\Big|=\frac{\de_k} 4\bigg\}
\]
is the lateral boundary of~$\Om_k$.

To this end, let us introduce the new coordinates
\[
\xi:= x-\ga(t)+(-1)^k \frac{\de_k} 2 e\,,\qquad \tau:= t\,.
\]
This change of coordinates maps the domain $\Om_k$ onto $B_{\frac{\de_k}4}\times
(k-\frac15, k+\frac65)$. With some abuse of
notation, we will still denote by $v_k(\xi,\tau)$ the expression of
the function $v_k(x,t)$ in the new coordinates. The equation for $v_k$ then reads
\begin{multline*}
-\frac{\pa v_k}{\pa \tau}+ \sum_{i,j=1}^n a_{ij}\bigg(\xi +\ga(\tau)- (-1)^k \frac{\de_k} 2 e,\tau\bigg)\frac{\pa^2 v_k}{\pa \xi_i\pa \xi_j}
\\+\sum_{i=1}^n \bigg[\dot \ga_i (\tau) +b_i\bigg(\xi +\ga(\tau)- (-1)^k \frac{\de_k} 2
e,\tau\bigg)\bigg]\,\frac{\pa v_k}{\pa \xi_i}+c\bigg(\xi +\ga(\tau)-
(-1)^k
\frac{\de_k} 2 e,\tau\bigg)\, v_k=0\,,
\end{multline*}
in the domain $|\xi|<\frac{\de_k}4$, $k-\frac15< \tau <k+\frac65$, and we
have the initial and boundary conditions
\[
v_k(\xi,\tau)=0\quad \text{if } |\xi|={\de_k}/4\,,\qquad v_k\Big(\xi, k-\frac15\Big)=\phi(\xi/{\de_k})\,.
\]
It is standard that there is a unique solution to this
initial-boundary value problem. As we can assume that $c\leq0$ without
any loss of generality, the maximum
principle then implies that
\[
v_k(x,t)>0
\]
for all $(x,t)\in\Om_k$. In particular, for each $t\in (k-\frac15,
k+\frac65)$ one can take a point $\widehat X_t^k$ (a global maximum at frozen time~$t$) satisfying
\[
v_k(\widehat X_t^k,t) = \sup_{|x-\ga(t)+(-1)^k \frac{\de_k} 2 e|<\frac{\de_k} 4} v_k(x,t)>0\,.
\]
Notice that this point is not necessarily unique, so if there is more
than one global maximum we will choose
$\widehat X^k_t$ so that the distance of this maximum to the set
\begin{equation*}
S^k_t:=\bigg\{ x\in\RR^n: \Big|x-\ga(t)+(-1)^k \frac{\de_k} 2 e\Big|=\frac{\de_k} 4\bigg\}
\end{equation*}
is as small as possible. If there are more than one maxima that
minimize this distance, any of them will do.

Since these maxima are in the interior of the spacetime
domain, the quantity
\[
\ep_k:=\inf \bigg\{
|\widehat X^k_t-x| : k-\frac15<t< k+\frac65\,,\; \Big|x-\ga(t)+(-1)^k \frac{\de_k} 2 e\Big|=\frac{\de_k} 4\bigg\}
\]
is strictly positive (although possibly not uniformly), and not larger
than~$\de_k/4$. We will set
\[
\eta_k:= \inf \bigg\{
v_k(\widehat X^k_t,t)-v_k(x,t) : k-\frac15<t< k+\frac65\,,\; \Big|x-\ga(t)+(-1)^k \frac{\de_k} 2 e\Big|>\frac{\de_k} 4-\frac{\ep_k}2\bigg\}\,.
\]
Notice that this quantity is positive because we have chosen the
maximum $\widehat X^k_t$ so as to minimize the distance to the set~$S^k_t$.

Let us now define the closed sets
\[
\widetilde\Om_k:=\bigg\{(x,t):  k-\frac18\leq t \leq k+\frac98,\;
\Big|x-\ga(t)+(-1)^k \frac{\de_k} 2 e\Big|\leq \frac{\de_k} 4-\frac{\ep_k}2\bigg\}
\]
and
\begin{equation}\label{defOm}
\Om:=\bigcup_{k=-\infty}^\infty \widetilde\Om_k\,.
\end{equation}
Notice that, by the definition of $\ep_k$ and $\eta_k$,
\[
\inf \bigg\{ v(\widehat X^k_t,t)- v(x,t):  k-\frac18\leq t \leq
k+\frac98\,,\;  \Big|x-\ga(t)+(-1)^k \frac{\de_k} 2 e\Big|=\frac{\de_k} 4-\frac{\ep_k}2\bigg\} \geq{\eta_k}\,.
\]

The function $v$ defined by setting
\[
v(x,t):= v_k(x,t)\qquad \text{if } (x,t)\in \widetilde\Om_k\text{ for
  some }k
\]
then satisfies the equation $Lv=0$ in an open neighborhood of the
closed set~$\Om$. Since $\Om$ is the locally finite union of compact sets such that the
complement of $\Om(t)$ is connected for all $t\in\RR$,
Lemma~\ref{L.technical} ensures that there is a function $u$
satisfying the equation
\[
Lu=0
\]
in $\RR^{n+1}$ and such that
\[
\|u-v\|_{r+2,\widetilde\Om_k}<\frac{\eta_k}3
\]
for all integers~$k$.

It then follows that for all $k-\frac18<t< k+\frac98$ and all $x$ with
\[
\Big|x-\ga(t)+(-1)^k \frac{\de_k} 2 e\Big|=\frac{\de_k} 4-\frac{\ep_k}2
\]
we have
\begin{align*}
u(\widehat X^k_t,t)-u(x,t)&> \Big[v_k(\widehat X^k_t,t)-\frac{\eta_k}3\Big]-
\Big[v_k(x,t)+\frac{\eta_k}3\Big] \\
&= \big[ v_k(\widehat X^k_t,t) - v_k(x,t)\big] -\frac{2\eta_k}3\\
&\geq\frac{\eta_k}3>0\,,
\end{align*}
where we have used the definitions of the various sets and
quantities. It then follows that, for each $t\in (k-\frac18,k+\frac98)$, the function
$u(\cdot, t)$ must have a local maximum $X^k_t$ in the region
\[
\bigg\{x\in\RR^n: \Big|x-\ga(t)+(-1)^k \frac{\de_k} 2 e\Big|< \frac{\de_k} 4-\frac{\ep_k}2\bigg\}\,.
\]
The first assertion of the theorem then follows if one now chooses the
constants $\de_k$ so that
\[
\de_k<\inf_{|t|<|k|+2}\de(t)\,,
\]
where $\de(t)$ is the function appearing in the statement of the theorem.
Notice that this
local maximum is not necessarily unique and that, in general,
$ X^k_t\neq \widehat X^k_t$.

Let us now pass to second part of the theorem, which asserts that if
$L$ is the heat operator and one only controls the existence of a local
hot spot in a finite time interval $[T_1,T_2]\subset [0,\infty)$, then
one can take a global solution $\tu$ on~$\RR^{n+1}_+$ given by a
smooth compactly supported initial datum~$u_0$. The result follows from the above
construction, but instead of defining~$\Om$ as in~\eqref{defOm} one
sets
\[
\Om:=\bigcup_{k=-1}^K\widetilde\Om_k\,,
\]
where $K$ is any integer larger than~$T_2$. The construction is then essentially
as above but instead of using Lemma~\ref{L.technical} to obtain a
better-than-uniform approximation of
the function~$v$, one directly uses Theorem~\ref{T2}. Replacing $t$ by
$t+1$, Theorem~\ref{T2} ensures that one can
approximate~$v$ on the set~$\Om$ (which is now compact) by a solution~$w(x,t)$
to the Cauchy problem
\[
\frac{\pd w}{\pd t}-\De w=0\quad\text{in }\RR^n\times(-1,\infty)\,,\qquad w(x,-1)= f(x)\,,
\]
where $w(x,t)$ tends to zero as $|x|\to\infty$ for all~$t>-1$ and
$f\in C^\infty_c(\RR^n)$. Taking a smooth cutoff function~$\chi_1(\cdot/\ep)$ as in the
proof of Theorem~\ref{T2} and taking the smooth compactly supported function
\[
u_0(x):= \chi_1(x/\ep)\, w(x,0)
\]
for small enough~$\ep$, one infers that $\tu:=e^{t\De}u_0$
approximates~$v$ on~$\Om$ within an error as small as one wishes:
\[
\|\tu-v\|_{C^l (\Om\cap \RR^{n+1}_+)}<\de_0\,.
\]
The theorem then follows.

\begin{remark}
A minor modification of the proof permits to control more than one hot
spot (even countably many) simultaneously. One only needs to consider
the analog of the domain~$\Om$ defined for each curve, and it is not
hard to see that intersections or even accumulation points of the set
of curves do not introduce any serious complications. With a little
work, one then finds the following generalization of Theorem~\ref{T.cp}:

\begin{theorem}\label{T.many}
Let $\ga_i:\RR\to\RR^n$ be a family of (possibly intersecting) parametrized
curves in space labeled
by the integers $i$ in a possibly infinite subset $\cI\subseteq \ZZ$. Take
any positive continuous functions on the line ${\de_i(t)}$. Then
there is a solution to the parabolic equation $Lu=0$ on
$\RR^{n+1}$ such that, for each time $t\in\RR$ and $i\in\cI$,
$u$ has a local hot spot $X_t^i$ with
\[
|X_t^i-\ga_i(t)|<{\de_i(t)}\,.
\]
Furthermore, if $L=-\pd_t+\De$, $\cI$ is finite and $[T_1,T_2]\subset[0,\infty)$ is any
finite interval, for any~$\de_0>0$ there is a function $u_0\in C^\infty_c(\RR^n)$ such that, for each time $t\in [T_1,T_2]$
and $i\in\cI$, $\tu:=e^{t\De}u_0$ has a local hot spot $\tX_t^i$ with
\[
|\tX^i_t-\ga_i(t)|<\de_0\,.
\]
\end{theorem}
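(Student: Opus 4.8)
The plan is to follow the proof of Theorem~\ref{T.cp} almost verbatim, carrying out the local construction around each curve $\ga_i$ separately and then gluing the pieces. Exactly as there, after replacing $u$ by $e^{-\la_0 t}u$ with $\la_0:=\sup c$ we may assume $c(x,t)<0$, and after replacing each $\ga_i$ by a smooth curve within $\tfrac14\inf_{|t|\le|i|+2}\de_i(t)$ of it we may assume that every $\ga_i$ is smooth.

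The first step is to fix the spacetime geometry. For each $i\in\cI$ choose a vector $\ba_i\in\RR^n$; since $n\ge2$ and the curves are smooth, the image of $\ga_i-\ga_j$ has Lebesgue measure zero, so a generic choice of the family $\{\ba_i\}$ with $|\ba_i|$ as small as we like makes the translated curves $\ga_i-\ba_i$ pairwise disjoint, meaning $\ga_i(t)-\ba_i\ne\ga_j(t)-\ba_j$ for all $t$ and all $i\ne j$. For each integer $k$ we then introduce a tube $\Om_{i,k}$ around $\ga_i-\ba_i$ on the time window $(k-\tfrac15,k+\tfrac65)$ exactly as the domain $\Om_k$ is built in the proof of Theorem~\ref{T.cp}, with $\de_k$ replaced by a radius parameter $\de_{i,k}>0$ still to be chosen. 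We select the $\de_{i,k}$ small enough that (a) $\de_{i,k}<\inf_{|t|\le|k|+2}\de_i(t)$, so that the whole of $\Om_{i,k}$ lies within $\de_i(t)$ of $\ga_i(t)$; and (b) the closed domains $\Om_{i,k}$ form a locally finite, pairwise disjoint family. Condition (b) is the only genuinely new point: since on each bounded time window the disjoint curves $\ga_i-\ba_i$ stay at positive distance, shrinking $\de_{i,k}$ keeps the finitely many tubes over any bounded spacetime region mutually disjoint, and enumerating the pairs $(i,k)$ and shrinking each new $\de_{i,k}$ below a function of the previously chosen tubes yields the local finiteness. If some curves coincide or accumulate so that this cannot be arranged, one simply collapses the clustering tubes into a single connected tube; this is harmless, because the conclusion only asks for a local hot spot within $\de_i(t)$ of $\ga_i(t)$, and one point may serve several indices $i$ whose curves are that close.

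The analytic heart is then identical to the single-curve case. In each $\Om_{i,k}$ one solves the initial--boundary value problem $Lv_{i,k}=0$ with a fixed nonnegative bump as initial datum at time $k-\tfrac15$ and vanishing lateral data; since $c<0$, the maximum principle produces a strictly positive interior maximum $\widehat X^{i,k}_t$ of $v_{i,k}(\cdot,t)$ for each admissible $t$, together with the positive gaps $\ep_{i,k}$ (distance from the maximum to the lateral boundary) and $\eta_{i,k}$ (drop of $v_{i,k}$ between the maximum and the shrunken lateral boundary). Shrinking each tube to $\widetilde\Om_{i,k}$ as in the proof of Theorem~\ref{T.cp} and setting $v:=v_{i,k}$ on $\widetilde\Om_{i,k}$ defines a function satisfying $Lv=0$ on a neighborhood of the locally finite, pairwise disjoint union $\Om:=\bigcup_{i,k}\widetilde\Om_{i,k}$ of compact connected sets whose time slices have connected complement. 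Lemma~\ref{L.technical} then gives a global solution $u$ of $Lu=0$ with $\|u-v\|_{r+2,\widetilde\Om_{i,k}}<\tfrac13\eta_{i,k}$ for all $i,k$, and the same three-term estimate used in the proof of Theorem~\ref{T.cp} shows that $u(\cdot,t)$ has a local maximum $X^i_t$ inside the $i$-th tube at every time $t$, so that $|X^i_t-\ga_i(t)|<\de_i(t)$.

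For the second assertion, $\cI$ is finite and $t$ ranges only over $[T_1,T_2]\subset[0,\infty)$, so it suffices to let $k$ run over $-1\le k\le K$ for some integer $K>T_2$; then $\Om$ is compact, the generic shifts $\ba_i$ already place the finitely many tubes at positive distance, and no collapsing is needed. One replaces the use of Lemma~\ref{L.technical} by Theorem~\ref{T2} (after the shift $t\mapsto t+1$) followed by the cutoff argument at the end of the proof of Theorem~\ref{T.cp}, obtaining a function $u_0\in C^\infty_c(\RR^n)$ such that $\tu:=e^{t\De}u_0$ approximates $v$ on $\Om$ well enough to inherit the local hot spots $\tX^i_t$ with $|\tX^i_t-\ga_i(t)|<\de_0$. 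The main obstacle throughout is purely the combinatorial and topological control of $\Om$ when the curves intersect or accumulate; once $\Om$ is arranged as a locally finite pairwise disjoint union of compact connected pieces with connected-complement slices, the rest of the argument is a word-for-word repetition of the proof of Theorem~\ref{T.cp}.
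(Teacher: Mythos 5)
Your proposal follows the approach the paper itself suggests: the remark preceding Theorem~\ref{T.many} says only that one should run the proof of Theorem~\ref{T.cp} with a separate analog of the domain~$\Om$ for each curve, and you do exactly that, supplying a reasonable amount of additional detail. The measure-zero argument for choosing generic translates is a natural way to arrange disjointness, and the use of Lemma~\ref{L.technical} (respectively Theorem~\ref{T2}) is just as in the single-curve case.

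There is, however, one concrete gap in your treatment of the shift. You fix a single vector $\ba_i$ per curve and ask that $|\ba_i|$ be ``as small as we like,'' but your later requirement (a), namely $\de_{i,k}<\inf_{|t|\le|k|+2}\de_i(t)$, only controls the tube radius and the $(-1)^k$ offset, not $\ba_i$. Since a point of $\Om_{i,k}$ is within $|\ba_i|+\tfrac34\de_{i,k}$ of $\ga_i(t)$, you actually need $|\ba_i|<\de_i(t)$ uniformly in $t$; if $\de_i(t)\to0$ as $|t|\to\infty$ (which the statement permits and which is essential, e.g.\ for Corollary~\ref{C.cp}), no nonzero fixed $\ba_i$ works. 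The fix is to make the shift $k$-dependent, exactly as the offset $(-1)^k\tfrac{\de_k}2e$ already is in Theorem~\ref{T.cp}: choose $\ba_{i,k}$ with $|\ba_{i,k}|<\tfrac14\inf_{|t|\le|k|+2}\de_i(t)$ and run the genericity argument window by window.

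A subtler point concerns the accumulation case. If infinitely many $\ga_i$ accumulate at a spacetime point while the $\de_i$ shrink, the disjoint tubes $\Om_{i,k}$ cannot form a locally finite family near that point, so the hypothesis of Lemma~\ref{L.technical} fails, and ``collapsing'' tubes replaces infinitely many required hot spots by a single one — which suffices only if the accumulation rate of the $\ga_i$ is slow relative to the decay of the $\de_i$. This needs an explicit argument (for instance, splitting $\cI$ into blocks according to the separation of the curves, and dealing with each block either by disjoint tubes or by a shared tube once the constraint $|\ga_i(t)-\ga_j(t)|<\de_i(t)-\de_j(t)$ is verified). The paper itself dismisses this with a sentence, so your informality here matches the authors' level of detail, but as a standalone proof the accumulation case still requires this extra argument.
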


\end{remark}

\section{Proof of Theorem~\ref{T.levelsets}}
\label{S.levelsets}

Let $D_i$ denote the bounded domain of $\RR^n$ whose boundary
is~$\Si_i$. We start by constructing, for each integer $i\in\cI$, a
function $v_i(x,t)$ on $D_i\times(i-\frac15,i+\frac65)$ that satisfies
the equation
\[
Lv_i=0
\]
with initial and boundary conditions
\[
v_i\Big(i-\frac15,x\Big)=\phi_i(x)\,,\qquad v_i(t,\cdot)|_{\Si_i}=0\,.
\]
Here $\phi_i$ is any smooth, nonnegative function supported on $D_i$ that is
not identically zero. Since the coefficient $c(x,t)$ is nonpositive,
the maximum principle ensures that
\[
v_i(x,t)>0
\]
for all $(x,t)\in D_i\times(i-\frac15,i+\frac65)$, and Hopf's boundary
point lemma shows that the normal derivative of $v_i$ is negative:
\[
\frac{\pd v_i}{\pd\nu}<0\,.
\]

Let us now fix some positive constants~$\ep_i$, with
$i\in\cI$. Fixing~$t$ and regarding it simply as a
parameter, Thom's isotopy
theorem (see e.g.~\cite[Theorem 20.2]{AR} or~\cite[Theorem~3.1 and Remark~3.2]{Adv}) then ensures that there is some $\al_i>0$
such that, for any $t\in
[i-\frac18,i+\frac98]$, given a positive constant $\al\leq \al_i$ there is a
diffeomorphism~$\Phi_t^i$ of $\RR^n$ with
\[
\|\Phi^i_t-\id\|_{C^{r+2}}<\ep_i
\]
and
such that
\begin{equation}\label{Phi1}
\Ga_{i,\al}(t)=\Phi^i_t(\Si_i)\,,
\end{equation}
where
\[
\Ga_{i,\al}(t)=\{x\in D_i : v_i(x,t)=\al\}\,.
\]
It is worth mentioning that Thom's isotopy theorem does not grant
exactly~\eqref{Phi1}, but only the existence of a connected component
of~$\Ga_{i,\al}(t)$ of the form~$\Phi^i_t(\Si_i)$. However, the
maximum principle ensures that, for $t\geq i-\frac18$, if $\al$ is small
enough (say smaller than $\al_i$, without any loss of generality), the
set~$\Ga_{i,\al}(t)$ is connected. Indeed, $v_i(\cdot,t)$ is
strictly positive on~$D_i$ for $t> i-\frac15$, so for very
small~$\al$ the level set~$\Ga_{i,\al}(t)$ must be contained in a
small (uniformly in~$t\in [i-\frac18,i+\frac98]$) neighborhood
of~$\Si_i$. But Thom's isotopy theorem (which is basically an
application of the implicit function theorem) guarantees that the only
connected component in a small neighborhood of~$\Si_i$
is~$\Phi^i_t(\Si_i)$, which implies~\eqref{Phi1}.

Observe that, for small enough $\al_i$ and $t\in
[i-\frac18,i+\frac98]$, $\Ga_{i,\al}(t)$ is the
boundary of the set
\[
D_{i,\al,t}:=\{x\in D_i : v_i(x,t)>\al\}\,.
\]
Moreover, if $\al_i$ is small enough, $\nabla v_i$ does not vanish on
$\Ga_{i,\al}(t)$, so, again by Thom's isotopy theorem, there is some $\eta_i>0$ such that, if $w$ is a
function with
\[
\max_{t\in[i-\frac18,i+\frac98]} \|v_i(t,\cdot)-w(t,\cdot)\|_{C^{r+2}(D_{i,\frac\al2,t})} <\eta_i\,,
\]
then we have
\begin{equation}\label{ls}
\{x\in D_{i,\frac\al2,t}: w(x,t)=\al\}=\tPhi^i_t(\Si_i)
\end{equation}
for all $t\in[i-\frac18,i+\frac98]$, where $\tPhi^i_t$ is a
diffeomorphism of~$\RR^n$ with
\[
\|\tPhi^i_t-\id\|_{C^{r+2}}<2\ep_i\,.
\]
Notice that,  if $i$ and
$i+1$ are in~$\cI$, as the intersection $D_i\cap D_{i+1}$ is empty, so
is $D_{i,\al,t}\cap D_{i+1,\al',t}$.

We are now ready to complete the proof of
Theorem~\ref{T.levelsets}. Define the closed set
\[
\Om:=\bigcup_{i\in\cI} \Om_i\,,
\]
with
\[
\Om_i:= \bigg\{ (x,t)\in \RR^n\times
\Big[i-\frac18,i+\frac98\Big]: x\in \overline{D_{i,\frac{\al_i}2,t}}\bigg\}
\]
and observe that the previous discussion ensures that the closed sets $\Om_i$
are pairwise disjoint. If we define a function~$v$ on~$\Om$ by setting
\[
v(x,t):=v_i(x,t)
\]
if $t\in [i-\frac18,i+\frac98]$ and $x\in
\overline{D_{i,\frac{\al_i}2,t}}$ for some $i\in\cI$, it is then clear
that $Lv=0$ in $\Om$.

As $\Om$ is a locally finite union of pairwise disjoint sets and the
complement of $\Om(t)$ in $\RR^n$ is connected for all~$t$,
Lemma~\ref{L.technical} then ensures that there is a function satisfying the
equation
\[
Lu=0
\]
in $\RR^{n+1}$ such that
\[
\|u-v\|_{r+2,\Om_i}<\eta_i\,.
\]
Equation~\eqref{ls} then implies that, if $t\in [i-\frac18,i+\frac98]$
for some $i\in\cI$, we have that there is a connected component of the set
\begin{equation}\label{setkk}
\{x\in \RR^n: u(x,t)=\al_i\}
\end{equation}
given by
\[
\Phi^i_t(\Si_i)\,,
\]
where $\Phi^i_t$ is a diffeomorphism of~$\RR^n$ with
$\|\Phi^i_t-\id\|_{C^{r+2}}<2\ep_i$. This is the only
connected component of the set~\eqref{setkk} that intersects with (and is
in fact contained in)~$D_{i,\frac{\al_i}2,t}$. This proves the first part of the
theorem.

Let us now prove the three final assertions of the theorem. Firstly,
notice that if $\cI$ is finite, one can obviously take the same~$\al$
(e.g., $\al:=\min_{i\in\cI}\al_i$) for all~$i$. Concerning the decay,
notice that if $\cI$ is finite, the
set $\Om$ is compact, so when $L$~is the heat operator one can use
Theorem~\ref{T2} just as in the case of Theorem~\ref{T.levelsets} to show that there is a solution $u=e^{t\De} u_0$
with $u_0\in C^\infty_c(\RR^n)$ such that
\[
\|u-v\|_{C^l(\Om)}<\de_0\,.
\]

Finally, suppose that the coefficients of~$L$ are analytic in~$x$. By the density of analytic functions in the space of $C^2$~functions,
without loss of generality one can henceforth assume that the hypersurfaces
$\Si_i$ are analytic too. It is then standard that $v_i(x,t)$ is
analytic in~$x$ up to the boundary, so in particular there is a
slightly larger domain $D_i'\supset \overline{D_i}$ such that the
equation $Lv_i=0$ holds in $D_i'\times (i-\frac15,i+\frac65)$. Thom's
isotopy theorem then ensures that one can take $\al_i=0$, no matter
whether the set $\cI$ is finite or not.

\section{The heat equation on the torus}
\label{S.torus}

Let us first recall Theorems~\ref{T.cp}
and~\ref{T.levelsets} about local hot spots and isothermic hypersurfaces of solutions
to (in particular) the equation $\pd_t v-\De v=0$ on $\RR^{n+1}$ and Remark
\ref{rem:torus}, which ensures the structure stability of these
objets. With these theorems in mind, the results
presented in Theorems~\ref{T.torus} and~\ref{T.torus2} about local hot
spots and isothermic hypersurfaces of solutions to the heat equation on $\TT^n\times\RR$ will stem from the following
lemma. Informally speaking, this lemma ensures that, given a
solution~$v$ of the heat equation
on a certain compact spacetime region
$\Om\subset\RR^{n+1}$, there is a solution~$u$ of the heat equation on
$\TT^n\times\RR$ that approximates~$v$ on~$\Om$ after a suitable
parabolic rescaling of variables in~$\TT^n\times\RR$. In this section, as the coefficients of the heat equation are
constant, for simplicity we will use regular H\"older norms instead of
the parabolic ones.

\begin{lemma}\label{L.torus}
Let $\Om$ be a compact subset of $\RRt_+$ such $\Om^\cc(t)$ is connected for all $t\in\RR$. Given a
function $v$ that satisfies the heat equation $\pd_tv-\De v=0$ in~$\Om$, a positive
real~$k$ and $\de>0$, there
exists an arbitrarily small~$\ep>0$ and a solution $u(x,t)$ of the heat
equation $\pd_t u-\De u=0$ on $\TT^n\times\RR$, whose initial datum is
in fact a trigonometric polynomial, which approximates~$v$ modulo
a rescaling as
\[
\big\|u(\sqrt\ep\; \cdot,\ep \;\cdot)-v\big\|_{C^k(\Om)}<\de\,.
\]
\end{lemma}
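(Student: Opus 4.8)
The plan is to transport the global solution furnished by Theorem~\ref{T2} to the torus by combining a parabolic rescaling with a periodization, and then to correct the resulting initial datum so that it becomes a trigonometric polynomial.

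First I would invoke Theorem~\ref{T2}: since $\Om$ is a compact subset of $\RR^{n+1}_+$ with $\Om^\cc(t)$ connected for every~$t$, taking the approximation order to be any integer larger than~$k$ yields a function $u_0\in C^\infty_c(\RR^n)$ such that $\tv:=e^{t\De}u_0$ satisfies $\|\tv-v\|_{C^k(\Om)}<\de/3$. Fix a ball $B_\rho\supset\supp u_0$ and, using compactness of~$\Om$ inside $\RR^n\times(0,\infty)$, real numbers $R>0$ and $0<t_0\leq T$ with $\Om\subset B_R\times[t_0,T]$. The structural fact to exploit is the parabolic scaling invariance of the heat equation together with the identity $K(x/\sqrt\vep,\,t/\vep)=\vep^{n/2}K(x,t)$ for the Gaussian kernel: if $\vep>0$ and $u_0^\vep(x):=u_0(x/\sqrt\vep)$, then $U^\vep:=e^{t\De}u_0^\vep$ solves the heat equation on $\RR^n\times(0,\infty)$ and $U^\vep(y,s)=\tv(y/\sqrt\vep,\,s/\vep)$; in particular $U^\vep(\sqrt\vep x,\vep t)=\tv(x,t)$.

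Next I would periodize. Once $\vep$ is small enough that $\sqrt\vep(\rho+R)<\pi$, the function $u_0^\vep$ — which by construction has tiny support — descends to a smooth function~$g$ on $\TT^n$, and since the heat kernel on~$\TT^n$ is the periodization over $2\pi\ZZ^n$ of the Euclidean one, the solution $G:=e^{t\De_{\TT^n}}g$ on $\TT^n\times(0,\infty)$ satisfies, after unfolding,
\[
G(\sqrt\vep x,\vep t)=\sum_{m\in2\pi\ZZ^n}U^\vep(\sqrt\vep x-m,\vep t)=\tv(x,t)+\sum_{m\in2\pi\ZZ^n\backslash\{0\}}\tv\Big(x-\frac{m}{\sqrt\vep},\,t\Big)\,.
\]
This is the heart of the matter: on~$\Om$ one has $|x|\leq R$, $t\in[t_0,T]$ and $|x-m/\sqrt\vep|\geq 2\pi/\sqrt\vep-R$ for $m\neq0$, and since $\tv=e^{t\De}u_0$ and all its derivatives decay like a Gaussian in the space variable, uniformly for~$t$ in the compact interval $[t_0,T]$ (this is where it matters that~$\Om$ stays away from $\{t=0\}$), the tail sum and each of its derivatives up to the relevant order is bounded on~$\Om$ by $C\,e^{-c/\vep}$. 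Hence, choosing $\vep\leq1$ small, $\|G(\sqrt\vep\,\cdot,\vep\,\cdot)-\tv\|_{C^k(\Om)}<\de/3$. I expect this quantitative periodization estimate to be the main point of the proof: it is the place where the parabolic scales $\sqrt\vep$ in space and $\vep$ in time are used, since they make the unrescaled torus of side $2\pi/\sqrt\vep$ enormous compared with the fixed set~$\Om$, so that the periodic images decouple.

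Finally I would correct the initial datum. Truncating the rapidly convergent Fourier series of the smooth periodic function~$g$, I choose a trigonometric polynomial~$p$ with $\|p-g\|_{C^N(\TT^n)}$ as small as needed, where $N$ is a fixed integer large enough in terms of~$k$, and set
\[
u:=e^{t\De_{\TT^n}}p\,,
\]
which is a solution of $\pd_tu-\De u=0$ on all of $\TT^n\times\RR$ — being a finite trigonometric sum $\sum_j\hat{p}_j\,e^{-|j|^2t}\,e^{\iu j\cdot x}$, it is defined for every $t\in\RR$ — whose initial datum $u(\cdot,0)=p$ is a trigonometric polynomial. Since $u-G=e^{t\De_{\TT^n}}(p-g)$ and the periodic heat semigroup maps $C^N(\TT^n)$ boundedly into $C^k(\TT^n\times[0,T])$ with a constant independent of~$\vep$, taking~$p$ close enough to~$g$ gives $\|u-G\|_{C^k(\TT^n\times[0,\vep T])}<\de/3$; and since for $\vep\leq1$ the rescaling $(x,t)\mapsto(\sqrt\vep x,\vep t)$ cannot increase the $C^k$ norm over its image (each derivative acquires a nonnegative power of~$\sqrt\vep$ and the Hölder quotients are scaled by~$\vep^{\gamma/2}$), it follows that $\|u(\sqrt\vep\,\cdot,\vep\,\cdot)-G(\sqrt\vep\,\cdot,\vep\,\cdot)\|_{C^k(\Om)}<\de/3$. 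Combining the three bounds via the triangle inequality gives $\|u(\sqrt\vep\,\cdot,\vep\,\cdot)-v\|_{C^k(\Om)}<\de$; and since every condition imposed on~$\vep$ has the form $\vep<\vep_0$ and persists for smaller values, $\vep$ can be taken arbitrarily small, which completes the argument.
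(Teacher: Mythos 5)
Your argument is correct, but it takes a genuinely different route from the paper's. After the common first step (Theorem~\ref{T2} producing $w=e^{t\De}f$ with $f\in C^\infty_c$), the paper works entirely on the frequency side: it writes $w(x,t)=\int e^{ix\cdot\xi-t|\xi|^2}\hf(\xi)\,d\xi$, approximates the complex measure $\hf(\xi)\,d\xi$ weak\textsuperscript{*} by a finite atomic measure supported at \emph{rational} frequencies $\xi_j\in\mathbb{Q}^n$ (with the conjugate symmetry $\xi_{-j}=-\xi_j$, $a_{-j}=\overline{a_j}$ to keep things real), upgrades the resulting $L^\infty$ bound to $C^k$ by interior parabolic estimates, and then sets $u(x,t):=\sum_j a_j e^{iMNx\cdot\xi_j-M^2N^2t|\xi_j|^2}$ with $M$ the common denominator and $N$ a free integer; this is already $2\pi$-periodic and already a finite trigonometric sum, and unravelling with $\ep:=1/(M^2N^2)$ gives the rescaled identity \emph{exactly}. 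Your proof instead works on the physical side: parabolically rescale the compactly supported initial datum, periodize it onto $\TT^n$, and show by a quantitative Gaussian-tail estimate that the images under $2\pi\ZZ^n$ decouple from the fundamental one to $C^k$ error $O(e^{-c/\ep})$ on the fixed compact $\Om$; you then need a separate step (truncating the Fourier series of $g$, with a uniform-in-$\ep$ semigroup bound) to make the initial datum a trigonometric polynomial. What each buys: the paper's Fourier-side route is slicker — rationality is free by density, the trigonometric polynomial and $2\pi$-periodicity are automatic, no explicit decay estimates are needed, and $\ep$ is manifestly arbitrarily small through the free parameter $N$; your physical-space route is more transparent about \emph{why} the parabolic scales $(\sqrt\ep,\ep)$ are the right ones (the torus becomes enormous relative to $\Om$ and the images decouple), and it gives $C^k$ control directly from the Gaussian decay without a separate regularity-bootstrap step, at the cost of an extra Fourier-truncation step and a bit more bookkeeping. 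Both arguments are sound; yours is a legitimate alternative proof.
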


\begin{proof}
Theorem~\ref{T2} ensures that there is $f\in C^\infty_c(\RR^n)$ such
that the solution to the heat equation on $\RR^{n+1}_+$ given by $w:=e^{t\De} f$ approximates~$v$ as
\begin{equation}\label{last1}
\|v-w\|_{C^k(\Om)}<\de'
\end{equation}
for any $\de'>0$. In terms of the Fourier transform of~$f$,
\begin{equation*}
w(x,t)=\int_{\RR^n} e^{ix\cdot\xi -t|\xi|^2} \hf(\xi)\, d\xi\,,
\end{equation*}
where the Fourier transform~$\hf$ is a Schwartz function
on~$\RR^n$.



It is well known that any finite
regular complex-valued Borel measure on~$\RR^n$, such as
\[
\hf(\xi)\, d\xi\,,
\]
can be approximated in
the weak topology by a purely discrete
measure, that is, a finite linear combination of the form
\[
\sum_{j=-J}^J a_j\, \de(\xi-\xi_j)
\]
with $\de(\xi-\xi_j)$ the Dirac measure supported on a point~$\xi_j\in
\RR^n$ and $a_j\in\CC$. Since the set of rational
points~$\mathbb Q^n$ is dense in~$\RR^n$, in fact the points~$\xi_j$ can be
taken in~$\mathbb Q^n$. Moreover, we can also take
\[\xi_{-j}=-\xi_j \quad \text{ and } \quad a_{-j}=\overline{a_j}
\]
because $\hf(\xi)=\overline{\hf(-\xi)}$ as~$f$ is real-valued.
Taking a bounded neighborhood $\Om'$ of~$\overline\Om$,
it follows that
\begin{equation}\label{last3}
\|w_1-w\|_{L^\infty(\Om')}<\de'
\end{equation}
where
\[
w_1(x,t):=\sum_{j=-J}^J a_j\, e^{ix\cdot\xi_j -t|\xi_j|^2}
\]
with $a_j\in\CC$, $\xi_j\in \mathbb Q^n$ as above. Notice that $w_1$
is a real-valued function by the choice of the constants $a_j$ and the
points $\xi_j$.

Combining the
inequalities~\eqref{last1}--\eqref{last3} one gets
\[
\|v-w_1\|_{L^\infty(\Om')}<2\de'\,,
\]
and the fact that $\pd_tv-\De v=\pd_tw_1-\De w_1=0$ allows us to use parabolic estimates to promote the above
uniform bound to a $C^k$~estimate of the form
\begin{equation}\label{last4}
\|v-w_1\|_{C^k(\Om)}<C\de'\,.
\end{equation}

Let $M$ denote the least common multiple of the denominators of the
rational points $\xi_j$, so that $M\xi_j\in \ZZ^n$ for all $1\leq
j\leq J$, and let be~$N$ any positive integer. It then follows that the function
\[
u(x,t):= \sum_{j=-J}^J a_j\, e^{iMN x\cdot\xi_j -M^2N^2 t|\xi_j|^2}
\]
is well defined on $\TT^n\times\RR$, as the periodicity in~$x$ follows
from the fact that $NM\xi_j\in\ZZ^n$. An elementary computation also
shows that
\[
\frac{\pd u}{\pd t}-\De u=0\,.
\]
Moreover, setting
\[
\ep:=\frac1{M^2N^2}
\]
it follows directly from~\eqref{last4} and the definition of~$u$ that
\[
\big\|u(\sqrt\ep\; \cdot,\ep \;\cdot)-v\big\|_{C^k(\Om)}<C\de'\,
\]
The claim then follows by choosing~$\de'$ small enough.
\end{proof}

\section*{Acknowledgments}

The authors are supported by the ERC Starting Grants~633152 (A.E.\ and
M.A.G.-F.) and~335079
(D.P.-S.). This work is supported in part by the
ICMAT--Severo Ochoa grant
SEV-2015-0554.

\appendix
\section{Approximation theorems with decay\\ for essentially
  flat elliptic operators}\label{Appendix}

Our goal in this Appendix is to provide a global approximation theorem
for elliptic equations whose coefficients satisfy certain conditions
at infinity. Before passing to our result, let us recall Browder's global approximation theorem
(without any decay conditions) for general elliptic equations, which
can be understood as the elliptic analog of
Theorem~\ref{T1}. Throughout this section, $D^\cc:=\RR^n\backslash D$
will denote the complement of the set~$D$ in~$\RR^n$.

\begin{theorem}[\cite{Browder,Weinstock}]\label{T.Browder}
Let~$D$ be a closed subset in $\RR^n$ such that $D^\cc$ does not have
any bounded connected components and suppose that $P$ is
an elliptic operator with coefficients of class~$C^r$, with $r>1$ a
non-integer real. Given a
function $v$ that satisfies the equation $Pv=0$ on $D$ and $\de>0$, there exists a solution $u$ of
the equation $Pu=0$ in the whole
space $\RR^n$ that approximates~$v$ as
\[
\|u-v\|_{C^{r+2}(D)}<\de\,.
\]
\end{theorem}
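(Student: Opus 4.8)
The plan is to follow the scheme introduced by Browder~\cite{Browder}, which is the elliptic prototype for our proof of Theorem~\ref{T1}: essentially every step of Sections~\ref{s:lemmas} and~\ref{S.T1} has an elliptic analogue obtained by suppressing the time variable, so what follows is a blueprint rather than a complete argument. The basic object is the fundamental solution $K(x,y)$ of~$P$, which under the H\"older hypothesis on the coefficients is furnished by the classical Levi parametrix construction: it is of class $C^{r+2}$ off the diagonal, solves $PK(\cdot,y)=\de_y$ and $P^*K^*(\cdot,y)=\de_y$ with $K^*(x,y)=K(y,x)$, and away from the diagonal satisfies the derivative bounds $|D^\al_x D^\be_y K(x,y)|\leq C\,|x-y|^{2-n-|\al|-|\be|}$ for $|\al|+|\be|\leq\lfloor r\rfloor+2$ (with the usual logarithmic correction when $n=2$). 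When $n\geq 3$ this makes $K(\cdot,y)$ tend to zero at infinity; when $n\leq 2$ it does not, so the functional-analytic setup below needs the same sort of minor dimension-dependent adjustment as the $n=2$ versus $n\geq 3$ split in the proof of Theorem~\ref{T2}.

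First I would transcribe, deleting the time variable throughout, the elliptic analogues of Lemmas~\ref{lem:movepole}--\ref{lem:xtbdpoles}. The analogue of Lemma~\ref{lem:movepole} asserts that $y\mapsto K(\cdot,y)$ is continuous into $C^{r+2}(U^\cc)$ for any open $U\ni y$, which follows from the off-diagonal bounds together with interior Schauder estimates applied to a uniform cover of $U^\cc$ by bounded domains, using that $K(\cdot,y)-K(\cdot,y')$ solves $P(\cdot)=0$ on $U^\cc$. The analogue of Lemma~\ref{lem:sweep} asserts that if $U$ is a domain, then $K(\cdot,y)$ is approximated in $C^{r+2}(U^\cc)$ by finite sums $\sum_j b_j\,K(\cdot,y_j)$ with the poles $y_j$ in a prescribed bounded subdomain of~$U$; the proof is verbatim, namely a finite signed Borel measure $\mu$ supported on $\tilde U^\cc$ (with $\tilde U\subset U$ bounded) that annihilates every $K(\cdot,z)$ with $z$ in that compact set yields, via $F:=\int K^*(\cdot,z)\,d\mu(z)$, a solution of $P^*F=0$ on $\tilde U$ vanishing on the compact set, whence $F\equiv 0$ there by the unique continuation principle for~$P^*$ and $\int K(\cdot,y)\,d\mu=F(y)=0$, so that Hahn--Banach and another Schauder upgrade conclude. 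The analogue of Lemma~\ref{lem:riemsum} approximates the ``convolution'' $\int K(\cdot,y)\,\vp(y)\,dy$ in $C^{r+2}$ off $\supp\vp$ by sums $\sum_j c_j\,K(\cdot,y_j)$ with $y_j\in\supp\vp$, via a partition of unity on $\supp\vp$. The analogue of Lemma~\ref{lem:sweepinf} asserts that if $D\subset\RR^n$ is unbounded, then $K(\cdot,y)$ is approximated in $C^{r+2}(D^\cc)$ by a global solution of $Pw=0$, obtained by iterating the previous lemma along a curve escaping to infinity in~$D$, with summable tail errors, and passing to the limit. And the analogue of Lemma~\ref{lem:xtbdpoles} combines the last two with a cut-off: a solution on a compact set with connected complement is approximated in $C^{r+2}$ by a global solution.

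With these in hand, the theorem follows as in Section~\ref{S.T1}. Pick an open neighborhood $\tilde D$ of $D$ on which $Pv=0$, a cut-off $\chi$ equal to $1$ near $D$ and supported in $\tilde D$, and set $\tv:=\chi v$, so that $P\tv=\vp$ is supported in $\tilde D\backslash D$. Exhaust $\RR^n$ by balls $B_m$, take a partition of unity $\{\vp_m\}$ subordinate to $B_{m+1}\backslash B_{m-1}$, and put $\tv_m:=\int K(\cdot,y)\,\vp_m(y)\,\vp(y)\,dy$; a telescoping argument based on the analogue of Lemma~\ref{lem:xtbdpoles} gives $\tv=\sum_m(\tv_m+w_m)$ with $Pw_m=0$ on $\RR^n$, the convergence being uniform on compacta. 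Each $\tv_m$ is approximated in $C^{r+2}(B_{m-1}\cup D)$ by a finite sum of fundamental solutions with poles in the bounded set $(\tilde D\backslash D)\cap(B_{m+1}\backslash B_{m-1})$, and since $D^\cc$ has no bounded connected component, every such pole lies in an unbounded connected subset of $(B_{m-1}\cup D)^\cc$; hence the analogue of Lemma~\ref{lem:sweepinf} replaces this finite sum by a global solution $\widehat w_m$ of $P\widehat w_m=0$ within an error $\de/2^{m+1}$ on $B_{m-1}\cup D$. Summing the $\widehat w_m$ and the $w_m$ produces a global solution $u$ of $Pu=0$ with $\|u-v\|_{C^{r+2}(D)}<\de$, which is the assertion.

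As in the parabolic case, the main obstacle is the analytic input rather than the combinatorics. First, one must construct $K(x,y)$ and establish its off-diagonal derivative bounds for a general uniformly elliptic $P$ with merely H\"older coefficients; this is classical (Levi's method), the only nuisance being that a globally defined, decaying fundamental solution need not exist when $n\leq 2$, so there the Riesz--Markov and Hahn--Banach steps have to be reorganized (for instance one works, at each stage of the exhaustion, on a sufficiently large ball, since the final estimate is on the bounded set $D$). Second, one repeatedly promotes uniform bounds to $C^{r+2}$ bounds on the unbounded set $U^\cc$ by interior Schauder estimates applied to a uniform cover by bounded domains, and this is precisely what makes the approximation take place in the $C^{r+2}$ norm. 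The only other nontrivial ingredient is the unique continuation principle for $P^*$ in the sweeping lemma, which is standard for operators whose principal coefficients are H\"older (indeed Lipschitz) continuous.
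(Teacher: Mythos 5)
The paper does not actually prove Theorem~\ref{T.Browder}; it is stated as a citation to Browder and Weinstock and then used as a black box in the proof of Theorem~\ref{T.appendix}. So there is no proof in the paper to compare against. That said, your blueprint — suppress the time variable throughout Sections~\ref{s:lemmas} and~\ref{S.T1} and replace the heat kernel by the Levi fundamental solution — is indeed the right scheme, and it is historically faithful: the paper's parabolic argument is explicitly modeled on Browder's elliptic one, so reversing the transcription recovers Browder's structure (duality against signed measures, unique continuation to kill the annihilator, Hahn--Banach, sweeping of poles along unbounded components, and an exhaustion with telescoping errors). The replacement of the ``horizontal'' unique continuation in Theorem~\ref{thm:ucp} by full strong unique continuation in a connected domain, and the corresponding simplification of the topological hypothesis from ``$\Om^\cc(t)$ has no bounded components for all $t$'' to ``$D^\cc$ has no bounded components,'' are correctly identified.

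Two points in your sketch deserve to be made more precise, because they are exactly where the transcription is not verbatim. First, the Riesz--Markov/Hahn--Banach step as written requires $K(\cdot,y)\in C_0(\tilde U^\cc)$, which is false for $n=2$ (and trivially problematic for $n=1$): Browder's argument avoids this by dualizing on compact sets $\,\overline{B_R}\setminus\tilde U$ rather than on $\tilde U^\cc$, and then running the exhaustion; your parenthetical ``work on a sufficiently large ball'' is the right idea but it does change the functional-analytic setup of the sweeping lemma, so it is more than a ``minor nuisance.'' Second, the identity $\tv=\int K(\cdot,y)\,P\tv(y)\,dy$, which in the parabolic proof of Lemma~\ref{lem:xtbdpoles} follows from causality and uniqueness of the Cauchy problem, has no such automatic justification for a general second-order elliptic $P$ with merely H\"older coefficients: one must either verify that the Levi parametrix is an honest two-sided fundamental solution with the decay needed for a Liouville-type argument, or bypass the representation formula and argue, as in the proof of Theorem~\ref{T1} itself, purely from the fact that $P\bigl(\tv-\sum_{m\leq M+1}\tv_m\bigr)=0$ near $B_M$. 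Both points are handled in Browder's paper, but they are genuine departures from a line-by-line de-parabolization and should be flagged as such rather than folded into ``classical.''
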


We are now ready to state and prove our approximation theorem with
decay, which applies to the class of elliptic operators that we will call {\em essentially flat}\/:

\begin{definition*}
An elliptic operator on $\RR^n$
\[
Pu:=\sum_{i,j=1}^n a_{ij}(x)\frac{\pa^2 u}{\pa x_i\pa x_j}
+\sum_{i=1}^n b_i(x)\frac{\pa u}{\pa x_i}+c(x)u
\]
is {\em essentially flat}\/ if there are constants~$R_0>0$, $c_0>0$ and a function bounded as
\[
|F(r)|\leq C(1+r)^{-1-\ep}
\]
on $(0,\infty)$, with $\ep>0$,
such that
\begin{equation*}
a_{ij}(x)=\de_{ij}\,,\quad b_i(x)=0\,, \qquad c(x)=c_0+F(|x|)
\end{equation*}
for all $|x|>R_0$.
\end{definition*}

The main result of this Appendix can now be stated as follows:

\begin{theorem}\label{T.appendix}
Let $D$ be a compact subset in $\RR^n$ such that $D^\cc$ is connected and suppose that $P$ is
an essentially flat elliptic operator with coefficients of class
$C^r$, with $r>1$ a non-integer real. Given a
function $v$ that satisfies the equation $Pv=0$ on $D$ and $\de>0$, there exists a solution $u$ of
the equation $Pu=0$ in the whole
space $\RR^n$ that approximates~$v$ as
\[
\|u-v\|_{C^{r+2}(D)}<\de
\]
and satisfies the decay condition
\begin{equation}
\sup_{R>0}\frac1R\int_{B_R}|u(x)|^2\, dx<  \infty\,.
\end{equation}
\end{theorem}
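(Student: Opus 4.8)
The proof is a Hahn--Banach duality argument, extending to the essentially flat operator~$P$ the scheme used for the Helmholtz equation in~\cite{Acta,APDE,JEMS}. The relevance of the hypothesis is that outside~$B_{R_0}$ the equation $Pu=0$ reads $\De u+c_0u+F(|x|)u=0$ with $|F(r)|\leq C(1+r)^{-1-\ep}$ and $c_0>0$, that is, a short-range perturbation of the Helmholtz equation at the positive energy~$c_0$; the same holds for the formal adjoint~$P^*$, which is again essentially flat with the same constant~$c_0$. Consequently the stationary scattering theory for $P$ and~$P^*$ at energy~$c_0$ is at our disposal: the limiting absorption principle furnishes outgoing and incoming resolvents $R^*_\pm$ for~$P^*$ that are bounded from the Agmon--H\"ormander space~$B$ into its dual~$B^*$, and the spectral density at~$c_0$ admits an expansion in terms of distorted plane waves~$e_\xi$. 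The only delicate point in having these facts available is that $c_0$ must not be an eigenvalue of~$P^*$, which follows from Rellich's uniqueness theorem together with the unique continuation principle (an eigenfunction would be non-radiating, hence would vanish near infinity by Rellich and then everywhere by unique continuation). Let $\cW$ denote the space of functions that satisfy $Pu=0$ on~$\RR^n$ (hence, by elliptic regularity, are of class~$C^{r+2}$) and obey the Agmon--H\"ormander bound $\sup_{R>0}R^{-1}\int_{B_R}|u|^2<\infty$; every element of~$\cW$ is an admissible approximant, so it is enough to prove that $v|_D$ lies in the closure of $\cW|_D$ in $C^{r+2}(D)$.

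If this were false, the Hahn--Banach theorem would provide a compactly supported distribution~$\mu$ of finite order, supported on~$D$, with $\langle\mu,w\rangle=0$ for all $w\in\cW$ and $\langle\mu,v\rangle\neq0$. We shall contradict the latter by showing that in fact $\langle\mu,u\rangle=0$ for every function~$u$ that solves $Pu=0$ in a neighbourhood of~$D$. First, the distorted Herglotz waves $e_g(x):=\int_{\SS^{n-1}}g(\xi)\,e_\xi(x)\,d\si(\xi)$ with $g\in L^2(\SS^{n-1})$ solve $Pe_g=0$ on~$\RR^n$ and, by the Agmon--H\"ormander estimate (a direct consequence of the resolvent bound above), satisfy the Agmon--H\"ormander decay, so $e_g\in\cW$; hence $0=\langle\mu,e_g\rangle=\int_{\SS^{n-1}}g(\xi)\langle\mu,e_\xi\rangle\,d\si(\xi)$ for all~$g$, which means that the distorted Fourier transform $\xi\mapsto\langle\mu,e_\xi\rangle$ of~$\mu$ vanishes identically on the energy shell.

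Now set $G_\pm:=R^*_\pm\mu\in B^*$, so that $P^*G_\pm=\mu$ and, in particular, $P^*G_\pm=0$ on the connected open set~$D^\cc$. By Stone's formula the difference $G_+-G_-$ equals, up to a nonzero constant, the integral over~$\SS^{n-1}$ of the distorted Fourier transform of~$\mu$ against the distorted plane waves of~$P^*$ (using the standard reciprocity between the generalized eigenfunctions of $P$ and~$P^*$); since this transform vanishes on the shell, $G_+=G_-=:G$. Thus~$G$ solves the exterior equation $\De G+c_0G+F(|x|)G=0$ in $\{|x|>R_0\}$ and satisfies both the outgoing and the incoming radiation conditions, so Rellich's uniqueness theorem forces $G\equiv0$ for $|x|>R_1$ with $R_1$ large enough. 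As~$G$ solves $P^*G=0$ on the connected open set~$D^\cc$ and vanishes on the nonempty open subset $\{|x|>R_1\}$, the unique continuation principle for elliptic equations yields $G\equiv0$ on~$D^\cc$, that is, $\supp G\subset D$.

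Finally, let $u$ solve $Pu=0$ in a neighbourhood of~$D$. Since~$G$ is compactly supported inside that neighbourhood, integrating by parts produces no boundary contribution and $\langle\mu,u\rangle=\langle P^*G,u\rangle=\langle G,Pu\rangle=0$; taking $u=v$ contradicts $\langle\mu,v\rangle\neq0$. Hence $v|_D$ is approximated in $C^{r+2}(D)$ by elements of~$\cW$, each of which solves $Pu=0$ on~$\RR^n$ and obeys the stated decay, which proves the theorem. The main obstacle is purely analytic and independent of the approximation scheme: one must set up with enough precision the short-range stationary scattering theory for $P$ and~$P^*$ at the energy~$c_0$, namely the limiting absorption bounds $R^*_\pm\colon B\to B^*$, the distorted plane-wave expansion entering Stone's formula, and the Rellich-type uniqueness theorem for $\De+c_0+F(|x|)$ in the exterior region. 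These ingredients are classical for perturbations of this size but require careful bookkeeping in the Agmon--H\"ormander spaces~$B$ and~$B^*$; by contrast, the topological hypothesis on~$D$ enters the argument only through the unique continuation step.
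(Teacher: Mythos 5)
Your proposal is correct in its overall logic, but it follows a genuinely different route from the paper's own proof, and it is considerably heavier in its scattering-theoretic input. The paper argues constructively: it first applies Browder's theorem (Theorem~\ref{T.Browder}) to produce a global solution~$w$ close to~$v$ on a compact neighbourhood of~$D$; it then expands~$w$ in spherical harmonics and truncates, observing that for $|x|>R_0$ each radial coefficient $w_{mk}$ solves a perturbed radial ODE which, after conjugation by $r^{(n-1)/2}$, is an $L^1$-perturbation of $y''+c_0y=0$. The Dini--Hukuwara boundedness theorem then yields the pointwise decay $|w_{mk}(r)|\lesssim r^{-(n-1)/2}$, hence the truncation $\tw$ decays and solves $P\tw=0$ outside $B_{R_0}$ exactly and inside up to a small error $f=P\tw$. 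The correction $g:=(P+i0)^{-1}f$, furnished by the limiting absorption principle, lies in $B^*$ and yields the global solution $u=\tw-g$. In contrast, you run a Hahn--Banach duality argument requiring the full stationary scattering theory for both $P$ and~$P^*$ at energy~$c_0$: distorted plane waves $e_\xi$ and $e^*_\xi$, distorted Herglotz waves, Stone's formula for the spectral density, reciprocity between generalized eigenfunctions of $P$ and~$P^*$, and a Rellich-type uniqueness theorem for $\De+c_0+F(|x|)$. Each step is plausibly available for short-range perturbations of Helmholtz, but setting this machinery up rigorously for a general (non-self-adjoint, non-Schr\"odinger) essentially flat operator~$P$ is a substantial undertaking that the paper's proof sidesteps: the paper uses only the single resolvent estimate $(P+i0)^{-1}\colon B\to B^*$ together with classical ODE asymptotics and Browder's theorem. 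A few incidental points are worth flagging: (a) your argument implicitly requires $P^*$ to have regular enough coefficients for unique continuation and for the distorted-wave expansion, an assumption not needed by the constructive proof, which never touches~$P^*$; (b) the passage from vanishing of $\langle\mu,e_\xi\rangle$ (plane waves of~$P$) to the vanishing of the spectral density of~$P^*$ applied to~$\mu$ hides a reciprocity identity whose verification is nontrivial when $P\neq P^*$ inside $B_{R_0}$; and (c) the Hahn--Banach argument is more naturally set up in $C^0(D)$ with a bootstrap to $C^{r+2}(D)$ by interior elliptic estimates, rather than directly in $C^{r+2}(D)$ as you write. None of these are fatal, but they make your route markedly less self-contained than the one in the paper.
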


\begin{remark}
We recall~\cite{Acta} that if $P$ is the Helmholtz operator ($Pu:=\De u+ c_0u$ with
$c_0>0$), then the function decays pointwise as
\begin{equation}
|u(x,t)|\leq C(1+|x|)^{-\frac{n-1}2}\,.
\end{equation}
\end{remark}

\begin{proof}
Let us take a domain $D'$ containing the closed set~$D$. Without any loss of generality one can assume that the domain~$D'$ is
contained in the ball $B_{R_0}$. Theorem~\ref{T.Browder} ensures that for any $\de>0$ there exists a solution $w$ of the equation $Pw=0$ in the whole space $\RR^n$ such that
\begin{equation*}
\|w-v\|_{C^{r+2}(D')}<\de.
\end{equation*}

Let us now study the behavior of the functions~$w$ in a (one-sided)
neighborhood of~$\pd B_{2R_0}$. We start by expanding $w$ in an orthonormal
basis of spherical harmonics on the unit $(n-1)$-dimensional sphere,
which we denote by
\[
\{Y_{mk}(\om): m\geq 0\,,\; 1\leq k\leq d_m\}
\]
just as in Section~\ref{S.T2}. This leads to the expansion
\[
w(x)= \sum_{m=0}^\infty\sum_{k=1}^{d_m} w_{mk}(r)\, Y_{mk}(\om)\,,
\]
where
\[
w_{mk}(r):=\int_{\SS^{n-1}} w(r\om)\, Y_{mk}(\om)\, d\om\,.
\]
The sum converges in
$H^{r+2}(B_{2R_0})$, so one can take a large enough~$M$ to
ensure that
\begin{equation}\label{AconvM}
\| w-\tw\|_{H^{r+2}(B_{2R_0})} <{\de}\,,
\end{equation}
where $\de$ is a positive constant to be specified later and
\[
\tw:=\sum_{m=0}^{M} \sum_{k=1}^{d_m}w_{mk}(r)\,
Y_{mk}(\om)\,.
\]

Since
\[
Pw = \De w + [c_0+F(|x|)]\,w
\]
for $|x|>R_0$ by the definition of an essentially flat operator, it
automatically follows from the fact that $Y_{mk}(\om)$ are linearly
independent that the functions~$w_{mk}(r)$
satisfy the ODE
\begin{equation}\label{AODE1}
w_{mk}''+\frac{n-1}r w_{mk}' + \bigg( c_0 +F(r)-\frac{\mu_m}{r^2}\bigg) w_{mk}=0
\end{equation}
for $R_0<r<2R_0$, where we recall that
\[
\mu_m:= m(m+n-2)\,.
\]
This is a linear ODE whose coefficients are
bounded for $r\geq R_0$, so it is standard that $w_{mk}$ is indeed
global, that is, it can be extended as a function in
$C^2\loc((0,\infty))$ that satisfies~\eqref{AODE1} for $r>R_0$.

Our goal now is to show that $w_{mk}$ falls off at infinity as
\begin{equation}\label{Aboundvp}
|w_{mk}(r)|<C(1+r)^{-\frac{n-1}2}\,.
\end{equation}
To see this, we rewrite Equation \eqref{AODE1} as
\begin{equation}\label{AODE2}
(r^{\frac{n-1}{2}}w_{mk})''+\left(c_0-\frac{\mu_m+\frac14(n-1)(n-3)}{r^2}+F(r)\right)(r^{\frac{n-1}{2}}w_{mk})=0\,.
\end{equation}
As $c_0$ is positive, the solutions of  $y''+c_0
y=0$ (which are sines and cosines) are obviously bounded,
so
the Dini--Hukuwara theorem~\cite{Bellman} ensures that the solutions
of~\eqref{AODE2} are also bounded as
\[
\|r^{\frac{n-1}{2}}w_{mk}\|_{L^\infty((0,\infty))}<\infty
\]
provided that
\[
\int_{R_0}^\infty \bigg|\frac{\mu_m+\frac14(n-1)(n-3)}{r^2}-F(r)
\bigg|\, dr<\infty\,.
\]
Since $ F(r)<C(1+r)^{-1-\ep}$ by the definition of an essentially flat
elliptic operator, this condition
is always satisfied, so we infer the bound~\eqref{Aboundvp}. In
particular, this shows that the function $\tw$ is defined in all
of~$\RR^n$ and falls off at infinity as
\begin{equation}\label{Adecaytpsi}
|\tw(x)|<C(1+|x|)^{-\frac{n-1}2}\,.
\end{equation}

Let us now observe that, since
\[
P\tw =0
\]
outside $B_{R_0}$, the function
\[
f:= P\tw
\]
is supported in $B_{R_0}$ and bounded as
\[
\|f\|_{H^r(B_{R_0})}<{C\de}
\]
by  the
estimate~\eqref{AconvM}. Suitable resolvent estimates then show that the function
\begin{equation}\label{Agk}
g:=(P+i0)^{-1} f
\end{equation}
satisfies the equation
\[
Pg=f
\]
on $\RR^n$ and the sharp decay condition of
Agmon--H\"ormander:
\begin{equation}\label{AAH}
\bigg(\sup_{R>0}\frac1R\int_{B_R} g^2\, dx\bigg)^{\frac12}\leq C\de\,.
\end{equation}
We refer the reader to~\cite[Theorem 30.2.10]{Hormander} for the
meaning of $(P+i0)^{-1} $ and the associated estimates.

If we now define
\[
u(x):=\tw(x) -g \,,
\]
we infer that it satisfies the equation $Pu=0$ on $\RR^n$, falls off at infinity as
\[
\sup_{R>0}\frac1R\int_{B_R} u^2\, dx<\infty
\]
by the bounds~\eqref{Adecaytpsi} and~\eqref{AAH}, and is close to~$v$ in the sense that
\begin{equation*}
\|u-v\|_{L^2(D')}\leq
\|u-\tw\|_{{L^2 (B_{R_0})}}+\|w-\tw\|_{{L^2(B_{R_0})}} + \|v-w\|_{{L^2(D')}} <C \de
\end{equation*}
by~\eqref{AAH} and the definitions of $\tw$ and $w$. Furthermore, since $P(u-v)=0$ in~$D'$, standard
elliptic estimates yield the H\"older bound
\[
\|u-v\|_{C^{r+2}(D)}<C\de\,.
\]
The theorem then follows.
\end{proof}

\bibliographystyle{amsplain}

\begin{thebibliography}{99}\frenchspacing


\bibitem{AR}
R. Abraham and J. Robbin, {\em Transversal mappings and flows},
Benjamin, New York, 1967.


\bibitem{Bellman} R. Bellman,   The boundedness of solutions of linear
  differential equations, Duke Math. J. 14 (1947) 83--97.


\bibitem{Brasco}
L. Brasco, R. Magnanini, P. Salani, The location of the hot spot in a grounded convex conductor, Indiana Univ. Math. J. 60 (2011) 633--659.

\bibitem{Browder}
F.E. Browder, Approximation by solutions of partial differential
equations, Amer. J. Math. 84 (1962) 134--160.


\bibitem{Werner}
K. Burdzy, W. Werner, A counterexample to the ``hot spots''
conjecture, Ann. of Math. 149 (1999) 309--317.

\bibitem{Canzani}
Y. Canzani, P. Sarnak, Topology and nesting of the zero set components
of monochromatic random waves, Comm. Pure Appl. Math., in press.

\bibitem{Chavel}
I. Chavel, L. Karp, Movement of hot spots in Riemannian manifolds,
J. Anal. Math. 55 (1990) 271--286.

\bibitem{Diaz1980} R. Diaz, A Runge theorem for solutions of the heat
  equation, Proc. Amer. Math. Soc. 80 (1980) 643--646.


\bibitem{JEMS}
A. Enciso, D. Hartley, D. Peralta-Salas, A problem of Berry and
knotted zeros in the eigenfunctions of the harmonic oscillator,
J. Eur. Math. Soc. 20 (2018) 301--314.

\bibitem{RMI}
A. Enciso, D. Hartley, D. Peralta-Salas, Dislocations of arbitrary topology in Coulomb
  eigenfunctions, Rev. Mat. Iberoam. 34 (2018) 1361--1371.


\bibitem{NS}
A. Enciso, R. Luc\`a, D. Peralta-Salas, Vortex
reconnection in the three dimensional Navier--Stokes
equations, Adv. Math. 309 (2017) 452--486.

\bibitem{Annals}
  {A. Enciso}, {D. Peralta-Salas}, Knots and links in steady
  solutions of the Euler equation, Ann. of Math. 175 (2012) 345--367.


\bibitem{Adv}
A. Enciso, D. Peralta-Salas, Submanifolds that are level sets of
solutions to a second-order elliptic PDE, Adv. Math. 249 (2013)
204--249.

\bibitem{Acta} A. Enciso, D. Peralta-Salas, {Existence of knotted vortex tubes in
steady Euler flows},  {Acta Math.} 214 (2015) 61--134.


\bibitem{APDE}
A. Enciso, D. Peralta-Salas,  Bounded solutions to the Allen--Cahn
equation with level sets of any compact topology, Anal. $\&$ PDE 9 (2016)
1433--1446.

\bibitem{ASENS}
A. Enciso, D. Peralta-Salas,  F. Torres de Lizaur, Knotted
  structures in high-energy Beltrami fields on the torus and the
  sphere, Ann. Sci. \'Ec. Norm. Sup. 50 (2017) 995--1016.

\bibitem{Escauriaza}
L. Escauriaza, Bounds for the fundamental solution of elliptic and
parabolic equations in nondivergence form, Comm. Partial Differential
Equations 25 (2000) 821--845.

\bibitem{Friedman}
A. Friedman, {\em Partial differential equations of parabolic type},
Prentice-Hall, Englewood Cliffs, 1964.


\bibitem{Gauthier1}
  P.M. Gauthier, M. Goldstein, W.H. Ow, Uniform approximation on
  closed sets by harmonic functions with Newtonian singularities,
  J. London Math. Soc. 28 (1983) 71--82.


\bibitem{Gauthier2} P.M. Gauthier, N. Tarkhanov, Rational approximation
  and universality por a quasilinear parabolic equation,
  J. Contemp. Math. Anal. 43 (2008) 353--364.

\bibitem{Hirsch}
M.W. Hirsch, {\em Differential topology}, Springer, New York, 1994.

\bibitem{Hormander}
L. H\"ormander, The analysis of linear partial differential operators
IV, Springer, Berlin, 2009.


\bibitem{Ishige1}
K. Ishige, Movement of hot spots on the exterior domain of a
ball under the Neumann boundary condition, J. Differential Equations
212 (2005) 394--431.

\bibitem{Ishige2}
K. Ishige, Y. Kabeya, $L^p$ norms of nonnegative Schr\"odinger heat semigroup and the large time behavior of hot spots, J. Funct. Anal. 262 (2012) 2695--2733.


\bibitem{John}
F. John, {\em Partial differential equations}, Springer, New York, 1991.

\bibitem{Jones}
B.F. Jones, An approximation theorem of Runge
  type for the heat equation, Proc. Amer. Math. Soc. 52 (1975)
  289--292.

\bibitem{Lax}
  P.D. Lax, A stability theorem for solutions of abstract differential
  equations, and its application to the study of the local behavior of
  solutions of elliptic equations, Comm. Pure Appl. Math. 9 (1956)
  747--766.

\bibitem{Lions}
P.L. Lions, M. Musiela, Convexity of solutions of parabolic equations,
C.R. Acad. Sci. Paris 342 (2006) 915--921.


\bibitem{MS2}
R. Magnanini, S. Sakaguchi, Matzoh ball soup: heat conductors with a
stationary isothermic surface, Ann. of Math. 156 (2002) 931--946.

\bibitem{MS1}
R. Magnanini, J. Prajapat, S. Sakaguchi,
Stationary isothermic surfaces and uniformly dense domains, Trans. Amer. Math. Soc. 358 (2006) 4821--4841.


\bibitem{Malgrange}
B.  Malgrange, Existence et approximation des solutions des \'equations aux d\'eriv\'ees partielles et des \'equations de convolution, Ann. Inst. Fourier 6 (1955--1956) 271--355.



\bibitem{Sakaguchi}
S. Sakaguchi, Stationary critical points of the heat flow in spaces of constant curvature, J. London Math. Soc. 63 (2001) 400--412.






\bibitem{Tataru}
D. Tataru, {\em Carleman estimates, unique continuation and
  applications}, https://math.berkeley.edu/~tataru/papers/ucpnotes.ps.

\bibitem{Verdera}
J. Verdera, $C^m$ approximation by solutions of elliptic equations and Calder\'on--Zygmund operators,
Duke Math. J. 55 (1987) 157--187.

\bibitem{Weinstock}
B.M. Weinstock, Uniform approximation by solutions of elliptic equations, Proc. Amer. Math. Soc. 41 (1973) 513--517.
\end{thebibliography}

\end{document}